\documentclass[a4paper,11pt,english]{smfart}
\usepackage[applemac]{inputenc}
\usepackage[francais,english]{babel}
\usepackage{amsfonts}
\usepackage{amsmath} 
\usepackage{amsthm}
\usepackage{hyperref} 
\usepackage{latexsym}
\usepackage{array}
\usepackage{mathrsfs}
\usepackage{amssymb}
\usepackage{enumerate}
\usepackage{smfthm}
\usepackage{graphicx}
\usepackage{color}
\textwidth= 17cm
\textheight = 20cm
\hoffset =-1.5cm

\newcommand{\ligne}{\vspace{1\baselineskip}}
\newcommand{\ph}{\phantomsection}
\newcommand{\cal}{\mathcal}

\newcommand{\R}{\mathbb  R}
\newcommand{\C}{\mathbb  C}
\newcommand{\N}{\mathbb  N}

\renewcommand{\P}{\mathbb{P}}

\newcommand{\W}{  \mathcal{W}   }

\newcommand{\M}{  \mathcal{M}  }
\newcommand{\eps}{\varepsilon}
\renewcommand{\epsilon}{\varepsilon}
\newcommand{\e}{  \text{e}   }
\newcommand{\wt}{  \widetilde   }

\renewcommand{\H}{  \mathcal{H}   }
\newcommand{\A}{  \mathcal{A}   }

\newcommand{\T}{  \S^{1} }

\newcommand{\dis}{\displaystyle}

\newcommand{\om}{  \omega   }
\newcommand{\ov}{  \overline  }
\renewcommand{\a}{  \alpha   }
\renewcommand{\b}{  \beta   }
\newcommand{\s}{  \sigma   }

\renewcommand{\phi}{  \varphi  }

\renewcommand{\>}{  \rangle   }

\renewcommand{\S}{  \mathbb{S}  }

\numberwithin{equation}{section}
\theoremstyle{plain}

\newtheorem*{acknowledgements}{Acknowledgements}

\overfullrule=0pt
 
\pagestyle{myheadings}

\def\beq{\begin{equation}}   \def\eeq{\end{equation}}
\def\bea{\begin{eqnarray}}  \def\eea{\end{eqnarray}}

\renewcommand{\theequation}{\thesection.\arabic{equation}}
\newcounter{hran} \renewcommand{\thehran}{\thesection.\arabic{hran}}

\def\bmini{\setcounter{hran}{\value{equation}}
    \refstepcounter{hran}\setcounter{equation}{0}
    \renewcommand{\theequation}{\thehran\alph{equation}}\begin{eqnarray}}

\def\bminiG#1{\setcounter{hran}{\value{equation}}
\refstepcounter{hran}\setcounter{equation}{-1}
\renewcommand{\theequation}{\thehran\alph{equation}}
\refstepcounter{equation}\label{#1}\begin{eqnarray}}

\author{ Aur\'elien Poiret}
\address{Laboratoire de Math\'ematiques, UMR 8628 du CNRS.
Universit\'e Paris Sud, 91405 Orsay Cedex, France}
\email{aurelien.poiret@math.u-psud.fr}
%\urladdr{}
\author{ Didier Robert}
\address{Laboratoire de Math\'ematiques J. Leray, UMR  6629 du CNRS, Universit\'e de Nantes, 
2, rue de la Houssini\`ere,
44322 Nantes Cedex 03, France}
\email{didier.robert@univ-nantes.fr}
\author{ Laurent Thomann }
\address{Laboratoire de Math\'ematiques J. Leray, UMR  6629 du CNRS, Universit\'e de Nantes, 
2, rue de la Houssini\`ere,
44322 Nantes Cedex 03, France}
\email{laurent.thomann@univ-nantes.fr}

 \title[Probabilistic global well-posedness for NLS]{Probabilistic global well-posedness for the supercritical nonlinear harmonic oscillator}
 
\begin{document}
\frontmatter

 \begin{abstract}
Thanks to an approach inspired from  Burq-Lebeau \cite{bule}, we prove stochastic versions of Strichartz estimates for Schr\"odinger with harmonic potential. As a consequence, we show that the nonlinear Schr\"odinger equation with quadratic potential  and any polynomial non-linearity  is almost surely locally well-posed in $L^{2}(\R^{d})$ for any $d\geq 2$. Then, we show that we can combine this result with the high-low frequency decomposition method of Bourgain to prove      a.s.  global well-posedness results for the cubic equation: when  $d=2$, we prove global well-posedness in $\H^{s}(\R^{2})$ for any $s>0$, and when $d=3$ we  prove global well-posedness in $\H^{s}(\R^{3})$ for any $s>1/6$, which is a supercritical regime. 

Furthermore, we also obtain almost sure global well-posedness results with scattering  for  NLS on $\R^{d}$ without  potential. We prove scattering results  for $L^2-$supercritical equations and $L^2-$subcritical equations with  initial conditions in $L^2$ without additional decay or regularity assumption.
  \end{abstract}
\subjclass{35Q55 ; 35R60 ;  35P05}
\keywords{ Harmonic oscillator, supercritical non-linear Schr\"odinger equation, random initial conditions, scattering, global solutions.}
\thanks{\noindent D. R.  was partly supported  by the  grant  ``NOSEVOL''   ANR-2011-BS01019 01.\\
L.T. was partly supported  by the  grant  ``HANDDY'' ANR-10-JCJC 0109 \\
 \indent \quad and by the  grant  ``ANA\'E'' ANR-13-BS01-0010-03.}
\maketitle
\mainmatter

 \section{Introduction and results}
 
  \subsection{Introduction}
  It is known from  several works  that a probabilistic   approach can  help to give insight in dynamics of dispersive non linear  PDEs, even for low  Sobolev regularity. This point of view was initiated by Lebowitz-Rose-Speer  \cite{LeRoSp},  developed by   Bourgain \cite{Bourgain1,Bourgain2} and Zhidkov
\cite{Zhidkov}, and enhanced by Tzvetkov~\cite{Tzvetkov1,Tzvetkov2,Tzvetkov3},
Burq-Tzvetkov \cite{BT2,BT3}, Oh \cite{Oh1,Oh2}, Colliander-Oh \cite{CO} and others.  In this paper we study the Cauchy problem for the nonlinear  Schr\"odinger-Gross-Pitaevskii equation
 \begin{equation} \label{SH} 
  \left\{
      \begin{aligned}
         & i \frac{ \partial u }{ \partial t } +\Delta u-|x|^{2}u = \pm |u|^{p-1} u, \quad (t,x)\in \R\times \R^{d},
       \\  &  u(0)  =u_0,
      \end{aligned}
    \right.
\end{equation}
with  $d\geq 2$, $ p \geq 3 $  an odd integer and where $u_{0}$ is a random initial condition.

Much work has been done on dispersive PDEs with random initial conditions since the papers of Burq-Tzvetkov \cite{BT2,BT3}. In these articles, the authors showed that thanks to a randomisation of the initial condition one can prove well-posedness results even for data with supercritical Sobolev regularity. We also refer to   \cite{BTproba}, Thomann \cite{tho}, Burq-Thomann-Tzvetkov \cite{BTT}, Poiret \cite{poiret1,poiret2}, Suzzoni~\cite{deSuzzoni}  and Nahmod-Staffilani \cite{NaSt} for strong solutions in a probabilistic sense. Concerning  weak solutions, see~\cite{BTT2,BTT3} as well as
Nahmod-Pavlovic-Staffilani~\cite{NPS}. 

More recently, Burq-Lebeau~\cite{bule}  considered a different randomisation method, and thanks to fine spectral estimates, they obtained better stochastic bounds which enabled them to improve the previous known results for the supercritical wave equation on a compact manifold. In \cite{PRT1} we extended the results of~\cite{bule} to the harmonic oscillator in $\R^{d}$. This approach enables to prove a stochastic version of the usual Strichartz estimates with a gain of $d/2$ derivatives, which we will use here to apply to the nonlinear problem. These estimates (the result of Proposition \ref{regularite}) can be seen as a consequence of~\cite[Inequality~(1.6)]{PRT1}, but we give here an alternative proof suggested by Nicolas Burq. \ligne

Consider a  probability space
$(\Omega, {\cal F}, { \P})$ and let  $\{g_n\}_{n\geq 0}$  be a sequence of real random variables, which we will assume to be independent and identically distributed.   We assume that   the common law $\nu$ of~$g_{n}$ satisfies for some   $c>0$ the bound
\beq\label{expo}
\int_{-\infty}^{+\infty}{\rm e}^{\gamma x}\,\text{d}\nu \leq {\rm e}^{c\gamma^2},\;\;\;\;\forall \gamma\in\R.
\eeq
This condition implies in particular that the  $g_n$ are centred variables. It is easy to check that \eqref{expo}  is satisfied for centred Gauss laws  and for any centred  law   with bounded support. Under condition~\eqref{expo}, we can prove the Khinchin inequality (Lemma \ref{Khin}) which we will use in the sequel.\ligne

   Let $d\geq2$.  We denote by 
     $$H=-\Delta +|x|^{2},$$
     the harmonic oscillator and by $\{\phi_{j},\;j\geq 1\}$ an orthonormal basis of $L^{2}(\R^{d})$ of eigenvectors  of~$H$ (the Hermite functions).   The eigenvalues of $H$ are the $\big\{2(\ell_{1}+\dots+\ell_{d})+d,\; \ell\in \N^{d}\big\}$, and we can order them in a  non decreasing sequence $\{\lambda_j, \;j \geq1\}$, repeated according to their multiplicities, and so that  $H \phi_{j} =\lambda_j \phi_{j}$.   
     
   We define the  harmonic Sobolev spaces for $s\geq 0$, $p\geq 1$ by 
 \begin{equation*} 
         \W^{s, p}= \W^{s, p}(\R^d) = \big\{ u\in L^p(\R^d),\; {H}^{s/2}u\in L^p(\R^d)\big\},     
       \end{equation*}
       \begin{equation*}
           \H^{s}=   {\cal H}^{s}(\R^d) = \W^{s, 2}.
       \end{equation*}
             The natural norms are denoted by $\Vert u\Vert_{\W^{s,p}}$ and up to equivalence of norms  (see \cite[Lemma~2.4]{YajimaZhang2}), for $1<p<+\infty$,  we have
        $$
      \Vert u\Vert_{\W^{s,p}} = \Vert  H^{s/2}u\Vert_{L^{p}} \equiv \Vert (-\Delta)^{s/2} u\Vert_{L^{p}} + 
       \Vert\<x\>^{s}u\Vert_{L^{p}}.
       $$
For $j\geq 1$ denote by 
     \begin{equation*} 
     I(j)=\big\{n\in \N,\;2j\leq \lambda_{n}< 2(j+1)\big\}.
     \end{equation*}
     Observe that for all $j \geq d/2$, $I(j)\neq \emptyset$ and that $\#I(j)\sim c_{d}j^{d-1}$ when $j \longrightarrow +\infty$.\ligne

    Let $s\in \R$, then any $u\in \H^{s}(\R^{d})$ can be written in a unique fashion 
   $$u=\sum_{j=1}^{+\infty} \sum_{n\in I(j)}c_{n}\phi_{n}.$$
Following a suggestion of Nicolas Burq, we introduce the following condition  
    \begin{equation}\label{condi0} 
  |c_{k}|^{2}\leq \frac{C}{\# I(j)}\sum_{n\in I(j)}|c_{n}|^{2},\quad \forall k\in I(j),\quad \forall  j\geq 1,
  \end{equation}
  which  means that the coefficients have almost the same size on each level of energy $I(j)$.   Observe that this condition is always satisfied in dimension $d=1$. We define the set $\mathcal{A}_{s}\subset \H^{s}(\R^{d})$ by     
      \begin{equation*}
      \mathcal{A}_{s}=\big\{u=\sum_{j=1}^{+\infty} \sum_{n\in I(j)}c_{n}\phi_{n}\in \H^{s}(\R^{d})\;\;    \text{s.t. condition \eqref{condi0} holds for some}\; C>0\big\}.
      \end{equation*}

      It is easy to check the following properties \vspace{5pt}
      
      \begin{itemize}
      \item[$\bullet$] Let $u\in \A_{s}$, then  for all $c\in \C$, $c u \in \A_{s}$.
      \item[$\bullet$] The set $\A_{s}$ is neither closed nor open in $\H^{s}$.
      \item[$\bullet$] The set $\A_{s}$ is invariant under the linear Schr\"odinger flow $\e^{-itH}$.
       \item[$\bullet$] The set $\A_{s}$ depends on the choice of the orthonormal basis $(\phi_{n})_{n\geq 1}$. Indeed, given $u\in \H^{s}$, it is easy to see that there exists a Hilbertian basis $(\tilde{\phi}_{n})_{n\geq 1}$ so that $u\in \tilde{\A}_{s}$, where $\tilde{\A}_{s}$ is  the  space based on~$(\tilde{\phi}_{n})_{n\geq 1}$.
      \end{itemize}
~

 Let $\gamma \in \A_{s}$. We define the probability measure $\mu_{\gamma}$ on $\H^{s}$ via the map 
 \begin{equation*}
 \begin{array}{rcl}
\Omega&\longrightarrow&\H^{s}(\R^{d})\\[3pt]
\dis  \omega&\longmapsto &\dis\gamma^{\om}=\sum_{j=1}^{+\infty} \sum_{n\in I(j)}c_{n}g_{n}(\om)\phi_{n},
 \end{array}
 \end{equation*}
in other words, $\mu_{\gamma}$ is defined by: for all measurable $F: \H^{s}\longrightarrow \R$
 \begin{equation*}
 \int_{\H^{s}(\R^{d})}F(v)\text{d}\mu_{\gamma}(v)=\int_{\Omega}F(\gamma^{\om})\text{d}\P(\om).
 \end{equation*}
 In particular, we   can check that $\mu_{\gamma}$ satisfies \vspace{5pt}
      
      \begin{itemize}
      \item[$\bullet$] If $\gamma \in \H^{s}\backslash \H^{s+\eps}$, then $\mu_{\gamma}(\H^{s+\eps})=0$.
      \item[$\bullet$] Assume that for all $j\geq 1$ such that $I(j)\neq \emptyset$ we have $c_{j}\neq 0$. Then for all nonempty open subset $B\subset \H^{s}$, $\mu_{\gamma}(B)>0$.
      \end{itemize}
      ~
      
  Finally, we denote by $\M^{s}$    the set of all such measures
  \begin{equation*}
  \M^{s}=\bigcup_{\gamma \in \A_{s}}\{\mu_{\gamma}\}.
  \end{equation*}

  \subsection{Main results}  Before we state our results, let us recall some facts concerning the deterministic study of the nonlinear Schr\"odinger equation \eqref{SH}. We say that \eqref{SH} is locally well-posed in $\H^{s}(\R^{d})$, if for all initial condition $u_{0}\in \H^{s}(\R^{d})$, there exists a unique local in time solution $u\in \mathcal{C}([-T,T];\H^{s}(\R^{d}))$, and if the flow-map is uniformly continuous. We denote by 
  \begin{equation*}
  s_{c}=\frac{d}2-\frac{2}{p-1},
  \end{equation*}
   the critical Sobolev index. Then one can show that NLS  is well-posed in $\H^{s}(\R^{d})$ when $s>\max(s_{c},0)$, and ill-posed when $s<s_{c}$. We refer to the introduction of \cite{tho} for more details on  this topic. 
 
 \subsubsection{\bf Local existence results}
We are now able to state our first result on the local well-posedness of \eqref{SH}.

 \begin{theo}\ph \label{letheoreme}
Let   $ d \geq 2 $, $ p \geq 3$ an odd integer and fix $ \mu=\mu_{\gamma}\in \M^{0}$. Then there exists $\Sigma\subset L^{2}(\R^{d})$ with $\mu(\Sigma)=1$ and so that:
\begin{enumerate}[(i)]
\item For all $u_{0}\in \Sigma$ there exist $T>0$ and  a unique local solution $u$ to \eqref{SH} with initial data $u_{0}$ satisfying
\begin{equation}\label{solu} 
u(t) - \e^{-itH} u_0  \in       \mathcal{C}\big([-T,T]; \H^{s}(\R^{d})\big),
\end{equation}
for some $\frac{d}{2}-\frac{2}{p-1}<s<\frac{d}{2}$.
 \item More precisely, for all $T>0$, there exists $\Sigma_{T}\subset \Sigma$ with 
 \begin{equation*} 
 \mu(\Sigma_{T})\geq 1-C\exp\big(-cT^{-\delta}\|\gamma\|^{-2}_{L^{2}(\R^{2})}\big),\quad C,c,\delta > 0,
 \end{equation*}
 and such that for all $u_{0}\in \Sigma_{T}$ the lifespan of $u$ is larger than $T$.
 \end{enumerate}
\end{theo}

Denote by $\dis \gamma=\sum_{n=0}^{+\infty} c_{n}\phi_{n}(x)$, then $\dis u^{\om}_{0}:=\sum_{n=0}^{+\infty} g_{n}(\om)c_{n}\phi_{n}(x)$ is a typical element in the support of $\mu_{\gamma}$.  Another way to state  Theorem \ref{letheoreme} is :  for any $ T>0 $, there exists an event $ \Omega_T \subset \Omega$ so that 
\begin{equation*}
{\P}( \Omega _T ) \geq 1-C\exp\big(-cT^{-\delta}\|\gamma\|^{-2}_{L^{2}(\R^{d})}\big),\quad C,c,\delta > 0,
\end{equation*}
and so that for all $ \omega \in \Omega _T $, there exists a unique solution  of the form \eqref{solu} to (\ref{SH}) with initial data~$ u_0^\omega $.\ligne

We will see in Proposition \ref{regularite}  that the stochastic approach yields a gain of $d/2$ derivatives compared to the deterministic theory. To prove       Theorem \ref{letheoreme} we only have to gain  $s_{c}=d/2-2/(p-1)$ derivatives. The solution is constructed by a fixed point argument in a Strichartz space $X^s_{T} \subset \mathcal{C}\big([-T,T];\H^{s}(\R^{d})\big)$ with continuous embedding, and uniqueness holds in the class $X^{s}_{T}$. \ligne
 
 The deterministic Cauchy problem for \eqref{SH} was studied by Oh \cite{Oh89} (see also  Cazenave~\cite[Chapter~9]{Caze} for more references).
In~\cite{tho}, Thomann has proven an almost sure local existence result for~\eqref{SH} in the supercritical regime (with a gain of 1/4 of derivative), for any $d\geq 1$. This local existence result was improved by Burq-Thomann-Tzvetkov~\cite{BTT}  when $d=1$  (gain of 1/2 derivatives), by Deng~\cite{Deng} when $d=2$, and by Poiret~\cite{poiret1,poiret2} in any dimension.
\begin{rema}
The results of Theorem \ref{letheoreme} also hold true for any quadratic potential 
$$V(x)=\sum_{1\leq j\leq d}\alpha_jx_j^2,\quad
\alpha_j>0,\quad 1\leq j\leq d,
$$
and for more general potentials such that $V(x)\approx \<x\>^{2}$.
\end{rema}
 \subsubsection{\bf Global existence and scattering results for NLS} As an application of the results of the previous part, we are able to construct global solutions  to the non-linear Schr\"odinger equation without potential, which scatter when $t\to \pm \infty$. Consider the following equation
 \begin{equation} \label{NLS} 
  \left\{
      \begin{aligned}
         & i \frac{ \partial u }{ \partial t } +\Delta u = \pm  |u|^{p-1} u, \quad (t,x)\in \R\times \R^{d}.
       \\  &  u(0)  =u_0.
      \end{aligned}
    \right.
\end{equation}
The well-posedness indexes for this equation are the same as for equation \eqref{SH}. Namely, \eqref{NLS}  is well-posed in $H^{s}(\R^{d})$ when $s>\max(s_{c},0)$, and ill-posed when $s<s_{c}$.

For the next result, we will need an additional condition on the law $\nu$. We assume that 
\begin{equation}\label{petit}
\P\big(|g_{n}|<\rho \big)>0,\quad \forall\,\rho >0,
\end{equation}
which ensure that the r.v. can take arbitrarily small values.  Then we can prove
\begin{theo}\ph\label{Thm12}
Let   $ d \geq 2 $, $ p \geq 3$ an odd integer and fix $ \mu=\mu_{\gamma}\in \M^{0}$. Assume that \eqref{petit} holds. Then there exists $\Sigma\subset L^{2}(\R^{d})$ with $\mu(\Sigma)>0$ and so that:
\begin{enumerate}[(i)]
\item For all $u_{0}\in \Sigma$ there exists a unique global solution $u$ to \eqref{NLS} with initial data $u_{0}$ satisfying
\begin{equation*} 
u(t) - \e^{it\Delta} u_0  \in       \mathcal{C}\big(\R; \H^{s}(\R^{d})\big),
\end{equation*}
for some $\frac{d}{2}-\frac{2}{p-1}<s<\frac{d}{2}$.
 \item For all $u_{0}\in \Sigma$ there exist states $f_{+},f_{-}\in \H^{s}(\R^{d})$ so that when $t\longrightarrow \pm \infty$
  \begin{equation*}
\big\|u(t) - \e^{it\Delta}( u_0+f_{\pm})\big\|_{H^{s}(\R^{d})}\longrightarrow 0.
 \end{equation*}
 \item If we assume that the distribution of $\nu$ is  symmetric, then
 $$\mu\Big(u_{0}\in L^{2}(\R^{d}) : {\text the \;assertion\; {\it (ii)}\; holds\; true}\; \Big|   \;   \|u_{0}\|_{L^{2}(\R^{d})}\leq \eta \Big)\longrightarrow 1,$$ 
 when $\eta \longrightarrow 0$.
 \end{enumerate}
\end{theo}
We can show   \cite[Théorème 20]{poiret1}, that for all $ s> 0 $,  if $ u_0 \notin \H^\s(\R^d) $ then $\mu(H^{\s}(\R^{d}))=0$.
This  shows that the randomisation does not yield a gain of derivative in the Sobolev scale;  thus Theorem~\ref{Thm12}  gives results for initial conditions
which are not covered by the deterministic theory.\ligne

There is a large litterature for the deterministic local and global theory with scattering for \eqref{NLS}. We refer to \cite{BaCaSt, NaOz,Carles2} for such results and more  references.\ligne

 We do not give here the details of the proof of Theorem \ref{Thm12}, since one can follow the main lines of the argument of Poiret   \cite{poiret1, poiret2} but with a different numerology (see {\it e.g. \cite[Théorème 4]{poiret2}}). The proof of $(i)$ and $(ii)$ is based on the use of an explicit transform, called the lens  transform $\mathscr{L}$, which links the solutions to~\eqref{NLS} to solutions of NLS with harmonic potential. 
 The transform $ \mathscr{L} $ has been used in different contexts: see Carles \cite{Carles2} for scattering results and more references.   
 More precisely, for  $ \dis u(t,x):  ] - \frac{\pi}{4} ; \frac{\pi}{4}  [  \times \R^d \longrightarrow \C$ we define
\begin{equation*}
v(t,x) = \mathscr{L}u(t,x)= \left( \frac{1}{\sqrt{1+4t^2}}  \right) ^{d/2}   u \Big( \frac{\arctan(2t)}{2}  , \frac{x}{\sqrt{1+4t^2} } \Big)     \e^{ \frac{ix^2t}{1+4t^2}  },
\end{equation*}
then $u$ is solution to
 $ \dis i \frac{ \partial u }{ \partial t } -H u  =   \lambda \cos(2t)^{  \frac{d}{2}(p-1) -2 } | u|^{p-1} u $ if and only if $v$ satisfies $ \dis i \frac{ \partial v}{ \partial t } + \Delta v   = \lambda | v|^{p-1} v$.
  Theorem \ref{letheoreme} provides  solutions  with lifespan larger than $\pi/4$  for large probabilities, provided that  the  initial conditions are small enough.

 The point  $(iii)$ is stated in  \cite[Théorème 9]{poiret1}, and can be understood as a small data result.
  \ligne 
 
 In Theorem \ref{Thm12} we assumed that $d\geq 2$ and $p\geq 3$ was an odd integer, so we had $p\geq 1+4/d$, or in other words we were in a $L^{2}$-supercritical setting. Our approach also allows to get results in an $L^{2}$-subcritical context, i.e. when $1+2/d<p< 1+4/d$.

 \begin{theo}\ph\label{Thm14}
Let $d=2$ and $2<p<3$. Assume that \eqref{petit} holds and fix $ \mu=\mu_{\gamma}\in \M^{0}$.  Then there exists $\Sigma\subset L^{2}(\R^{2})$ with $\mu(\Sigma)>0$ and so that for all $0<\eps<1$
\begin{enumerate}[(i)]
\item For all $u_{0}\in \Sigma$ there exists a unique global solution $u$ to \eqref{NLS} with initial data $u_{0}$ satisfying
\begin{equation*} 
u(t) - \e^{it\Delta} u_0  \in       \mathcal{C}\big(\R; \H^{1-\eps}(\R^{2})\big).
\end{equation*}
 \item For all $u_{0}\in \Sigma$ there exist states $f_{+},f_{-}\in \H^{1-\eps}(\R^{2})$ so that when $t\longrightarrow \pm \infty$
  \begin{equation*}
\big\|u(t) - \e^{it\Delta}( u_0+f_{\pm})\big\|_{H^{1-\eps}(\R^{2})}\longrightarrow 0.
 \end{equation*}
 \item If we assume that the distribution of $\nu$ is  symmetric, then
 $$\mu\Big(u_{0}\in L^{2}(\R^{2}) : {\text the \;assertion\; {\it (ii)}\; holds\; true}\; \Big|   \;   \|u_{0}\|_{L^{2}(\R^{2})}\leq \eta \Big)\longrightarrow 1,$$ 
 when $\eta \longrightarrow 0$.
 \end{enumerate}
\end{theo}

In the case $p\leq1+2/d$, Barab \cite{Barab} showed that a non trivial solution to \eqref{NLS} never scatters, therefore even with a stochastic approach one can not have scattering in this case. When $d=2$, the condition~$p>2$ in Theorem \ref{Thm14} is therefore optimal.
Usually, deterministic scattering results in  $L^2$-subcritical contexts are obtained in the space $H^1 \cap \mathcal{F}(H^1)$. Here we assume $u_0\in L^2$, and thus   we relax both the regularity and the decay assumptions (this latter point is the most striking in this context). Again we refer to \cite{BaCaSt} for an overview of scattering theory for NLS.\medskip

When $\mu\in \M^{\s}$ for some $0<\s<1$ we are able to prove the same result with $\eps=0$. Since the proof is much easier, we give it before the case $\s=0$ (see Section \ref{sect32}).\medskip

Finally, we point out that in Theorem \ref{Thm14} we are only able to consider the case $d=2$ because of the lack of regularity of the nonlinear term $|u|^{p-1}u$.

  \subsubsection{\bf Global existence results for NLS with quadratic potential}

We also get global existence results for defocusing Schr\"odinger equation with harmonic potential. For $ d=2 $ or $ d=3 $, consider the  equation 
\begin{equation} \label{NLS3D}  
  \left\{
      \begin{aligned}
         & i \frac{ \partial u }{ \partial t } - H u =   |u|^{2} u, \quad (t,x)\in \R\times \R^{d},
       \\  &  u(0)  =u_0,
      \end{aligned}
    \right.
\end{equation}
and denote by $E$ the energy of \eqref{NLS3D}, namely
\begin{equation*}
E(u)=\|  u\|^{2}_{\H^{1}(\R^{d})}+\frac12 \|u\|^{4}_{L^{4}(\R^{d})}.
\end{equation*}
Deterministic global existence for \eqref{NLS3D} has been studied by Zhang \cite{Zhang05} and  by Carles \cite{Car11} in  the case of time-dependent potentials.\ligne 

When $d=3$, our global existence  result for \eqref{NLS3D} is the following

\begin{theo}\ph\label{theo3D}
Let $ d=3 $, $1/6<s< 1$ and fix $\mu=\mu_{\gamma} \in \mathcal{M}^{s}$. Then there exists a set $\Sigma\subset \H^{s}(\R^{3})$ so that $\mu(\Sigma)=1$ and so that the following holds true
\begin{enumerate}[(i)]
\item For all $u_{0}\in \Sigma$, there exists a unique global solution to \eqref{NLS3D}   which reads
\begin{equation*}
u(t)=\e^{-itH}u_{0}+w(t),\quad w\in \mathcal{C}\big(\R,\H^{1}(\R^{3})\big).
\end{equation*}
\item The previous line defines a global flow $\Phi$, which leaves the set $\Sigma$ invariant
\begin{equation*}
\Phi(t)(\Sigma)=\Sigma, \quad \text{for all}\;\; t\in \R.
\end{equation*}
\item There exist $C,c_{s}>0$ so that for all $t\in \R$, 
\begin{equation*}
E\big(w(t)\big)\leq C(M+|t|)^{c_{s}+},
\end{equation*}
where $M$ is a positive random variable so that 
$$\mu(u_{0}\in \H^{s}(\R^{3})\,: M>K)\leq C\e^{-\frac{cK^{\delta}}{\|\gamma\|^{2}_{\H^{s}(\R^{3})}}}.$$
\end{enumerate}
\end{theo}

Here the critical Sobolev space is $\H^{1/2}(\R^3)$, thus the local deterministic theory combined with the conservation of the energy, immediately gives global well-posedness in $\H^{1}(\R^{3})$. Using a kind of interpolation method due to Bourgain, one may obtain deterministic global well-posedness in $\H^{s}(\R^{3})$ for some $1/2<s<1$. Instead, for the proof of Theorem \ref{theo3D}, we will rely on the almost well-posedness result of Theorem \ref{letheoreme}, and this gives  global well-posedness  in a supercritical context. \medskip

The constant $c_{s}>0$ can be computed explicitly (see \eqref{defcs}), and we do not think that we have obtained the optimal rate. By reversibility of the equation, it is enough to consider only positive times.\ligne

With a similar approach, in dimension $d=2$, we can prove the following result

\begin{theo}\ph\label{theo2D}
Let $ d=2 $, $0<s< 1$ and fix $\mu=\mu_{\gamma} \in \mathcal{M}^{s}$. Then there exists a set $\Sigma\subset \H^{s}(\R^{2})$ so that $\mu(\Sigma)=1$ and so that for  all $u_{0}\in \Sigma$, there exists a unique global solution to \eqref{NLS3D}   which reads
\begin{equation*}
u(t)=\e^{-itH}u_{0}+w(t),\quad w\in \mathcal{C}\big(\R,\H^{1}(\R^{2})\big).
\end{equation*}
In addition, statements $(ii)$ and $ (iii) $ of Theorem \ref{theo3D} are also satisfied with $c_{s}=\frac{1-s}s$.
\end{theo}

Here the critical Sobolev space is $L^2(\R^2)$, thus Theorem \ref{theo2D} shows global well-posedness for any subcritical cubic non linear Schr\"odinger equations in dimension two.\ligne

Using the smoothing effect which yields a gain of 1/2 derivative,  a global well-posedness result for~\eqref{SH}, in the defocusing case, was given in  \cite{BTT} in the case $d=1$, for any $p\geq 3$. The global existence is proved for a typical initial condition on the support of a Gibbs measure, which is $\dis \cap_{\s>0}\H^{-\s}(\R)$. This result was extended by Deng \cite{Deng} in dimension $d=2$ for radial functions. However, this approach has the drawback  that it relies on the invariance of a Gibbs measure, which is a rigid object, and is supported in  rough Sobolev spaces. Therefore it seems difficult to adapt this strategy in higher dimensions. 

Here instead we obtain the results of Theorems \ref{theo3D} and \ref{theo2D} as a combination of   Theorem \ref{letheoreme} with the high-low frequency decomposition method of Bourgain \cite[page 84]{Bou}. This approach has been successful in different contexts, and has been first used together with probabilistic arguments by Colliander-Oh~\cite{CO} for the cubic Schr\"odinger below $L^{2}(\S^{1})$ and later on by Burq-Tzvetkov \cite{BTproba} for the wave equation.

 \subsection{Notations and plan of the paper} 
 
 \begin{enonce*}{Notations}
 In this paper $c,C>0$ denote constants the value of which may change
from line to line. These constants will always be universal, or uniformly bounded with respect to the other parameters.\\ 
We will sometimes use the notations $L^{p}_{T}=L^{p}_{[-T,T]}=L^{p}(-T,T)$ for $T>0$ and we write $L^{p}_{x}=L^{p}(\R^{d})$. We denote by $H=-\Delta+|x|^{2}=\sum_{j=1}^{d}(-\partial^{2}_{j}+x^{2}_{j})$ the harmonic oscillator on $\R^{d}$, and   for $s\geq 0$ we define the   Sobolev space $\H^{s}$  by the norm  $\|u\|_{\H^{s}}=\|H^{s/2}u\|_{L^{2}(\R^{d})}$.  More generally, we define the spaces $\W^{s,p}$ by the norm $\|u\|_{\W^{s,p}}=\|H^{s/2}u\|_{L^{p}(\R^{d})}$.
 If $E$ is a Banach space and  $\mu$ is a measure on $E$, we write  $L^{p}_{\mu}=L^{p}(\text{d}\mu)$ and $\|u\|_{L^{p}_{\mu}E}=\big\|\|u\|_{E}\big\|_{L^{p}_{\mu}}$. 
\end{enonce*}

The rest of the paper is organised as follows. In Section \ref{Sect2} we recall some deterministic results on the spectral function, and  prove  stochastic Strichartz estimates. Section \ref{Sect3}  is devoted to the proof of  Theorem \ref{letheoreme} and of the scattering results for NLS without potential. Finally, in Section  \ref{Sect4} we study the global existence for the Schr\"odinger-Gross-Pitaevskii equation \eqref{SH}.    
 
\begin{acknowledgements}
We are grateful to Nicolas Burq for discussions on this subject. We thank R\'emi Carles for discussions on scattering theory which led  us  toward Theorem~\ref{Thm14}.
\end{acknowledgements}

%%%%%%%%%%%%%%%%%%%%%%%%%%%%%%%%
%%%%%%%%%%%%%%%%%%%%%%%%%%%%%%%%%

\section{Stochastic Strichartz estimates}\label{Sect2}

The main result of this section is the following probabilistic improvement of the Strichartz estimates.
                \begin{prop}\ph  \label{regularite}
   Let $s\in \R$ and $\mu=\mu_{\gamma}\in \M^{s}$. Let $1\leq q<+\infty$, $2\leq r\leq +\infty$, and set $\a=d(1/2-1/r)$ if $r<+\infty$ and $\a<d/2$ if $r=+\infty$. Then  there exist $c,C>0$ so that for all $\tau \in \R$
   \begin{equation*}
   \mu\big(u\in \H^{s}(\R^{d})\,:\;\; \big\|  \e^{-i(t+\tau)H}u  \big\|_{L^{q}_{[0,T]}\W^{s+\a,r}(\R^{d})}>K\big)\leq C\e^{-\frac{cK^{2}}{T^{2/q}\|\gamma\|^{2}_{\H^{s}(\R^{d})}}}.
   \end{equation*}
     \end{prop}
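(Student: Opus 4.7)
The plan is to follow the Khinchin-plus-spectral-projector strategy. Expanding $\gamma=\sum_{j\geq 1}\sum_{n\in I(j)}c_{n}\phi_{n}\in \A_{s}$, the object of interest reads
\[
H^{(s+\alpha)/2}\e^{-i(t+\tau)H}\gamma^{\om}(x)=\sum_{j\geq 1}\sum_{n\in I(j)}\lambda_{n}^{(s+\alpha)/2}\e^{-i(t+\tau)\lambda_{n}}c_{n}g_{n}(\om)\phi_{n}(x).
\]
I would first fix $(t,x)$: since $|\e^{-i(t+\tau)\lambda_{n}}|=1$ the oscillatory phases are harmless, and Khinchin's inequality (Lemma~\ref{Khin}) yields, for every $p\geq 2$,
\[
\big\| H^{(s+\alpha)/2}\e^{-i(t+\tau)H}\gamma^{\om}(x)\big\|_{L^{p}_{\om}}\leq C\sqrt{p}\,\Big(\sum_{n}\lambda_{n}^{s+\alpha}|c_{n}|^{2}|\phi_{n}(x)|^{2}\Big)^{1/2}.
\]

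Next I would take the $L^{q}_{[0,T]}L^{r}_{x}$ norm and pull the $L^{p}_{\om}$ norm outside by Minkowski's integral inequality, valid as soon as $p\geq \max(q,r)$ (the endpoint $r=\infty$ is recovered separately by Sobolev embedding from some large but finite $r_{0}$, which is why the strict condition $\alpha<d/2$ appears there). Since the right-hand side of the Khinchin bound does not depend on $t$, the time variable only contributes a factor $T^{1/q}$, leaving a purely spatial estimate:
\[
\big\|\e^{-i(t+\tau)H}\gamma^{\om}\big\|_{L^{p}_{\om}L^{q}_{[0,T]}\W^{s+\alpha,r}_{x}}\leq C\sqrt{p}\,T^{1/q}\,\Big\|\Big(\sum_{n}\lambda_{n}^{s+\alpha}|c_{n}|^{2}|\phi_{n}|^{2}\Big)^{1/2}\Big\|_{L^{r}_{x}}.
\]
I would then group the sum by energy levels. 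Writing $C_{j}^{2}=\sum_{n\in I(j)}|c_{n}|^{2}$, using $\lambda_{n}\sim 2j$ on $n\in I(j)$, and invoking the equidistribution hypothesis \eqref{condi0}, one bounds the inner sum by the spectral function $e_{j}(x)=\sum_{n\in I(j)}|\phi_{n}(x)|^{2}$,
\[
\sum_{n}\lambda_{n}^{s+\alpha}|c_{n}|^{2}|\phi_{n}(x)|^{2}\lesssim \sum_{j\geq 1}\frac{j^{s+\alpha}C_{j}^{2}}{\#I(j)}\,e_{j}(x).
\]
The triangle inequality in $L^{r/2}_{x}$ (valid as $r\geq 2$) together with the spectral projector bound $\|e_{j}\|_{L^{r/2}_{x}}\lesssim j^{d-1-\alpha}$, a consequence of the spectral analysis carried out in \cite{PRT1}, then gives $j^{\alpha}\|e_{j}\|_{L^{r/2}}/\#I(j)=O(1)$ and hence
\[
\big\|\e^{-i(t+\tau)H}\gamma^{\om}\big\|_{L^{p}_{\om}L^{q}_{[0,T]}\W^{s+\alpha,r}_{x}}\leq C\sqrt{p}\,T^{1/q}\,\|\gamma\|_{\H^{s}},
\]
uniformly in $\tau\in\R$.

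Finally, I would convert this moment estimate into the announced sub-Gaussian tail by Markov's inequality: for $p\geq \max(q,r)$,
\[
\mu\big(\|\e^{-i(t+\tau)H}u\|_{L^{q}_{[0,T]}\W^{s+\alpha,r}}>K\big)\leq K^{-p}\big(C\sqrt{p}\,T^{1/q}\,\|\gamma\|_{\H^{s}}\big)^{p},
\]
and optimizing in $p\sim cK^{2}/(T^{2/q}\|\gamma\|_{\H^{s}}^{2})$ yields the bound $C\e^{-cK^{2}/(T^{2/q}\|\gamma\|_{\H^{s}}^{2})}$; the constraint $p\geq \max(q,r)$ is absorbed into the constant $C$ for small values of $K$. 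The delicate point, and the reason why condition \eqref{condi0} is essential, is the spectral projector bound on $\|e_{j}\|_{L^{r/2}}$: without equidistribution, one would be forced to use the much coarser individual $L^{r}$ bounds on the eigenfunctions $\phi_{n}$, and the sharp power $\alpha=d(1/2-1/r)$ would be out of reach.
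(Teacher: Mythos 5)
Your proposal is correct and follows essentially the same route as the paper: Khinchin's inequality pointwise in $(t,x)$, Minkowski to swap the probability and spatial/temporal norms, the equidistribution hypothesis \eqref{condi0} combined with the Karadzhov-type spectral-window estimate (Lemma~\ref{lemme5}, $\|e_j\|_{L^{r/2}}\lesssim j^{d/2+d/r-1}$) to collapse each energy shell, and a Markov/optimization-in-$p$ step to produce the sub-Gaussian tail, with $r=\infty$ recovered by Sobolev embedding. The only difference is organizational — you pull out the $L^p_\omega$ norm in one Minkowski step for $p\ge\max(q,r)$ where the paper does it in two — which is immaterial.
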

When $r=+\infty$, this result expresses a gain $\mu-$a.s. of $d/2$ derivatives  in space compared to the deterministic Strichartz estimates (see the bound \eqref{Stri0}).\ligne

Proposition \ref{regularite} is  a consequence of~\cite[Inequality~(1.6)]{PRT1}, but we give here a self contained proof  suggested by Nicolas Burq.\ligne

There are two key ingredients in the proof of Proposition \ref{regularite}. The first one is a deterministic estimate on the spectral function given in Lemma \ref{lemme5}, and the second is  the Khinchin inequality stated in Lemma \ref{Khin}.

\subsection{Deterministic estimates of the spectral function}
 We define the spectral function $\pi_{H}$ for the harmonic oscillator by 
  \begin{equation*}
  \pi_{H}(\lambda;x,y) = \sum_{\lambda_j\leq \lambda}\phi_{j}(x)\overline{\phi_{j}(y)},
  \end{equation*}
  and this definition does not depend on the choice of $\{\phi_{j}, j\in\N\}$.
   
  Let us recall some results of $\pi_{H}$, which were essentially obtained by 
  Thangavelu \cite[Lemma 3.2.2, p. 70]{Thanga} (see also  Karadzhov \cite{ka1} and  \cite[Section 3]{PRT1} for more details).\ligne
  
  Thanks to the Mehler formula, we can prove
\begin{equation}\label{karest}
\pi_H(\lambda; x,x) \leq C\lambda^{d/2}\exp\Big(-c\frac{\vert x\vert^2}{\lambda}\Big),\quad
\forall x\in\R^d, \;\lambda \geq 1.
\end{equation}
One also have the following more subtle bound, which is the heart of the work \cite{ka1}
  \beq\label{kara3}
\vert\pi_H(\lambda +\mu;x,x) - \pi_H(\lambda ;x,x)\vert \leq C(1+\vert\mu\vert)\lambda^{d/2 -1},\quad
\lambda \geq 1,\; \vert\mu\vert \leq C_0\lambda.
\eeq
This inequality gives of bound on $\pi_{H}$ in energy interval of size $\sim$1, which is the finest one can obtain.\ligne

Then we can prove (see \cite[Lemma 3.5]{PRT1})
\begin{lemm}\ph \label{lemme5}
Let $d\geq 2$ and assume that $|\mu|\leq c_{0}$, $r\geq 1$ and $\theta \geq 0$. Then  there exists $C>0$ so that for all~$\lambda\geq 1$
\begin{equation*}
\|\pi_{H}(\lambda+\mu;x,x)-\pi_{H}(\lambda;x,x)\|_{L^{r}(\R^{d})} \leq C \lambda ^{  \frac{d}2(1+\frac1r)-1  }.
\end{equation*}
\end{lemm}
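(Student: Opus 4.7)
The plan is to obtain the $L^r$ estimate by interpolating between an $L^\infty$ bound and an $L^1$ bound on the diagonal difference
\[
D(x):=\pi_H(\lambda+\mu;x,x)-\pi_H(\lambda;x,x).
\]
Since $|\mu|\le c_0$, the pointwise Karadzhov estimate \eqref{kara3} is uniform in $x\in\R^d$ and yields directly
\[
\|D\|_{L^\infty(\R^d)}\le C\lambda^{d/2-1}.
\]

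For the $L^1$ bound, I would write
\[
D(x)=\operatorname{sgn}(\mu)\sum_{\lambda_j\in J}|\phi_j(x)|^2,
\]
where $J$ is the open interval with endpoints $\lambda$ and $\lambda+\mu$. Using $\|\phi_j\|_{L^2}^2=1$, the $L^1$ norm of $D$ equals $\#\{j:\lambda_j\in J\}$. Since the eigenvalues of $H$ are $2|\ell|+d$ with $\ell\in\N^d$, and $J$ has length at most $c_0$, only finitely many energy levels $2k+d$ can lie in $J$, each with multiplicity $\binom{k+d-1}{d-1}\lesssim k^{d-1}$; for $k\sim\lambda/2$ this gives
\[
\|D\|_{L^1(\R^d)}\le C\lambda^{d-1}.
\]

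Interpolation via the elementary inequality $\|f\|_{L^r}^r\le\|f\|_{L^\infty}^{r-1}\|f\|_{L^1}$ then yields
\[
\|D\|_{L^r(\R^d)}\le \|D\|_{L^\infty}^{1-1/r}\|D\|_{L^1}^{1/r}\le C\lambda^{(d/2-1)(1-1/r)+(d-1)/r}=C\lambda^{(d/2)(1+1/r)-1},
\]
which is exactly the bound claimed.

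The proof is therefore essentially mechanical once \eqref{kara3} is granted; the only real harmonic-analytic input is the uniform-in-$x$ Karadzhov estimate, which is the delicate part of the analysis and has already been cited from \cite{ka1,Thanga}. A naive alternative that tries to combine \eqref{kara3} directly with the Mehler-type Gaussian bound \eqref{karest}, by splitting $\R^d$ at $|x|\sim\sqrt{\lambda}$, produces a spurious $(\log\lambda)^{d/(2r)}$ factor at the crossover radius $|x|^2\sim(\lambda/c)\log\lambda$; this is the obstacle that the $L^1$--$L^\infty$ interpolation through the Weyl counting bound cleanly avoids.
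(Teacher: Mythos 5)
Your proof is correct, and the $L^1$--$L^\infty$ interpolation is a clean way to obtain this bound. Both endpoints are handled correctly: the $L^\infty$ bound follows directly from the Karadzhov estimate \eqref{kara3}, which is uniform in $x$ once $|\mu|\le c_0$; and since the diagonal difference $D(x)$ has a fixed sign and each $\phi_j$ is $L^2$-normalised, $\|D\|_{L^1}$ equals the exact number of eigenvalues (with multiplicity) in an interval of bounded length near $\lambda$, which is $O(\lambda^{d-1})$ by the explicit multiplicities $\binom{k+d-1}{d-1}$ of the level $2k+d$. The elementary inequality $\|D\|_{L^r}^r\le\|D\|_{L^\infty}^{r-1}\|D\|_{L^1}$ then gives the stated exponent $\frac{d}{2}(1+\frac1r)-1$. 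The paper itself defers the proof to \cite[Lemma 3.5]{PRT1}, so there is no in-text proof to compare against, but your route is the natural one; and your closing observation is apt --- splitting $\R^d$ at $|x|\sim\sqrt{\lambda}$ and using the Mehler bound \eqref{karest} in the exterior does force the crossover out to $|x|^2\sim(\lambda/c)\log\lambda$ and contaminates the estimate with a logarithmic factor, which the trace identity for $\|D\|_{L^1}$ avoids entirely.
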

     %%%%%%%%%%%%%%%%%%%%%
    %%%%%%%%%%%%%%%%%%%%%%%
  \subsection{Proof of Proposition \ref{regularite}}
       
        To begin with, recall the Khinchin inequality which shows a smoothing property of the random series in 
the $L^{k}$ spaces for $k\geq 2$. See {\it e.g.} \cite[Lemma 4.2]{BT2} for the  proof.
\begin{lemm}\ph\label{Khin}
There exists $C>0$ such that for all real $k\geq 2$ and $(c_{n})\in \ell^{2}(\N)$
\begin{equation*} 
\|\sum_{n\geq 1}g_{n}(\om)\,c_{n}\|_{L_{\P}^{k}}\leq C\sqrt{k}\Big(\sum _{n\geq 1}|c_{n}|^{2}\Big)^{\frac12}.
\end{equation*}
\end{lemm}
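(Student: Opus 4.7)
The plan is to upgrade the single-variable exponential moment bound \eqref{expo} to a sub-Gaussian moment-generating function estimate on the full sum $S(\om)=\sum_{n\geq 1} c_n g_n(\om)$, and then to convert the resulting tail bound into the desired $L^k$ estimate by integration.

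Assuming first that the coefficients are real, set $\sigma^2=\sum_n|c_n|^2$. By independence of the $g_n$ and by applying \eqref{expo} with $\gamma=\lambda c_n$, I would obtain
\begin{equation*}
\E\bigl[\e^{\lambda S}\bigr] \;=\; \prod_n \E\bigl[\e^{\lambda c_n g_n}\bigr] \;\leq\; \prod_n \e^{c\lambda^{2}c_n^{2}} \;=\; \e^{c\lambda^{2}\sigma^{2}},\qquad \forall\,\lambda\in\R.
\end{equation*}
A Markov inequality applied to $\e^{\lambda S}$ and to $\e^{-\lambda S}$, optimised in $\lambda>0$ (the optimum being $\lambda=t/(2c\sigma^{2})$), then yields the sub-Gaussian tail
\begin{equation*}
\Prob\bigl(|S|>t\bigr) \;\leq\; 2\,\e^{-t^{2}/(4c\sigma^{2})},\qquad \forall\,t>0.
\end{equation*}
For complex coefficients I would split $c_n=a_n+ib_n$, apply the real case to each of $\sum a_n g_n$ and $\sum b_n g_n$, and use the triangle inequality, which only affects the constants.

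With the tail bound in hand, I would then write
\begin{equation*}
\E|S|^{k} \;=\; k\int_{0}^{+\infty}\!\!t^{k-1}\,\Prob(|S|>t)\,\dd t \;\leq\; 2k\int_{0}^{+\infty}\!\!t^{k-1}\,\e^{-t^{2}/(4c\sigma^{2})}\,\dd t,
\end{equation*}
and use the change of variables $s=t^{2}/(4c\sigma^{2})$ to reduce the integral to a Gamma function, getting $\E|S|^{k}\leq k\,(4c)^{k/2}\sigma^{k}\,\Gamma(k/2)$. Taking $k$-th roots and invoking Stirling's formula (which gives $\Gamma(k/2)^{1/k}\leq C\sqrt{k}$ uniformly for $k\geq 2$, and $k^{1/k}\leq\e$) produces $\|S\|_{L^{k}_{\Prob}}\leq C\sqrt{k}\,\sigma$, exactly the claim.

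No step in this outline is a genuine obstacle: the argument is the standard Chernoff-plus-tail-integration proof of Khinchin's inequality. The two mildly delicate points are the passage from real to complex coefficients by splitting into real and imaginary parts, and the use of Stirling's asymptotics to ensure the $\sqrt{k}$ scaling in the constant is uniform down to $k=2$.
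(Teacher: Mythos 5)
Your proposal is correct and is essentially the argument the paper refers to: the paper does not reprove the lemma but cites \cite[Lemma 4.2]{BT2}, and the proof there is exactly this Chernoff-type argument — use independence and the exponential moment hypothesis \eqref{expo} to get $\E[\e^{\lambda S}]\leq\e^{c\lambda^2\sigma^2}$, deduce a sub-Gaussian tail via Markov's inequality optimised in $\lambda$, integrate the tail to bound $\E|S|^k$ by a Gamma function, and use Stirling to extract the $\sqrt{k}$ growth. Your handling of complex $c_n$ by splitting into real and imaginary parts and your observation that Stirling yields a constant uniform down to $k=2$ are both standard and correct.
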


Now we fix $\dis\gamma=\sum_{n=0}^{+\infty}c_{n} \phi_{n}\in \A_{s}$ and denote by $\dis \gamma^{\om}=\sum_{n=0}^{+\infty}g_{n}(\omega)c_{n}\phi_{n}$. 

$\bullet$ Firstly, we treat the case $r<+\infty$. Set  $\dis \a = d(1/2-1/r)$ and set $\s=s+\a$. Observe that it suffices to prove the estimation for $ K \gg \|\gamma\|_{{\mathcal H}^{s}(\R^{d})} $.

Let $k\geq 1$, then  by definition 
\begin{eqnarray}
\int_{\H^{s}(\R^{d})}\big\|  \e^{-i(t+\tau)H}u  \big\|^{k}_{L^{q}_{[0,T]}\W^{\s,r}(\R^{d})}\text{d}\mu(u)&=&\int_{\Omega}\big\|  \e^{-i(t+\tau)H}\gamma^{\om}  \big\|^{k}_{L^{q}_{[0,T]}\W^{\s,r}(\R^{d})}\text{d}\P(\om) \nonumber\\
&=&\int_{\Omega}\big\|  \e^{-i(t+\tau)H}H^{\s/2}\gamma^{\om}  \big\|^{k}_{L^{q}_{[0,T]}L^{r}(\R^{d})}\text{d}\P(\om).\label{mu.defi}
\end{eqnarray}
Since $\e^{-i(t+\tau)H}H^{\s/2}\gamma^{\om}(x)=\sum_{n=0}^{+\infty}g_{n}(\omega)c_{n}\lambda^{\s/2}_{n}\e^{-i(t+\tau)\lambda_{n}}\phi_{n}(x)$, by Lemma \ref{Khin} we get
 \begin{equation*}
\|\e^{-i(t+\tau)H}H^{\s/2}\gamma^{\om}(x)\|_{L^k_{\P}}\leq C\sqrt{k}\|\e^{-i(t+\tau)H}H^{\s/2}\gamma^{\om}(x)\|_{L^2_{\P}}= 
 C\sqrt{k}\big(\sum_{n=0}^{+\infty}\lambda^{\s}_{n}|c_{n}|^{2}|\phi_{n}(x)|^2\big)^{1/2}.
 \end{equation*}
Assume that   $k\geq r$, then by the integral Minkowski inequality, the previous line and the triangle inequality, we get
\begin{eqnarray} 
\|\e^{-i(t+\tau)H}H^{\s/2}\gamma^{\om}\|_{L^k_{\P}L^r_x}&\leq& \|\e^{-i(t+\tau)H}H^{\s/2}\gamma^{\om}\|_{L^r_xL^k_{\P}}\nonumber\\
&\leq &  C\sqrt{k}\big\|\sum_{k=0}^{+\infty}\lambda^{\s}_{k}|c_{k}|^{2}|\phi_{k}|^2\big\|^{1/2}_{L^{r/2}(\R^{d})}\nonumber\\
&\leq &  C\sqrt{k}\Big(\sum_{j=1}^{+\infty} \big\|\sum_{k\in I(j)}\lambda^{\s}_{k}|c_{k}|^{2}|\phi_{k}|^2\big\|_{L^{r/2}(\R^{d})}\Big)^{1/2}.\label{pom}
\end{eqnarray}
Condition \eqref{condi0} implies  that for all $x\in \R^{d}$ and $k\in I(j)=\big\{n\in \N,\;2j\leq \lambda_{n}< 2(j+1)\big\}$
 \begin{equation*}
 \lambda^{\s}_{k}|c_{k}|^{2}|\phi_{k}(x)|^2\leq Cj^{\s}\sum_{n\in I(j)}|c_{n}|^{2} \cdot \frac{|\phi_{k}(x)|^{2}}{\# I(j)},
 \end{equation*}
 and thus, by Lemma \ref{lemme5}, and the fact that $\# I(j) \sim c j^{d-1}$
 \begin{eqnarray*}
 \big\|\sum_{k\in I(j)} \lambda^{\s}_{k}|c_{k}|^{2}|\phi_{k}(x)|^2\big\|_{L^{r/2}(\R^{d})}&\leq &Cj^{\s}\sum_{n\in I(j)}|c_{n}|^{2} \cdot \frac{ \big\|    \sum_{k\in I(j)}|\phi_{k}(x)|^{2}   \big\|_{L^{r/2}(\R^{d})}     }{\# I(j)}\\
 &\leq & C j^{\s+d(1/r-1/2)}\sum_{n\in I(j)}|c_{n}|^{2}\\
 &= & C j^{s}\sum_{n\in I(j)}|c_{n}|^{2}.
 \end{eqnarray*}
 The latter inequality together with \eqref{pom} gives 
 \begin{equation*}
 \|\e^{-i(t+\tau)H}H^{\s/2}\gamma^{\om}\|_{L^k_{\P}L^r_x} \leq C\sqrt{k}\|\gamma\|_{\H^{s}(\R^{d})},
 \end{equation*}
 and for $k\geq r$, by Minkowski, 
  \begin{equation*}
 \|\e^{-i(t+\tau)H}H^{\s/2}\gamma^{\om}\|_{L^k_{\P}L^{q}_{[0,T]}L^r_x} \leq C\sqrt{k}\,T^{1/q}\|\gamma\|_{\H^{s}(\R^{d})}.
 \end{equation*}
  Then, using \eqref{mu.defi}and the  Bienaym\'e-Tchebichev inequality, we obtain
\begin{eqnarray*}
\mu\big(\,u\in \H^{s}\,:\, \|\e^{-i(t+\tau)H}u\|_{L^{q}_{[0,T]}\W^{\s,r}(\R^{d})}>K\,\big)&\leq& 
(K^{-1}\|\e^{-i(t+\tau)H}H^{\s/2}\gamma^{\om}\|_{L^k_{\P}L^{q}_{[0,T]}L^r_x})^k\\
&\leq&
(CK^{-1}\sqrt{k}\,T^{1/q}\|\gamma\|_{\H^{s}(\R^{d})} )^{k}\,.
\end{eqnarray*}
Finally, if $ K \gg \|\gamma\|_{{\mathcal H}^{s}(\R^{d})} $,  we can choose $\dis k=     \frac{ K^{2}}{   2CT^{2/q}\|\gamma\|^{2}_{\H^{s}(\R^{d})} }     \geq  r$, which yields the result.

$\bullet$ Assume now that  $r=+\infty$. We use the Sobolev inequality to get $\|u\|_{\W^{s,\infty}}\leq C\|u\|_{\W^{\tilde{s},\tilde{r}}}$ with $\tilde{s}=s+2d/\tilde{r}$ for $\tilde{r}\geq 1$ large enough; hence we can apply the previous result for $r<+\infty$.

         \begin{rema}
         A similar result to Proposition \ref{regularite} holds, with the same gain of derivatives, when $I(\lambda)$ is replaced with the dyadic interval $J(j)=\big\{n\in \N\;,2^{j} \leq \lambda_{n}< 2^{j+1}\big\}$. Then  the condition \eqref{condi0} becomes 
    \begin{equation}\label{condi2} 
  |c_{k}|^{2}\leq \frac{C}{\# J(j)}\sum_{n\in J(j)}|c_{n}|^{2},\quad \forall k\in J(j),\quad \forall  j\geq 1,
  \end{equation}
          which seems more  restrictive. Indeed none of the conditions imply the other. 
          
          Observe that if we want to prove the result under condition \eqref{condi2},  the subtle estimate \eqref{kara3} is not needed, \eqref{karest} is enough.
  \end{rema}
          
  \begin{rema}
  For $d=1$ condition (\ref{condi0}) is always satisfied but  condition (\ref{kara3}) is not. Instead  we can use that $\Vert \phi_{k} \Vert_p \leq C\lambda_{k}^{-\theta(p)}$ with  $\theta(p) >0$ for $p>2$ (see \cite{KOTA}). For example if $p>4$ we have $\theta(p) = \frac{1}{4} -\frac{p-1}{6p}$. So we get  the  Proposition \ref{regularite}    with $s=p\theta(p)/4$ (see \cite{tho,BTT} where this is used).             
  \end{rema}

       \begin{rema}
Another approach could have been to exploit the particular basis $(\phi_{n})_{n\geq 1}$ which satisfy the good $L^{\infty}$ estimates given in \cite[Theorem 1.3]{PRT1}, and to construct  the measures $\mu$  as the image measures of  random series   of the form 
$$\gamma^{\om}(x)=\sum_{n\geq 1} c_{n}g_{n}(\om)\phi_{n}(x),$$     
with $c_{n}\in \ell^{2}(\N)$ which does not necessarily satisfy \eqref{condi0}. A direct application of the Khinchin inequality (as {\it e.g.} in \cite[Proposition 2.3]{tho}) then gives the same bounds as in Proposition \ref{regularite}. Observe that the condition   \eqref{condi0} is also needed in this approach, but it directly intervenes  in the construction of the $\phi_{n}$.

We believe that the strategy we adopted here is  slightly more general, since it seems to work even in  cases where we do not have a basis of eigenfunctions which satisfy analogous  bounds to \cite[Theorem~1.3]{PRT1}, as for example in the case of the operator $-\Delta+|x|^{4}$.
  \end{rema}

     %%%%%%%%%%%%%%%%%%%%%%%%%%%%%%%%%%%%%%%%%%%%
     %%%%%%%%%%%%%%%%%%%%%%%%%%%%%%%%%%%%%%%%%%%%
     
\section{Application to the local theory of the super-critical Schr\"odinger   equation}\label{Sect3}

\subsection{Almost sure local well-posedness}
 This subsection, devoted to the proof of Theorem \ref{letheoreme}, follows the argument of \cite{poiret2}. 
 
 Let $u_{0}\in L^{2}(\R^{d})$. We look for a solution to \eqref{SH} of the form $ u = \e^{-itH} u_0 + v $, where $v$ is some fluctuation term which is more regular than the linear profile $\e^{-it H}u_{0}$. By the Duhamel formula, the unknown $v$ has to be a fixed point of the operator
 \begin{equation}\label{def.L}
L ( v): =    \mp i \int_0^t   \e^{-i(t-s)H} |   \e^{-isH} u_0 + v(s) |^{p-1} (  \e^{-isH} u_0 + v(s) )  \, \text{d}s,
\end{equation}
 in some adequate functional space, which is a Strichartz space.\ligne

To begin with, we recall the Strichartz  estimates for the harmonic oscillator.  A couple $(q,r)\in [2,+\infty]^2$ is called admissible if 
\begin{equation*}
\frac2q+\frac{d}{r}=\frac{d}2\quad \text{and}\quad (d,q,r)\neq (2,2,+\infty),
\end{equation*}
and if one defines 
\begin{equation*}
 {X}^s_{T}:= \bigcap_{(q,r) \ admissible} L ^q\big( [-T, T] \,; \W^{s,r}( \R^d )\big),
\end{equation*}
then for all $T>0$ there exists $C_{T}>0$ so that for all $u_{0}\in \H^{s}(\R^{d})$ we have 
\begin{equation}\label{Stri0}
\|\e^{-itH}u_{0}\|_{X^{s}_{T}}\leq C_{T}\|u_{0}\|_{\H^{s}(\R^{d}).}
\end{equation}
We will also need the inhomogeneous version of Strichartz:  For all $ T>0 $, there exists $C_{T}>0$ so that for all   admissible couple       $ ( q, r ) $ and function  $ F \in L^{q'}( [T,T]; \W^{s,r'} (\R^d)) $,
\begin{equation}\label{Stri}
 \big\|  \int _0^t \e^{-i(t-s)H} F(s) ds   \big\| _{   X^s_T} \leq C_{T} \| F \|_{  L^{q'} ([-T,T],{\W}^{s,r'} (\R^d)) },
\end{equation}
where $ q' $ and $ r'$ are the H\"older conjugate of $ q $ and $ r $. We refer to \cite{poiret2} for a proof.\ligne

The next result is a direct application of the Sobolev embeddings and H\"older.
\begin{lemm}\ph \label{injection}
Let $ (q,r) \in [2,\infty [ \times [2,\infty ]$, and let  $s,s_0 \geq 0 $ be so that $ s-s_0 >  \frac{d}{2} - \frac{2}{q} - \frac{d}{r} $. Then there exist $ \kappa,C > 0 $ such that for any $ T \geq 0 $ and $  u \in {X}_T^s $,
\begin{equation*}
\| u \|_{ L^q( [-T,T] , \W^{s_0,r}(\R^d)) }  \leq  C T ^\kappa \|u\|_{  {X}_T^s }.
\end{equation*}
\end{lemm}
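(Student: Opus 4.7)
The strategy is to insert an auxiliary admissible Strichartz pair $(\tilde q, \tilde r)$ between the two sides, and chain three elementary bounds: first H\"older's inequality in time, then a Sobolev embedding in space, and finally the defining property of $X^s_T$. Explicitly,
\begin{equation*}
\|u\|_{L^q_T \W^{s_0,r}} \;\leq\; T^{1/q-1/\tilde q}\,\|u\|_{L^{\tilde q}_T \W^{s_0,r}} \;\leq\; CT^{1/q-1/\tilde q}\,\|u\|_{L^{\tilde q}_T \W^{s,\tilde r}} \;\leq\; CT^{1/q-1/\tilde q}\,\|u\|_{X^s_T},
\end{equation*}
where the first step uses $\tilde q \geq q$, the second uses the Sobolev embedding $\W^{s,\tilde r}(\R^d) \hookrightarrow \W^{s_0,r}(\R^d)$ which on $\R^d$ is valid provided $\tilde r \leq r$ and $s - s_0 \geq d(1/\tilde r - 1/r)$, and the third is just the inclusion $X^s_T \subset L^{\tilde q}_T \W^{s,\tilde r}$ for the admissible pair $(\tilde q, \tilde r)$. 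The resulting exponent is $\kappa = 1/q - 1/\tilde q$, strictly positive as soon as one can arrange $\tilde q > q$.

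The substance of the argument is therefore simply to exhibit such a pair $(\tilde q, \tilde r)$. Eliminating $\tilde r$ via the admissibility identity $d/\tilde r = d/2 - 2/\tilde q$, the Sobolev derivative balance becomes $2/\tilde q \geq d/2 - d/r - (s-s_0)$, while $\tilde q > q$ reads $2/\tilde q < 2/q$. The interval
\begin{equation*}
\bigl[\,d/2 - d/r - (s-s_0),\; 2/q\,\bigr)
\end{equation*}
is non-empty precisely under the strict hypothesis $s - s_0 > d/2 - 2/q - d/r$, so a valid $\tilde q$ exists. The remaining constraint $\tilde r \leq r$ reduces to $2/\tilde q \leq d/2 - d/r$ and is accommodated by taking $\tilde q$ sufficiently large in the above interval; in the degenerate case $r = 2$ this forces $\tilde r = 2$ and $\tilde q = \infty$, still yielding $\kappa = 1/q > 0$. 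Since throughout $\tilde q > q \geq 2$, the forbidden endpoint $(d,\tilde q,\tilde r) = (2,2,\infty)$ is automatically avoided.

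The main obstacle is the elementary but somewhat tedious bookkeeping required to confirm the constraints on $\tilde q$ remain simultaneously satisfiable in all boundary configurations ($r = 2$, $r = \infty$, $q$ close to $2$, or $s$ close to $s_0$). Once this verification is complete the proof is mechanical, relying solely on Strichartz, Sobolev, and H\"older; no stochastic input enters and the lemma is a purely deterministic corollary of \eqref{Stri0}.
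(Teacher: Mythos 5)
Your approach matches the paper's unwritten argument exactly: insert an auxiliary admissible pair $(\tilde q,\tilde r)$, then chain H\"older in time, Sobolev in $x$, and the definition of $X^s_T$ --- this is precisely what the paper means by \emph{a direct application of the Sobolev embeddings and H\"older}, and the exponent bookkeeping you carry out (eliminating $\tilde r$ through admissibility and locating $2/\tilde q$ in the interval $[\,d/2-d/r-(s-s_0),\;2/q\,)$) is the right computation, yielding $\kappa=1/q-1/\tilde q>0$.

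One caveat is worth making explicit, since you claim the constraints are satisfiable in all boundary configurations. The Sobolev step $\W^{s,\tilde r}\hookrightarrow\W^{s_0,r}$ with $\tilde r\le r$ tacitly requires $s\ge s_0$, and this is \emph{not} a consequence of the displayed hypothesis $s-s_0>d/2-2/q-d/r$: for $r=2$ it reads $s-s_0>-2/q$, which is compatible with $s<s_0$. In that regime your feasibility interval for $2/\tilde q$ is genuinely empty, and indeed the lemma as literally stated fails --- with $u=\e^{-itH}\phi_n$ the Strichartz bound \eqref{Stri0} gives $\|u\|_{X^s_T}\le C_T\lambda_n^{s/2}$ while $\|u\|_{L^q_T\H^{s_0}}=(2T)^{1/q}\lambda_n^{s_0/2}$, so the ratio blows up as $n\to\infty$ whenever $s_0>s$. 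The correct reading therefore carries the implicit restriction $s\ge s_0$; this is automatic in every invocation in the paper (always $s_0=0$), so nothing downstream is affected, but your assertion that the bookkeeping closes in all boundary cases should be tempered by that tacit hypothesis.
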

 
We now introduce the appropriate sets in which we can take profit of the stochastic estimates of the previous section. Fix $\mu=\mu_{\gamma}\in \M^{0}$ and for  $ K   \geq 0 $ and $\eps> 0 $,   define the set $ G_{d}(K) $ as 
 \begin{equation*}
G_{d}(K)=\big\{ w\in L^{2}(\R^{d})\,:\;\;   \|w\|_{  L^2(\R^d) }   \leq K    \;\;\text{and}\;\;   \|  \e^{-itH} w\|_{ L_{[-2\pi,2\pi]}^{1/\eps}\W^{  d/2-\eps , \infty } (\R^d) }\leq K \; \big\}.
\end{equation*}
 Then  by Proposition \ref{regularite}, 
 \begin{equation}\label{G.D}
 \mu\big( \,(G_{d}(K))^{c}\,\big)\leq  \mu\big( \, \|w\|_{  L^2(\R^d) }   > K\,  \big)+ \mu\big( \, \|  \e^{-itH} w \|_{ L_{[-2\pi,2\pi]}^{1/\eps}\W^{  d/2-\eps , \infty } (\R^d) }>   K\,  \big)\leq C\e^{-\frac{cK^{2}}{\|\gamma\|^{2}_{L^{2}}}}.
 \end{equation}
 
 We want to perform a fixed point argument on $L$ with initial condition $u_{0}\in G_{d}(K)$ for some $K>0$ and $\eps>0$ small enough. We begin by establishing some estimates.
\begin{lemm}\ph \label{estim1}
Let $ s \in  \big]   \frac{d}{2}   - \frac{2}{p-1}  ; \frac{d}{2}     \big[ $ then for $\eps>0$ small enough there exist $ C >  0 $ and $ \kappa > 0 $ such that  for any $ 0 < T \leq 1 $, $u_{0}\in G_{d}(K)$, $ v \in {X}^s_T $ and $ f_i = v $ or $ f_i = \e^{-itH} u_0 $,
\begin{equation}\label{esti1}
\big\| \,H^{s/2}( v  )    \prod_{ i = 2 } ^p  f_i  \big\|_{  L^1( [-T,T]  , L ^2(\R^d)) } \leq C  T ^\kappa(  K ^p + \|v\|^p_{{X}^s_T}  ),
\end{equation}
and
\begin{equation}\label{esti2}
\big\| \,H^{s/2}( \e^{-itH} u_0   )    \prod_{ i = 2 } ^p  f_i  \big\|_{  L^1( [-T,T]  , L ^2(\R^d)) } \leq C  T ^\kappa(  K ^p + \|v\|^p_{{X}^s_T}  ).
\end{equation}
\end{lemm}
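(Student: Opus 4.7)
The plan is to bound the left-hand sides of \eqref{esti1} and \eqref{esti2} by pairing Hölder in $x$ (placing one factor in $L^2_x$ and the $p-1$ others in $L^\infty_x$) with Hölder in $t$, using Lemma \ref{injection} to control any factor $v$ through $\|v\|_{X^s_T}$ and the definition of $G_d(K)$ to control any factor $\e^{-itH}u_0$. Let $m\in\{0,\ldots,p-1\}$ denote the number of factors equal to $\e^{-itH}u_0$ among $f_2,\ldots,f_p$, and choose $\eps>0$ so small that $\eps<\min(\tfrac{d}{2}-s,\tfrac{1}{p-1})$. The harmonic Sobolev embedding $\W^{d/2-\eps,\infty}\hookrightarrow\W^{s,\infty}$ combined with $u_0\in G_d(K)$ yields, pointwise in $\om$,
\begin{equation*}
\|\e^{-itH}u_0\|_{L^{1/\eps}_{[-T,T]}L^\infty_x}\leq K,\qquad \|H^{s/2}\e^{-itH}u_0\|_{L^{1/\eps}_{[-T,T]}L^\infty_x}\leq CK,
\end{equation*}
which will control every $L^\infty_x$-slot occupied by a random linear profile.

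For \eqref{esti1} (i.e.\ $f_1=v$), the strategy is to place $H^{s/2}v$ in $L^\infty_T L^2_x$---bounded by $\|v\|_{X^s_T}$ via the admissible Strichartz pair $(\infty,2)$---and every other factor in $L^\infty_x$. Assigning the common exponent $q_v$ to every $v$-factor and $1/\eps$ to every $\e^{-itH}u_0$-factor, the Hölder constraint in time reads $(p-1-m)/q_v + m\eps = 1$. When $m<p-1$, take $q_v=(p-1-m)/(1-m\eps)$; the hypothesis $s>\frac{d}{2}-\frac{2}{p-1}$ forces $\frac{2}{p-1}>\frac{d}{2}-s$, so for $\eps$ small one has $q_v<\frac{2}{d/2-s}$ strictly, and Lemma \ref{injection} (with $s_0=0$, $r=\infty$) yields $\|v\|_{L^{q_v}_T L^\infty_x}\leq CT^\kappa\|v\|_{X^s_T}$ with some $\kappa>0$. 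When $m=p-1$, no $v$-factor beyond $f_1$ is present and the gain comes instead from Hölder in time on the $\e^{-itH}u_0$-factors, producing $T^{1-(p-1)\eps}$. Either way the product is controlled by $CT^\kappa K^m\|v\|^{p-m}_{X^s_T}\leq CT^\kappa(K^p+\|v\|^p_{X^s_T})$ by Young's inequality.

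For \eqref{esti2} (i.e.\ $f_1=\e^{-itH}u_0$), I would place $H^{s/2}\e^{-itH}u_0$ in $L^{1/\eps}_T L^\infty_x$ via the second display above, and then fill the $L^2_x$-slot with one of $f_2,\ldots,f_p$. If $m<p-1$, at least one $f_i$ equals $v$ and I use $\|v\|_{L^{q_2}_T L^2_x}\leq T^{1/q_2}\|v\|_{X^s_T}$; if $m=p-1$, no $v$-factor is available and I instead use $\|\e^{-itH}u_0\|_{L^{q_2}_T L^2_x}=T^{1/q_2}\|u_0\|_{L^2_x}\leq T^{1/q_2}K$, invoking the $L^2$-bound from the definition of $G_d(K)$. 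The remaining $p-2$ factors go to $L^\infty_x$ exactly as before, giving the time-balance $1/q_2 = 1-(m+1)\eps-(p-2-m)/q_v$ (the last term being absent for $m=p-1$), which stays strictly positive once $\eps<1/(p-1)$ and $q_v$ is chosen slightly below $2/(d/2-s)$. The resulting factor $T^{1/q_2}$ plays the role of the gain $T^\kappa$.

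The main obstacle is the subcase $m=p-1$ of \eqref{esti2}: every factor of the monomial is then a rough random linear profile, so no deterministic $v$ is available to absorb the $L^2_x$-integration. This forces the simultaneous use of \emph{both} components of the definition of $G_d(K)$---the $L^2$-control of $u_0$ for the $L^2_x$-slot and the $\W^{d/2-\eps,\infty}$-control of $\e^{-itH}u_0$ for all $L^\infty_x$-slots---and it is this case that dictates the requirement $\eps<\min(\tfrac{d}{2}-s,\tfrac{1}{p-1})$.
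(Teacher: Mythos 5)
Your proof is correct, and it rests on the same three ingredients as the paper's — H\"older in space and time, the Strichartz-plus-Sobolev embedding of Lemma~\ref{injection}, and the two components of the definition of $G_d(K)$ — but the treatment of \eqref{esti2} genuinely deviates. For \eqref{esti1} both proofs place $H^{s/2}v$ in $L^\infty_T L^2_x$ and put every remaining factor in $L^\infty_x$; the paper uses the single time exponent $p-1$ for every $f_i$, while you allocate $q_v$ to the $v$-factors and $1/\eps$ to the random profiles, which is a harmless generalization. For \eqref{esti2} the paper places $H^{s/2}(\e^{-itH}u_0)$ in $L^p_T L^{2dp}_x$ (controlled via $\|\e^{-itH}u_0\|_{L^p_T\W^{s,2dp}}$), puts each $f_i$ in $L^p_T L^{2dp(p-1)/(dp-1)}_x$, and when $f_i=\e^{-itH}u_0$ interpolates between the $L^2_x$ bound $\|u_0\|_{L^2}$ and the $L^\infty_x$ bound from $G_d(K)$. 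You instead put $H^{s/2}(\e^{-itH}u_0)$ directly in $L^{1/\eps}_T L^\infty_x$ via the embedding $\W^{d/2-\eps,\infty}\hookrightarrow\W^{s,\infty}$, reserve the $L^2_x$ slot for exactly one factor, and fill it either with a $v$-factor (via the $(\infty,2)$ Strichartz pair) or, in the extreme case $m=p-1$, with a random profile controlled by the $L^2$ clause of $G_d(K)$. Your route avoids the $L^r_x$ interpolation altogether, at the modest cost of invoking the boundedness of negative powers of $H$ on $L^\infty$ (true by the Mehler kernel, though not spelled out in the paper, which confines the norm-equivalence statement to $1<p<\infty$); your observation that the case $m=p-1$ of \eqref{esti2} is the binding one and forces both clauses of $G_d(K)$ to be used simultaneously is an accurate diagnosis of the structure. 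Both arguments yield the same conclusion under the same hypotheses.
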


\begin{proof}
$\bullet$ First we prove \eqref{esti1}. Thanks to the H\"older inequality,
\begin{align*}
\big\| \,|\nabla|^s( v  )    \prod_{ i = 2 } ^p  f_i  \big\|_{  L^1( [-T,T]  , L ^2(\R^d)) } & \leq \| \,| \nabla|^s( v  ) \|_{  L^\infty ( [-T,T], L^2(\R^d)) }  \prod_{ i = 2 } ^p \| f_i \|_{  L^{p-1} ( [-T,T], L^\infty(\R^d)) }
\end{align*}
and
\begin{align*}
\big\| \<x\>^s   v      \prod_{ i = 2 } ^p  f_i  \big\|_{  L^1( [-T,T]  , L ^2(\R^d)) } & \leq \| \<x\>^s v   \|_{  L^\infty ( [-T,T], L^2(\R^d)) }  \prod_{ i = 2 } ^p \| f_i \|_{  L^{p-1} ( [-T,T], L^\infty(\R^d)) }
\\ & \leq \| v \|_{  L^\infty ( [-T,T], {H}^s (\R^d)) }  \prod_{ i = 2 } ^p \| f_i \|_{  L^{p-1} ( [-T,T], L^\infty(\R^d)) }.
\end{align*}
If $ f_i = v $, then as $ s > \frac{d}{2} - \frac{2}{p-1} $, we can use  Lemma \ref{injection} to obtain \begin{equation*}
\| v \|_{  L^{p-1} ( [-T,T], L^\infty(\R^d)) }  \leq C T^{\kappa  } \| v \|_{  {X}^s_T }.
\end{equation*}
If $ f_i = \e^{-itH} u_0 $,  then by definition of $G_{d}(K)$ we have for $\eps>0$ small enough
\begin{equation*}
\| \e^{-itH} u_0  \|_{  L^{p-1} ( [-T,T], L^\infty(\R^d)) }  \leq T^{   \kappa } \| \e^{-itH} u_0 \|_{ L^{1/\eps} (  [-2 \pi, 2 \pi ] ,  \W^{d/2-\eps,\infty} ( \R^d)  )  }  \leq T^{   \kappa }  K. 
\end{equation*}
$\bullet$ We now turn to \eqref{esti2}. Thanks to the H\"older inequality, we have 
\begin{multline*}
 \quad \big\| \,|\nabla|^s( \e^{-itH} u_0   )    \prod_{ i = 2 } ^p  f_i  \big\|_{  L^1( [-T,T]  , L ^2(\R^d)) }  \leq\\ 
\begin{aligned}
&\leq \| \  |\nabla|^s( \e^{-itH} u_0  )   | |_{  L^{p } (  [-T, T ] ,  L ^ {  2dp }  ( \R^d )  )  }  \prod_{ i = 2 } ^p \|  f_i \|_{  L^{p} ( [-T,T],  L^{ \frac{2dp(p-1)}{dp-1} }(\R^d)) }
\\ 
&  \leq \  \|   \e^{-itH} u_0     | |_{  L^{p } (  [-T, T ] ,  \W ^ { s, 2dp }  ( \R^d )  )  }  \prod_{ i = 2 } ^p \|  f_i \|_{  L^{p} ( [-T,T],  L^{ \frac{2dp(p-1)}{dp-1} }(\R^d)) }.
\end{aligned}
\end{multline*}
If $ f_i = \e^{-itH} u_0 $, by interpolation, we obtain for some $0\leq \theta\leq 1$
\begin{align*}
 \| \e^{-itH} u_0  \|_{   L^{p} ( [-T,T],  L^{ \frac{2dp(p-1)}{dp-1} }(\R^d))}   & \leq  C  T ^{   \kappa  }   \|  u_0  \|^{1-\theta}_{   L^2(\R^d)  }  \| \e^{-itH} u_0  \|^{\theta}_{   L^{1/\eps} ( [-T,T],   L^{ \infty  }(\R^d))  }
 \\ & \leq C  T ^{   \kappa  }  K.
\end{align*}
If $ f_i = v $, as $ s > \frac{d}{2} - \frac{2}{p-1}  > \frac{d}{2} - \frac{2}{p} - \frac{d(dp-1)}{2dp(p-1)} $ (because $ p \geq 3 $ and $ d \geq 2 $) then thanks to  Lemma \ref{injection}, we find 
\begin{equation*}
\| v  \|_{  L^{p} ( [-T,T],  L^{ \frac{2dp(p-1)}{dp-1} }(\R^d)) }  \leq C  T^\kappa  \|v\|_{{X}^s_T } . 
\end{equation*}
\end{proof}

 We are now able to establish the estimates which will be useful to apply a fixed point theorem.
\begin{prop}\ph \label{pointfixe1} Let $ s \in \big] \frac{d}{2}   - \frac{2}{p-1}  ; \frac{d}{2}    \big[ $. Then for $\eps>0$ small enough,  there exist $ C > 0 $ and $ \kappa > 0 $ such that if $ u_0 \in G_{d}(K)$ for one $ K > 0 $ then for any $ v,v_{1},v_{2} \in {X}_T^s $ and $ 0 < T \leq 1 $,
 \begin{equation*}
\bigg\|  \int_0^t   \e^{-i(t-s)H}   |  \e^{-isH} u_0 + v |^{p-1}   (   \e^{-isH} u_0 + v ) \, \text{d}s   \bigg\|_{  {X}_T^s }  
 \leq C  T^\kappa   ( K ^p +  \|v\|^p_{  {X}_T^s } ),
\end{equation*} 
and
 \begin{align*}
& \bigg\|  \int_0^t   \e^{-i(t-s)H}    |  \e^{-isH} u_0 + v_1 |^{p-1}   (  \e^{-isH} u_0 + v_1 )  \, \text{d}s  
\\ & \hspace*{3cm} -  \int_0^t   \e^{-i(t-s)H}  |  \e^{-isH} u_0 + v_2 |^{p-1}   (  \e^{-isH} u_0 + v_2 )  \, \text{d}s  \bigg\|_{  {X}_T^s }  \leq 
\\ & \leq C  T ^\kappa   \|v_1-v_2\|_{ {X}^s_T }  ( K ^{p-1} +  \|v_1\|^{p-1}_{  {X}_T^s } + \|v_2\|^{p-1}_{  {X}_T^s } ).
\end{align*} 
\end{prop}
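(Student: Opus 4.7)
The plan is to reduce both estimates to the pointwise multilinear bounds already provided by Lemma~\ref{estim1}, using the inhomogeneous Strichartz inequality \eqref{Stri} as the delivery mechanism and exploiting the fact that, since $p\geq 3$ is odd, the nonlinearity $|w|^{p-1}w$ is a homogeneous polynomial of degree $p$ in $(w,\overline w)$.

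First, I would apply \eqref{Stri} with the endpoint admissible couple $(q,r)=(\infty,2)$, whose H\"older duals are $(q',r')=(1,2)$. This transforms the task of bounding the Duhamel integral in $X^s_T$ into the task of bounding $\|H^{s/2}F\|_{L^1([-T,T];L^2(\R^d))}$, where $F(s)=|\e^{-isH}u_0+v(s)|^{p-1}(\e^{-isH}u_0+v(s))$. Because $p$ is odd, I may expand
\begin{equation*}
F=\sum_{\text{multi-index }\alpha} c_\alpha\, f_1^\alpha\cdots f_p^\alpha,
\end{equation*}
where each factor $f_i^\alpha$ belongs to $\{\e^{-isH}u_0,\overline{\e^{-isH}u_0},v,\bar v\}$. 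Using that $\|H^{s/2}\cdot\|_{L^2}$ is equivalent to $\||\nabla|^s\cdot\|_{L^2}+\|\la x\ra^s\cdot\|_{L^2}$ (as recalled in the preliminaries), a fractional Leibniz rule plus the trivial pointwise bound $\la x\ra^s|f_1\cdots f_p|\le(\la x\ra^s|f_i|)\prod_{j\neq i}|f_j|$ allow me to distribute $H^{s/2}$ onto a single factor at the cost of a sum over which factor carries the derivative. Each resulting summand is then precisely of the form treated in Lemma~\ref{estim1}: either \eqref{esti1} when the derivative lands on a copy of $v$ (or $\bar v$), or \eqref{esti2} when it lands on a copy of $\e^{-isH}u_0$ (or its conjugate). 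Summing over the finitely many terms gives the required bound $CT^\kappa(K^p+\|v\|^p_{X^s_T})$, which proves the first estimate.

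For the Lipschitz-type difference estimate, I would factor
\begin{equation*}
|\e^{-isH}u_0+v_1|^{p-1}(\e^{-isH}u_0+v_1)-|\e^{-isH}u_0+v_2|^{p-1}(\e^{-isH}u_0+v_2) = (v_1-v_2)\,P_1+(\bar v_1-\bar v_2)\,P_2,
\end{equation*}
where $P_1,P_2$ are polynomials of degree $p-1$ in $\e^{-isH}u_0$, $v_1$, $v_2$ and their conjugates; the existence of such a factorisation uses only that $|w|^{p-1}w$ is polynomial when $p$ is odd. Again after Strichartz and a fractional Leibniz distribution of $H^{s/2}$, every resulting term is a product of $p$ factors, exactly one of which is either $v_1-v_2$ or $\overline{v_1-v_2}$ and the others are drawn from $\{\e^{-isH}u_0,v_1,v_2\}$ and their conjugates. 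Applying essentially the same H\"older argument as in Lemma~\ref{estim1}, with the difference $v_1-v_2$ playing the role of the distinguished factor, yields the announced bound with $\|v_1-v_2\|_{X^s_T}$ and the symmetric combination $K^{p-1}+\|v_1\|^{p-1}_{X^s_T}+\|v_2\|^{p-1}_{X^s_T}$.

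The principal technical point I anticipate is the correct application of the fractional Leibniz rule in the harmonic Sobolev setting, so that the derivative can be transferred onto a single factor without losing the $T^\kappa$ smallness. The $\la x\ra^s$ part is immediate by the pointwise inequality above, and the $|\nabla|^s$ part is a standard Kato--Ponce-type estimate; both combine cleanly because all the auxiliary factors are placed in $L^{p-1}_tL^\infty_x$ (or a mildly modified Lebesgue space), for which Lemma~\ref{injection} and the definition of $G_d(K)$ supply the requisite time-integrability with a positive power of $T$, whenever $\eps$ is chosen sufficiently small and $s>d/2-2/(p-1)$.
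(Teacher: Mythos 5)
Your proposal is correct and follows essentially the same route as the paper: reduce via the inhomogeneous Strichartz estimate \eqref{Stri} with dual exponents $(q',r')=(1,2)$ to an $L^1_T\H^s$ bound on the polynomial nonlinearity, distribute $H^{s/2}$ term by term (fractional Kato--Ponce for $|\nabla|^s$, the pointwise bound for $\la x\ra^s$), and invoke Lemma~\ref{estim1} on each summand; the Lipschitz estimate is obtained from the analogous factorisation of the difference into $(v_1-v_2)P_1+(\bar v_1-\bar v_2)P_2$. The paper is simply terser at the Leibniz-distribution step and declares the contraction bound ``similar,'' so your extra explicitness is a presentation choice, not a different argument.
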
 
\begin{proof}
We only prove the first claim, since the proof of the second is similar. Using the Strichartz inequalities \eqref{Stri}, we obtain
\begin{multline*}
 \bigg\|  \int_0^t   \e^{-i(t-s)H}  |  \e^{-isH} u_0 + v |^{p-1}  (   \e^{-isH} u_0 + v ) \ \text{d}s   \bigg\|_{  {X}_T^s }  \leq \\
 \begin{aligned}
&
 \leq  C  \big\|  \,  |  \e^{-isH} u_0 + v |^{p-1}  (   \e^{-isH} u_0 + v )   \big\|_{   L_{ [-T,T] }^1 {\H}^s(\R^d)   }.
\end{aligned}
\end{multline*}

Then, using  Lemma \ref{estim1}, we obtain the existence of $ \kappa > 0  $ such that for any $ u_0 \in G_{d}(K) $, $ 0 < T \leq 1 $ and $ v  \in {X}^s_T $,
\begin{align*}
\big\| \,  H^{s/2}  \left(  |  \e^{-isH} u_0 + v |^{p-1}  (   \e^{-isH} u_0 + v )  \right)   \big\|_{   L^1( [-T,T] , L^2 (\R^d) )   }  \leq C T ^\kappa ( K^p +   \|v\|^p_{  {X}_T^s }   ).
\end{align*}
\end{proof}
\begin{proof}[Proof of Theorem \ref{letheoreme}]
We   now  complete the contraction argument on $L$ defined in \eqref{def.L} with some   $u_{0} \in G_{d}(K)$. 
 According to the Proposition \ref{pointfixe1}, there exist $ C > 0 $ and $ \kappa > 0 $ such that 
\begin{align*}
& \| L (v)  \|_{  { X } ^s_T  } \leq C T^\kappa ( K ^p + \| v\|^p _{ { X } ^s_T }) 
\\ & \| L (v_1)-L(v_2)  \|_{  { X } ^s_T  } \leq C T^\kappa \| v_1-v_2\| _{ { X } ^s_T } ( K ^{p-1} + \| v_1\|^{p-1}_{ { X } ^s_T }+ \| v_2\|^{p-1}_{ { X } ^s_T } ).
\end{align*}
Hence, if we choose $T>0$ such that  $ K =   (  \frac{1}{8 C  T^\kappa }  )^{  \frac{1}{p-1} }    $ then $ L $ is a contraction in the space $ B_{{X}^{s}_T } (0, K   ) $ (the ball of radius $K$ in $X^{s}_{T}$). Thus if we set    $\Sigma_{T}=G_{d}(K)$, with the previous choice of $K$, the result follows from \eqref{G.D}.
\end{proof}

\begin{proof}[Proof of Theorem \ref{Thm12}]
We introduce 
\begin{equation} \label{NLSH}  
  \left\{
      \begin{aligned}
         & i \frac{ \partial w }{ \partial t } - H w = \pm \cos(2t)^{\frac{d}{2}(p-1)-2}  |w|^{p-1} w, \quad (t,x)\in \R\times \R^{d},
       \\  &  v(0)  =u_{0},
      \end{aligned}
    \right.
\end{equation}
and let $ s \in \big] \frac{d}{2}   - \frac{2}{p-1}  ; \frac{d}{2}    \big[ $, $ T = \frac{\pi}{4} $ and $ 1 \gg \epsilon > 0 $. Thanks to Proposition \ref{pointfixe1}, there exist $ C > 0 $ and $ \kappa > 0 $ such that if $ u_0 \in G_{d}(K)$ for one $ K > 0 $ then for all $ v $,
 \begin{equation} \label{limite}
\bigg\|  \int_0^t  \e^{-i(t-s)H}\Big[ \cos(2s)^{\frac{d}{2}(p-1)-2}      |   \e^{-isH} u_0 + v |^{p-1}   (   \e^{-isH} u_0 + v ) \, \Big]\text{d}s   \bigg\|_{  {X}_T^s }  
 \leq C  T^\kappa   ( K ^p +  \|v\|^p_{  {X}_T^s } ).
\end{equation} 
As in Theorem \ref{letheoreme}, we can choose $  K =   (  \frac{1}{8 C  T^\kappa }  )^{  \frac{1}{p-1} }  $ to obtain, for $ u_0 \in G_{d}(K)$, a unique local solution $ w = \e^{-itH} u_ 0 + v  $ in time interval $ \left] - \frac{\pi}{4} , \frac{\pi}{4} \right[ $ to (\ref{NLSH}) with $ v \in X^1_T $.
\\We set $ u =  \mathscr{L} v  $, then $ u $ is a global solution to (\ref{NLS}). Thanks to  \cite[Propositions 20 and 22]{poiret2}, we obtain that $ u = \e^{it\Delta} u_ 0 + v' $ with $ v' \in X^1_T $.
\\Moreover, thanks to \eqref{limite}, we have that 
\begin{equation*}
 \int_0^t \e^{-i(t-s)H}\Big[ \cos(2s)^{\frac{d}{2}(p-1)-2}      |   \e^{-isH} u_0 + v |^{p-1}   (   \e^{-isH} u_0 + v ) \, \Big]\text{d}s   \in \mathcal{C}^0\big( [-T,T], \H^s(\R^{d}) \big).
\end{equation*} 
Then, there exist $ L \in \H^s $ such that
\begin{equation*}
\underset{ t \rightarrow T }{\lim} \Big\| e^{-itH } \int_0^t  \e^{-isH}\Big[ \cos(2s)^{\frac{d}{2}(p-1)-2}      |   \e^{-isH} u_0 + v |^{p-1}   (   \e^{-isH} u_0 + v ) \, \Big] \text{d}s  -L \Big\|_{\H^s(\R^{d})} =0 .
\end{equation*}
Using  \cite[Lemma 70]{poiret2}, we obtain that 
\begin{equation*}
\underset{ t \rightarrow T }{\lim} \Big\| u(t) - \e^{it\Delta}u_0  - \e^{it\Delta} \left( -i \e^{-iTH} L \right) \Big\|_{H^s(\R^{d})} =0 .
\end{equation*}

Finally to establish Theorem \ref{Thm12}, it suffices to set $ \Sigma = G_{d}(K) $ and to prove that $ \mu \big( u_0 \in G_{d}(K)  \big) > 0 $. We can write
\begin{equation*}
u_0 = \chi \Big( \frac{H}{N} \Big) u_0 + (1- \chi ) \Big( \frac{H}{N} \Big) u_0 := [u_0]_N+[u_0]^N,
\end{equation*}
with $ \chi $ is a truncation function. Using the triangular inequality and the independence, we obtain that
\begin{equation*}
\mu\big( u_0 \in G_{d}(K)   \big) \geq \mu \big(  [u_0]_N \in G_{d}( \frac{K}{2}  )   \big)  \,\mu  \big( [u_0]^N \in G_{d}( \frac{K}{2}  )  \big) .
\end{equation*}
For all $ N $, $ \mu \left(  [u_0]_N \in G_{d}( \frac{K}{2}  )   \right) > 0 $ because the hypothesis (\ref{petit}) is satisfied and thanks to Proposition~\ref{regularite}, we have
\begin{equation*}
 \mu \big(  [u_0]^N \in G_{d}( \frac{K}{2}  )   \big) \geq 1-  C \e^{   -  \frac{c K^2}{ \| [u_0]^N \|^2_{L^2}}}   \underset{N \rightarrow \infty}{\longrightarrow} 1
\end{equation*}
and there exists $ N $ such that $  \mu \big(  [u_0]^N \in G_{d}( \frac{K}{2}  )   \big)  > 0 $.
\end{proof}

 \subsection{Almost sure local well-posedness of the time dependent equation and scattering for NLS}\label{sect32}

%%%%%%%%%%%%%%%%%%%%%%%%%
%%%%%%%%%%%%%%%%%%%%%%%%%
%%%%%%%%%%%%%%%%%%%%%%%%%
 This section is devoted to the proof of Theorem \ref{Thm14}. The strategy is similar to the proof of Theorem \ref{Thm12}: we solve the equation which is mapped by $\mathscr{L}$ to \eqref{NLS} up to time $T=\pi/4$ and we conclude as previouly. The difference here, is that the nonlinear term of the equation we have to solve is singular a time $T=\pi/4$. More precisely, we consider the equation
\begin{equation} \label{NLST}  
  \left\{
      \begin{aligned}
         &  \dis i \frac{ \partial u }{ \partial t } -H u  =   \pm \cos(2t)^{  p-3 } | u|^{p-1} u, \quad (t,x)\in \R\times \R^{2},
       \\  &  u(0)  =u_0,
      \end{aligned}
    \right.
\end{equation}
when $2<p<3$. 

\medskip
Let us first consider the easier case $\s>0$.

 \subsubsection{{\bf Proof of Theorem \ref{Thm14} in the case $\boldsymbol {\s>0}$:}}
 
Let  $\s>0$ and $\mu=\mu_{\gamma}\in \M^{\s}$ and for  $ K   \geq 0 $     and $\eps>0$, define the set $ F_{\s}( K ) $ as 
 \begin{equation*}
F_{\s}(K)=\big\{ w\in \H^{\s}(\R^{2})\,:\;\;   \|w\|_{  \H^{\s}(\R^2) }   \leq K    \;\;\text{and}\;\;   \|  \e^{-itH} w\|_{ L_{[0,2\pi] }^{1/\eps} \W^{  1+\s-\eps ,\infty   } (\R^2)  }\leq K \; \big\}.
\end{equation*}
The parameter $\eps>0$ will be chosen small enough so that we can apply  Proposition \ref{regularite} and get 
 \begin{equation*} 
 \mu\big( \,(F_{\s}(K))^{c}\,\big)\leq  \mu\big( \, \|w\|_{  \H^{\s} }   > K\,  \big)+ \mu\big( \, \|  \e^{-itH} w \|_{ L^{1/\eps}_{ [0,2\pi]} \W^{1+  \s-\eps , \infty }   }>   K\,  \big)\leq C\e^{-\frac{cK^{2}}{\|\gamma\|^{2}_{\H^{\s}}}}.
 \end{equation*}

 The next proposition is the key   in the proof of Theorem~\ref{Thm14} when $\s>0$.

\begin{prop}\ph \label{pointfixe2} Let $\s>0$. There exist $ C > 0 $ and $ \kappa > 0 $ such that if $ u_0 \in F_{\s}(K)$ for one $ K > 0 $ then for any $ v,v_{1},v_{2} \in {X}_T^1 $ and $ 0 < T \leq 1 $,
 \begin{equation}\label{f1}
\bigg\|  \int_0^t  \e^{-i(t-\tau)H}\Big[   \cos(2\tau)^{  p-3 } |  \e^{-i\tau H} u_0 + v |^{p-1}   (   \e^{-i\tau H} u_0 + v ) \Big]\, \text{d}\tau   \bigg\|_{  {X}_T^1 }  
 \leq C  T^\kappa   ( K ^p +  \|v\|^p_{  {X}_T^1 } ),
\end{equation} 
and
 \begin{align}\label{f2}
& \bigg\|  \int_0^t   \e^{-i(t-\tau)H}  \Big[\cos(2\tau)^{  p-3 }   |  \e^{-i\tau H} u_0 + v_1 |^{p-1}   (  \e^{-i\tau H} u_0 + v_1 )\Big]  \, \text{d}\tau  
 \\ & \hspace*{3cm} -  \int_0^t    \e^{-i(t-\tau)H}\Big[  \cos(2\tau)^{  p-3 } |  \e^{-i\tau H} u_0 + v_2 |^{p-1}   (  \e^{-i\tau H} u_0 + v_2 ) \Big] \, \text{d}\tau  \bigg\|_{  {X}_T^1 }  \nonumber\leq
\nonumber\\ & \leq C  T ^\kappa   \|v_1-v_2\|_{ {X}^1_T }  ( K ^{p-1} +  \|v_1\|^{p-1}_{  {X}_T^1 } + \|v_2\|^{p-1}_{  {X}_T^1 } ).\nonumber
\end{align} 
\end{prop}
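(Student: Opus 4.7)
The plan is to adapt the contraction argument from Proposition \ref{pointfixe1}, incorporating the singular time-weight $\cos(2\tau)^{p-3}$. Since $p-3 \in (-1,0)$ when $2 < p < 3$, this weight blows up at $\tau = \pm\pi/4$, but it remains in $L^{q_1}([-T,T])$ for any $q_1 < 1/(3-p)$, and in particular for some $q_1 > 1$. This integrability is what will absorb the singularity, at the cost of pinning down the temporal integrability of the remaining spatial norm.

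First, I would apply the inhomogeneous Strichartz estimate \eqref{Stri} with the dual of the admissible pair $(\infty,2)$ to reduce the $X^1_T$-norm of the Duhamel integral to an $L^1_T \H^1_x$-bound on $\cos(2\tau)^{p-3}|u|^{p-1}u$. Next, I would distribute the half-derivative via a fractional chain rule (available because $z\mapsto |z|^{p-1}z$ is $C^1$ for $p>1$) to obtain a schematic bound of the form
\[
\|H^{1/2}(|u|^{p-1}u)\|_{L^2_x}\lesssim \|u\|_{L^a_x}^{p-1}\,\|H^{1/2}u\|_{L^b_x}, \qquad \frac{p-1}{a}+\frac{1}{b}=\frac{1}{2}.
\]
Finally, I would apply H\"older in time to separate the weight, leading to
\[
\int_{-T}^T|\cos(2\tau)|^{p-3}\|u\|_{L^a_x}^{p-1}\|H^{1/2}u\|_{L^b_x}\,d\tau\leq \|\cos^{p-3}\|_{L^{q_1}_T}\,\|u\|_{L^{A_1}_T L^a_x}^{p-1}\|H^{1/2}u\|_{L^{A_2}_T L^b_x},
\]
with $(p-1)/A_1+1/A_2 = 1/q_2$ and $1/q_1+1/q_2=1$.

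Writing $u=\e^{-i\tau H}u_0+v$, I would then estimate each contribution separately. Factors involving the random linear profile will be controlled by the bound $\|\e^{-i\tau H}u_0\|_{L^{1/\eps}_{[0,2\pi]}\W^{1+\s-\eps,\infty}}\leq K$ built into the definition of $F_\s(K)$, combined with the embedding $\W^{1+\s-\eps,\infty}\subset \W^{1,b}$ and H\"older in time, which yields the required $L^{A_1}_T L^a_x$ and $L^{A_2}_T \W^{1,b}_x$ bounds with an extra $T^\kappa$-factor (provided $\eps$ is taken small enough relative to $\s$). Factors involving $v$ will be handled by Lemma \ref{injection} in dimension $d=2$, which converts $\|v\|_{X^1_T}$ into $L^{A_1}_T L^a_x$ and $L^{A_2}_T \W^{1,b}_x$-bounds, again with a gain of $T^\kappa$ coming from working strictly below Strichartz scaling. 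This gives \eqref{f1}; the Lipschitz estimate \eqref{f2} then follows by the same chain of inequalities applied to the pointwise bound $\big||a|^{p-1}a-|b|^{p-1}b\big|\lesssim (|a|^{p-1}+|b|^{p-1})|a-b|$, which is valid for $p>1$.

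The main obstacle will be the book-keeping of the six exponents $(q_1,q_2,A_1,A_2,a,b)$: the weight forces $q_1<1/(3-p)$, which pushes $q_2$ toward $1/(p-2)$, and the fractional Leibniz and Sobolev relations then compete with the requirement that a positive power of $T$ be extracted in order to close the contraction. This tight window of exponents is precisely what restricts the statement to $d=2$ and $2<p<3$ (the weight ceases to be integrable at $p=2$ and the Sobolev room shrinks in $d\geq 3$), and is also why the more delicate case $\s=0$ is treated separately below.
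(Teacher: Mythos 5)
Your high-level strategy — inhomogeneous Strichartz to reduce to $L^1_T\H^1$, a chain rule to distribute the derivative, then H\"older in time to absorb the singular weight $\cos(2\tau)^{p-3}$ — is the same as the paper's. But the use of a \emph{single} H\"older pairing $(a,b)$ in your schematic bound is a genuine gap. The paper instead uses the explicit identity \eqref{for}, $\nabla(|u|^{p-1}u)=\tfrac{p+1}2|u|^{p-1}\nabla u+\tfrac{p-1}2|u|^{p-3}u^2\nabla\bar u$ (legitimate since $\|\cdot\|_{\H^1}\equiv\|\nabla\cdot\|_{L^2}+\|\la x\ra\cdot\|_{L^2}$), splits $\nabla u=\nabla f+\nabla v$ with $f=\e^{-i\tau H}u_0$, and applies a \emph{different} pairing to each piece: the $\nabla v$ part is estimated by $\|u\|^{p-1}_{L^\infty_x}\|\nabla v\|_{L^2_x}$ (so $b=2$, hitting the $L^\infty_T\H^1$-component of $X^1_T$ at no cost in time-integrability) and the $\nabla f$ part by $\|u\|^{p-1}_{L^{2(p-1)}_x}\|\nabla f\|_{L^\infty_x}$ (so $b=\infty$, using the $\W^{1+\s-\eps,\infty}$ control in $F_\s(K)$ directly).

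With your one-pair scheme, you must control $\|H^{1/2}f\|_{L^b_x}$ and $\|H^{1/2}v\|_{L^{A_2}_TL^b_x}$ with the \emph{same} $b\in(2,\infty)$. Controlling the $f$-factor requires $b$ large, roughly $b>2/(\s-\eps)$ (and the embedding $\W^{1+\s-\eps,\infty}\subset\W^{1,b}$ you invoke is not automatic — it holds only for such large $b$, via the decay of $H^{-(\s-\eps)/2}$). Controlling the $v$-factor via Lemma~\ref{injection} with $s_0=s=1$ requires $2/A_2+2/b>1$, i.e. $A_2<2/(1-\s)$ roughly. Combined with the weight constraint $1/q_2<p-2$ (so that $(p-1)/A_1+1/A_2<p-2$), one is forced into $(1-\s)/2<p-2$, that is $\s>5-2p$, which fails for small $\s>0$ when $p$ is close to $2$. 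Since the proposition asserts the result for \emph{every} $\s>0$ and $2<p<3$, the two-pairing split arising from the pointwise chain rule is not bookkeeping you can defer: it is the mechanism that makes the estimate close, and your proposal as written does not supply it.
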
 
\begin{proof}
We first prove \eqref{f1}. Using the Strichartz inequalities \eqref{Stri}, we obtain
\begin{multline*}
 \bigg\|  \int_0^t   \e^{-i(t-\tau)H}  \Big[\cos(2\tau)^{  p-3 }  |  \e^{-i\tau H} u_0 + v |^{p-1}  (   \e^{-i\tau H} u_0 + v )  \Big]\ \text{d}\tau   \bigg\|_{  {X}_T^1 }  \leq \\
 \begin{aligned}
& \leq  C   \big\|  \cos(2\tau)^{  p-3 }   |  \e^{-i\tau H} u_0 + v |^{p-1}  (   \e^{-i\tau H} u_0 + v )    \big\|_{   L_{[-T,T] }^1 \H^1 (\R^2)  } . 
\end{aligned}
\end{multline*}
We use the formula 
\begin{equation}\label{for}
  \nabla (|u|^{p-1}u)=\frac{p+1}2 |u|^{p-1}\nabla u+\frac{p-1}2 |u|^{p-3}u^{2}\nabla \ov{u}.
\end{equation} 
We denote by $f=\e^{-isH}u_{0}$
\begin{multline*}
 \big\|  \, \nabla  \big(  |  f + v |^{p-1}  (   f + v ) \big)  \, \big\|_{    L^2 (\R^2)   }    \leq \\
 \begin{aligned}
& \leq  C  \big\| \, |f+v|^{p-1} \nabla     (   f + v ) \,  \big\|_{    L^2 (\R^2)   } +C  \big\| \, |f+v|^{p-3}(f+v)^{2} \nabla     (   f + v ) \,  \big\|_{    L^2 (\R^2)    } \\
& \leq  C  \big\| f+v   \big\|^{p-1}_{L^\infty (\R^2)}\big\|  \nabla    v  \big\|_{L^2 (\R^2)}  +C  \big\| f+v   \big\|^{p-1}_{L^{2(p-1)}(\R^2)}\big\|  \nabla    f  \big\|_{L^{\infty} (\R^2)}.
\end{aligned}
\end{multline*}
Therefore
\begin{multline*} 
 \big\|   |  f + v |^{p-1}  (   f + v )    \big\|_{    \H^1 (\R^2)   }    \leq \\
 \leq C   (\big\| f  \big\|^{p-1}_{L^\infty (\R^2)} +\big\| v  \big\|^{p-1}_{L^\infty (\R^2)})\big\|      v  \big\|_{\H^1 (\R^2)}  +C ( \big\| f \big\|^{p-1}_{L^{2(p-1)} (\R^2)}+\big\| v  \big\|^{p-1}_{L^{2(p-1)}(\R^2)})\big\|      f  \big\|_{\W^{1,\infty} (\R^2)}.
\end{multline*}

Now observe that $\|v\|_{L^{\infty}_{[-T,T]}L^{2(p-1)}}\leq \|v\|_{X^{1}_{T}}$ as well as for all $r<+\infty$, $\|v\|_{L^{r}_{[-T,T]}L^{\infty}}\leq \|v\|_{X^{1}_{T}}$. Then for all $q>1$
\begin{multline}\label{thr}
 \big\|   |  f + v |^{p-1}  (   f + v )    \big\|_{ L^{q}_{[-T,T]}   \H^1 (\R^2)   }    \leq \\
  \begin{aligned}
& \leq C  T^{\kappa} \Big[ \big(\big\| f  \big\|^{p-1}_{L^{\infty-}_{[-T,T]}L^\infty (\R^2)} +\big\| v  \big\|^{p-1}_{X^{1}_{T}}\big)\big\|      v  \big\|_{X^{1}_{T}}  +  \big(\big\| f \big\|^{p-1}_{L^{\infty-}_{[-T,T]}L^{2(p-1)} (\R^2)}+\big\| v  \big\|^{p-1}_{X^{1}_{T}}\big)\big\|      f  \big\|_{L^{\infty-}_{[-T,T]}\W^{1,\infty} (\R^2)}\Big]\\
&\leq C  T^\kappa   ( K ^p +  \|v\|^p_{  {X}_T^1 } ).
\end{aligned}
\end{multline}
Choose  $q>1$ so that $q'(3-p)<1$, then we have $\|  \cos(2\tau )^{  p-3 }\|_{L^{q'}_{[-T,T]}} <\infty$, thus from \eqref{thr} and H\"older, we infer
\begin{multline*}
 \big\|  \cos(2\tau)^{  p-3 }  \,    |  f + v |^{p-1}  (   f + v )   \big\|_{   L^1( [-T,T] , \H^1 (\R^2) )   }    \leq \\
 \begin{aligned}
& \leq  C   \|  \cos(2\tau)^{  p-3 }\|_{L^{q'}_{[-T,T]}}    \big\|\,    |  f + v |^{p-1}  (   f + v )   \big\|_{   L_{ [-T,T] }^{q} \H^1 (\R^2)    }  \\
& \leq C  T^\kappa   ( K ^p +  \|v\|^p_{  {X}_T^1 } ).
\end{aligned}
\end{multline*}

For the proof of \eqref{f2} we can proceed similarly. Namely, we use the estimates

\begin{equation}\label{diff}
 \big|\,  |z_1|^{p-1} - |z_2|^{p-1}  \big|\le
C ( |z_1|^{p-2} +  |z_2|^{p-2})  |z_1-z_2|
\end{equation}
and
\begin{equation*}  
 \big|\,  |z_1|^{p-3} z_1^2 -  |z_2|^{p-3} z_2^2  \big|\le
   C ( |z_1|^{p-2} +  |z_2|^{p-2})   |z_1-z_2| , 
\end{equation*}
which are proven in \cite[Remark 2.3]{CFH} together with \eqref{for}.
 \end{proof}

 \subsubsection{{\bf Proof of Theorem \ref{Thm14} in the case ${\boldsymbol{ \s   =0}}$:}}
The strategy of the proof in this case is similar, at the price of some technicalities, since the Leibniz rule \eqref{for} does not hold true for non integer derivatives. Actually, when $\s=0$, we will have to work in $X^{s}_{T}$ for  $s<1$ because  the probabilistic term $\e^{-itH}u_{0}\notin \mathcal{W}^{1,\infty}(\R^{2})$.
 
Moreover, we are not able to obtain a contraction estimate in $X^{s}_{T}$. Therefore, we will do a fixed point in the   space $\big\{ \|v\|_{X^{s}_{T}}\leq K    \big\}$ endowed with the weaker metric induced by $X^{0}_{T}$. We can check that this space is complete. Actually,  by the Banach-Alaoglu theorem, the closed balls of each  component spaces of  $X^{s}_{T}$ is compact for  the $\text{weak}^{\star}$ topology.\medskip

For $0<s<1$, we use the following characterization of the usual $H^{s}(\R^{2})$ norm
\begin{equation}\label{diffsobo}
\|g\|_{H^{s}(\R^{2})}=\Big(   \int_{\R^{2}\times \R^{2}}\frac{|g(x)-g(y)|^{2}}{|x-y|^{2s+2}}\text{d}x\text{d}y   \Big)^{1/2}.
\end{equation}

For  $\mu=\mu_{\gamma}\in \M^{0}$,  $ K   \geq 0 $     and $\eps>0$, define the set $ \wt{F}_{0}( K ) $ as 
 \begin{multline*}
 \wt{F}_{0}( K ) =\big\{ w\in L^{2}(\R^{2})\,:\;\;   \|w\|_{  L^{2}(\R^2) }   \leq K,   \;  \|  \e^{-itH} w\|_{ L_{[0,2\pi] }^{1/\eps} {\W}^{  1-\eps ,\infty  } (\R^2)  }\leq K   \;\;\\\text{and}\;\;   \big \|  (\e^{-itH} w)(x)-(\e^{-itH} w)(y)\big\|_{ L_{t\in [0,2\pi] }^{\infty}   }\leq K|x-y|^{1-\eps} \; \big\}.
\end{multline*}
The next results states that $\wt{F}_{0}(K)$ is a  set with large measure.
 \begin{lemm}\ph\label{lemf0} If $\eps>0$ is small enough
\begin{equation*} 
 \mu\big( \, (\wt{F}_{0}( K ))^{c}\,\big)\leq  C\e^{-\frac{cK^{2}}{\|\gamma\|^{2}_{L^{2}(\R^{2})}}}.
 \end{equation*} 
  \end{lemm}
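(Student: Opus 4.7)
The proof is by handling separately the three defining conditions of $\wt{F}_{0}(K)$ and combining the bounds with a union bound.

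The first condition $\|w\|_{L^{2}(\R^{2})}\leq K$ fails with probability at most $C\e^{-cK^{2}/\|\gamma\|^{2}_{L^{2}}}$ by applying Khinchin (Lemma \ref{Khin}) to $\|\gamma^{\om}\|_{L^{2}}=(\sum_{n}|c_{n}g_{n}|^{2})^{1/2}$ followed by the Markov inequality with optimization $k\sim K^{2}/\|\gamma\|^{2}_{L^{2}}$. The second condition is a direct application of Proposition \ref{regularite} with $d=2$, $s=0$, $q=1/\eps$, $r=+\infty$, $\a=1-\eps<d/2=1$, the factor $(2\pi)^{2\eps}$ being absorbed in the constant.

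The essential new ingredient is the uniform-in-$t$ H\"older estimate. The plan is to upgrade the $L^{q}_{t}$ control provided by Proposition \ref{regularite} to $L^{\infty}_{t}$ via a fractional time Sobolev embedding, exploiting the identity $\pa_{t}\e^{-itH}=-iH\e^{-itH}$, which trades one time derivative against two spatial derivatives. Fix small parameters $\b,\delta>0$ and large $q,r$ satisfying $\b q>1$, $r=2/\delta$ and $2\b+2\delta\leq\eps$. Applying Proposition \ref{regularite} first to $\gamma\in\A_{0}$ with $s=0$ and $\a=1-\delta$, and then to the datum $(-iH)^{\b}\gamma\in\A_{-2\b}$ (which satisfies $\|(-iH)^{\b}\gamma\|_{\H^{-2\b}}=\|\gamma\|_{L^{2}}$ and inherits condition \eqref{condi0} because $\lambda_{n}^{2\b}$ is essentially constant on each block $I(j)$) with $s=-2\b$ and $\a=1-\delta$, we obtain with probability at least $1-C\e^{-cK^{2}/\|\gamma\|^{2}_{L^{2}}}$ the two bounds
\begin{equation*}
\|\e^{-itH}\gamma^{\om}\|_{L^{q}_{[0,2\pi]}\W^{1-\delta,r}_{x}}\leq K,\qquad \|\pa_{t}^{\b}\e^{-itH}\gamma^{\om}\|_{L^{q}_{[0,2\pi]}\W^{1-\delta-2\b,r}_{x}}\leq K.
\end{equation*}
Combining yields $\|\e^{-itH}\gamma^{\om}\|_{W^{\b,q}([0,2\pi];\W^{1-\delta-2\b,r}(\R^{2}))}\leq CK$. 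The one-dimensional Sobolev embedding $W^{\b,q}([0,2\pi])\hookrightarrow L^{\infty}([0,2\pi])$ (valid since $\b q>1$) together with the Morrey embedding $\W^{1-\delta-2\b,r}(\R^{2})\hookrightarrow C^{1-2\delta-2\b}(\R^{2})$ give
\begin{equation*}
\sup_{t\in[0,2\pi]}\sup_{x\neq y}\frac{|(\e^{-itH}\gamma^{\om})(x)-(\e^{-itH}\gamma^{\om})(y)|}{|x-y|^{1-2\delta-2\b}}\leq CK,
\end{equation*}
which implies the third condition of $\wt{F}_{0}(K)$ since $2\delta+2\b\leq\eps$.

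The principal obstacle is precisely the passage from the $L^{q}_{t}$ control of Proposition \ref{regularite} to the $L^{\infty}_{t}$ control required by the third condition. This is made possible by the fact that in dimension $d=2$, Proposition \ref{regularite} provides a gain of almost one derivative in space, leaving a margin of size $\eps$ that can be spent on the $2\b$-derivative cost imposed by using a fractional time derivative to enforce the Sobolev embedding in $t$.
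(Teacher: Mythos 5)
Your proof is correct and takes a genuinely different route from the paper's. The paper treats the H\"older condition pointwise in $(x,y)$: it applies Khinchin directly to the difference $(\e^{-itH}\gamma^\omega)(x)-(\e^{-itH}\gamma^\omega)(y)$, invokes the spectral difference estimate $\sum_{n\in I(j)}|\phi_n(x)-\phi_n(y)|^2\leq C|x-y|^2j$ from [IRT, Lemma~6.1] to produce the explicit factor $|x-y|$, and then takes a time derivative to upgrade $L^q_t$ control to $L^\infty_t$ by one-dimensional interpolation. You instead use Proposition~\ref{regularite} twice as a black box --- once for $\gamma$ and once for $(-iH)^\b\gamma$, which as you correctly check remains in $\A_{-2\b}$ with $\H^{-2\b}$-norm equal to $\|\gamma\|_{L^2}$, since $\lambda_n^{2\b}$ is comparable to a constant on each block $I(j)$ --- and extract the H\"older bound from the resulting $W^{\b,q}_t\W^{1-\delta-2\b,r}_x$ control via the time Sobolev embedding $W^{\b,q}\hookrightarrow L^\infty$ ($\b q>1$) and Morrey in $x$. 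Your route is more portable (no need for the IRT difference estimate, only Proposition~\ref{regularite} plus standard embeddings) and, importantly, your restriction $\b<1/2$ is the numerically safe choice: a full time derivative costs $\lambda_n^2\sim j^2$ against only $\#I(j)^{-1}\sim j^{-1}$ from \eqref{condi0}, leaving a residual factor $j$ that would land on $\|\gamma\|_{\H^1}$ rather than $\|\gamma\|_{L^2}$; the scaling balances exactly at half a derivative, which is consistent with your $2\b+2\delta\leq\eps$ bookkeeping but not with a literal reading of the paper's displayed estimate for $\|\partial_t(\cdot)\|_{L^k_\P}$. The technicalities you elide (the norm equivalence for the vector-valued $W^{\b,q}_t(X)$ with $X=\W^{s,r}$ a UMD space, and controlling $|x-y|\geq 1$ via the $L^\infty_x$ bound so that Morrey gives a global rather than merely local H\"older estimate) are all standard.
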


\begin{proof}
We only have to study the contribution of the Lipschitz term in $ \wt{F}_{0}( K ) $, since the others are controlled by Proposition \ref{regularite}.

We fix $\dis\gamma=\sum_{n=0}^{+\infty}c_{n} \phi_{n}\in \A_{0}$ and denote by $\dis \gamma^{\om}=\sum_{n=0}^{+\infty}g_{n}(\omega)c_{n}\phi_{n}$.  Let $k\geq 1$, then  by definition 
\begin{equation}\label{mu.defi2}
\int_{L^{2}(\R^{2})} \|\e^{-itH}u(x)-\e^{-itH}u(y) \|^{k}_{L^{\infty}_{[0,2\pi]}}\text{d}\mu(u)=\int_{\Omega}\big\| \e^{-itH} \gamma^{\om}(x)-\e^{-itH} \gamma^{\om}(y) \big\|^{k}_{L^{\infty}_{[0,2\pi]}}\text{d}\P(\om).
\end{equation}
We have  $\dis \e^{-itH} \gamma^{\om}(x)-\e^{-itH} \gamma^{\om}(y) =\sum_{n=0}^{+\infty}g_{n}(\omega)c_{n} \e^{-it\lambda_{n}}(\phi_{n}(x)-\phi_{n}(y))$. Then by the Khinchin Lemma~\ref{Khin} we get
 \begin{eqnarray*}
\|\e^{-itH} \gamma^{\om}(x)-\e^{-itH} \gamma^{\om}(y)\|_{L^k_{\P}} &\leq&
 C\sqrt{k}\big(\sum_{n=0}^{+\infty} |c_{n}|^{2}|\phi_{n}(x)-\phi_{n}(y)|^2\big)^{1/2}\\
 &=& C\sqrt{k}\big(\sum_{j=1}^{+\infty} \sum_{n\in I(j)}  |c_{n}|^{2}|\phi_{n}(x)-\phi_{n}(y)|^2\big)^{1/2}, 
 \end{eqnarray*}
Recall that  $k\in I(j)=\big\{n\in \N,\;2j\leq \lambda_{n}< 2(j+1)\big\}$ and that  $\# I(j) \sim c j$. Next, by condition \eqref{condi0}, we deduce that 
\begin{equation*}
\|\e^{-itH} \gamma^{\om}(x)-\e^{-itH} \gamma^{\om}(y)\|_{L^k_{\P}}  \leq C \sqrt{k}\Big(\sum_{j=1}^{+\infty}j^{-1}\big( \sum_{\ell \in I(j)} |c_{\ell }|^{2}\big) \sum_{n\in I(j)}   |   \phi_{n}(x)-\phi_{n}(y)|^2\Big)^{1/2}
\end{equation*}

Now we need the following estimate, which is proven in \cite[Lemma 6.1]{IRT} 
\begin{equation*}
 \sum_{n\in I(j)}|\phi_{n}(y)-\phi_{n}(x)|^{2} \leq C |y-x|^{2} j.
\end{equation*}
 Therefore, we obtain 
 \begin{equation*}
\|\e^{-itH} \gamma^{\om}(x)-\e^{-itH} \gamma^{\om}(y)\|_{L^k_{\P}}  \leq C \sqrt{k}|x-y|\|\gamma\|_{L^{2}(\R^{2})},
 \end{equation*}
 and for $k\geq q$, an integration in time and Minkowski yield
  \begin{equation*}
\|\e^{-itH} \gamma^{\om}(x)-\e^{-itH} \gamma^{\om}(y)\|_{L^k_{\P}L^{q}_{[0,2\pi]}}  \leq C \sqrt{k} |x-y|\|\gamma\|_{L^{2}(\R^{2})}.
 \end{equation*}
 However, since the case $q=+\infty$ is forbidden, the previous estimate is not enough to have a control on the $L^{\infty}_{[0,2\pi]}$-norm. To tackle this issue, we claim that for $k\geq q$ we have
 \begin{equation}\label{dtsobo}
\|\e^{-itH} \gamma^{\om}(x)-\e^{-itH} \gamma^{\om}(y)\|_{L^k_{\P}W^{1, q}_{[0,2\pi]}}  \leq C \sqrt{k}  \|\gamma\|_{L^{2}(\R^{2})}.
 \end{equation}
 Then by a usual Sobolev embedding argument, we get that for all $\eps>0$ (by taking $q\gg 1$ large enough)
  \begin{equation*}
\|\e^{-itH} \gamma^{\om}(x)-\e^{-itH} \gamma^{\om}(y)\|_{L^k_{\P}L^{\infty}_{[0,2\pi]}}  \leq C \sqrt{k} |x-y|^{1-\eps} \|\gamma\|_{L^{2}(\R^{2})},
 \end{equation*}
 which in turn implies (using \eqref{mu.defi2}) that 
\begin{equation*}
\mu\big(\,u\in L^{2}(\R^{2})\,:\, \|\e^{-itH}u(x)-\e^{-itH}u(y) \|_{L^{\infty}_{[0,2\pi]}}>K|x-y|^{1-\eps}\,\big) \leq C\e^{-\frac{cK^{2}}{\|\gamma\|^{2}_{L^{2}}}},
\end{equation*}
as we did in the end of the proof of Proposition \ref{regularite}.

Let us now prove \eqref{dtsobo}: We have 
$$\dis \partial_{t}\big(\e^{-itH} \gamma^{\om}(x)-\e^{-itH} \gamma^{\om}(y) \big)=-i\sum_{n=0}^{+\infty}g_{n}(\omega)\lambda_{n}c_{n} \e^{-it\lambda_{n}}(\phi_{n}(x)-\phi_{n}(y)),$$
and with the previous arguments get 
\begin{eqnarray*}
\big\|  \partial_{t}\big(\e^{-itH} \gamma^{\om}(x)-\e^{-itH} \gamma^{\om}(y) \big)\big\|_{L^k_{\P}}  &\leq& C \sqrt{k}\Big(\sum_{j=1}^{+\infty} \big( \sum_{\ell \in I(j)} |c_{\ell }|^{2}\big) \sum_{n\in I(j)}   |   \phi_{n}(x)-\phi_{n}(y)|^2\Big)^{1/2}\\
&\leq& C \sqrt{k} \|\gamma\|_{L^{2}(\R^{2})},
\end{eqnarray*}
where here we have used the Thangavelu/Karadzhov estimate (see \cite[Lemma 3.5]{PRT1}) 
\begin{equation*}
 \sup_{x\in \R^{2}}\sum_{n\in I(j)}|\phi_{n}(x))|^{2} \leq C   .
\end{equation*}
We conclude the proof of \eqref{dtsobo} by integrating in time and using Minkowski.
\end{proof}

We will also need the following  technical result
 
 \begin{lemm}\ph\label{lembeso}
 Let $u_{0}\in \wt{F}_{0}(K)$ and denote by $f(t,x)=\e^{-it H}u_{0}(x)$. Let  $2\leq q<+\infty$ and $g\in L^{q}\big([-T,T]; L^{2}(\R^{2})\big))$. Then if $\eps>0$ is small enough in the definition of $\wt{F}_{0}(K)$
 \begin{equation}\label{bds}
\Big\| \Big( \int_{\R^{2}\times \R^{2}}\frac{|f(t,x)-f(t,y)|^{2}|g(t,x)|^{2}}{|x-y|^{2s+2}}\text{d}x\text{d}y \Big)^{1/2} \Big\|_{L^{q}_{[-T,T]}}\leq CK\|g\|_{L^{q}_{[-T,T]} L^{2}(\R^{2})}.
 \end{equation}
  \end{lemm}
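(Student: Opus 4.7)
The plan is to establish a pointwise-in-$(t,x)$ bound on the inner spatial integral. Setting
\[
\Phi(t,x):=\int_{\R^2}\frac{|f(t,x)-f(t,y)|^2}{|x-y|^{2s+2}}\,\dd y,
\]
I will show $\Phi(t,x)\le CK^2$ uniformly in $t$ and $x\in\R^2$ (with $s\in(0,1)$ implicit from \eqref{diffsobo}). Granting this, the quantity inside the $L^q_t$-norm on the left-hand side of \eqref{bds} equals $\bigl(\int|g(t,x)|^2\Phi(t,x)\,\dd x\bigr)^{1/2}\le CK\,\|g(t,\cdot)\|_{L^2_x}$, and the conclusion follows by taking the $L^q_t$-norm.

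The critical ingredient is a Morrey-type inequality that upgrades the information in $\wt{F}_0(K)$ into a uniform $L^\infty_{t,x}$ bound $\|f\|_{L^\infty_{t,x}}\le C(\epsilon)K$ on $f=\e^{-itH}u_0$. To see this, fix $t$ and set $M_t:=\|f(t,\cdot)\|_{L^\infty_x}$. Continuity of $f(t,\cdot)$, which follows from the uniform Hölder clause of $\wt{F}_0(K)$, yields $x_0\in\R^2$ with $|f(t,x_0)|\ge M_t/2$. The same Hölder clause then forces $|f(t,\cdot)|\ge M_t/4$ on the ball $B(x_0,r)$ of radius $r=(M_t/(4K))^{1/(1-\epsilon)}$. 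Combining this with the $L^2$-conservation $\|f(t,\cdot)\|_{L^2}=\|u_0\|_{L^2}\le K$ yields
\[
K^2\;\ge\;(M_t/4)^2\,\pi r^2\;=\;c(\epsilon)\,M_t^{\,2+2/(1-\epsilon)}\,K^{-2/(1-\epsilon)},
\]
hence $M_t\le C(\epsilon)K$ uniformly in $t$.

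With this bound at hand, decompose $\Phi=\Phi_{\mathrm{near}}+\Phi_{\mathrm{far}}$ according to $|y-x|\le 1$ or $|y-x|>1$. The Hölder clause of $\wt{F}_0(K)$ gives
\[
\Phi_{\mathrm{near}}(t,x)\le K^2\int_{|y-x|\le 1}|y-x|^{-2s-2\epsilon}\,\dd y\le CK^2,
\]
finite provided $\epsilon$ is small enough that $s+\epsilon<1$ (always attainable since $s<1$). The Morrey bound from the previous paragraph gives
\[
\Phi_{\mathrm{far}}(t,x)\le 4\|f\|_{L^\infty_{t,x}}^2\int_{|y-x|>1}|y-x|^{-2s-2}\,\dd y\le CK^2,
\]
the tail integral being finite because $s>0$. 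Summing the two bounds yields the desired pointwise estimate on $\Phi$, completing the proof.

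The main obstacle is the Morrey-type inequality: the two a priori unrelated conditions in $\wt{F}_0(K)$ (uniform Hölder control in space, and $L^2$-boundedness) combine nontrivially to give a uniform $L^\infty_{t,x}$ bound on $f$, which is precisely what is needed to control the $|f(t,x)|^2|g(t,x)|^2$ contribution hidden in the far-region estimate. Without this step one would only have $\|f(t,\cdot)\|_{L^\infty_x}$ in the mixed norm $L^{1/\epsilon}_t L^\infty_x$, and a simple Hölder-in-$t$ argument would be incompatible with the $L^q_t L^2_x$-norm of $g$ on the right-hand side once $q$ is large.
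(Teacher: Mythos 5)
Your proof is correct. It follows the paper's near/far decomposition (\(|x-y|\le 1\) versus \(|x-y|>1\)), and the near-region estimate --- replacing \(|f(t,x)-f(t,y)|\) by \(K|x-y|^{1-\eps}\) and integrating under the condition \(s+\eps<1\) --- is the same as in the paper. The genuine difference is your treatment of the far region. You derive a uniform bound \(\|f\|_{L^\infty_{t,x}}\le C(\eps)K\) by a Morrey-type interpolation of the two clauses of \(\wt{F}_{0}(K)\): the uniform-in-\(t\) H\"older modulus forces \(|f(t,\cdot)|\ge M_t/4\) on a ball of radius \((M_t/(4K))^{1/(1-\eps)}\) around a near-supremum point, and the conserved \(L^2\)-mass \(\|f(t,\cdot)\|_{L^2}=\|u_0\|_{L^2}\le K\) then caps \(M_t\le C(\eps)K\); your exponent bookkeeping is right. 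The paper instead pulls out \(\|f(t,\cdot)\|_{L^\infty_x}\) and appeals to the mixed-norm bound \(\|f\|_{L^q_{[0,2\pi]}L^\infty_x}\le K\) for \(\eps<1/q\), obtained from the \(\W^{1-\eps,\infty}\)-clause by Jensen. Your closing remark pinpoints why that alone does not close the argument against a general \(g\in L^q_tL^2_x\): H\"oldering the product \(\|f(t,\cdot)\|_{L^\infty_x}\,\|g(t,\cdot)\|_{L^2_x}\) in time using only \(\|f\|_{L^{1/\eps}_tL^\infty_x}\le K\) lands on \(\|g\|_{L^r_tL^2_x}\) for some \(r>q\), which is not granted. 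Your Morrey step supplies exactly the missing pointwise-in-\(t\) control on \(\|f(t,\cdot)\|_{L^\infty_x}\), using only hypotheses already contained in \(\wt{F}_{0}(K)\), and makes the pairing with \(g\in L^q_tL^2_x\) close. In short, same decomposition but a tighter far-region estimate that repairs a small lacuna in the printed justification.
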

 
 \begin{proof}
 We consider such $f,g$, and we split the integral. On the one hand, we use that $f$ is Lipschitz
 \begin{eqnarray*}
 \int_{|x-y|\leq 1}\frac{|f(t,x)-f(t,y)|^{2}|g(t,x)|^{2}}{|x-y|^{2s+2}}\text{d}x\text{d}y &\leq  &  K^{2} \int_{x\in \R^{2}} |g(t,x)|^{2} \big( \int_{y:\; |x-y|\leq 1}\frac{\text{d}y}{|x-y|^{2s+2\eps}}\big)\text{d}x\\
 & \leq &C K^{2 }\|g(t,\cdot)\|^{2}_{  L^{2}(\R^{2})},
 \end{eqnarray*}
 provided that $s+\eps<1$. We take the $L^{q}_{[-T,T]}$-norm, and we see that this contribution is bounded by the r.h.s. of \eqref{bds}.
 
 On the other hand
  \begin{eqnarray*}
 \int_{|x-y|\geq 1}\frac{|f(t,x)-f(t,y)|^{2}|g(t,x)|^{2}}{|x-y|^{2s+2}}\text{d}x\text{d}y &\leq  & C \|f(t,\cdot)\|^{2}_{  L^{\infty}(\R^{2})}   \int_{x\in \R^{2}} |g(t,x)|^{2} \big( \int_{y:\; |x-y|\geq 1}\frac{\text{d}y}{|x-y|^{2s+2}}\big)\text{d}x\\
 & \leq &C   \|f(t,\cdot)\|^{2}_{  L^{\infty}(\R^{2})}  \|g(t,\cdot)\|^{2}_{  L^{2}(\R^{2})},
 \end{eqnarray*}
if $s>0$. Now we take the $L^{q}_{[-T,T]}$-norm, and use the fact that  $\| f\|_{ L_{[0,2\pi] }^{q} {L}^{\infty  } (\R^2)  }\leq K$ if $\eps<1/q$.
 \end{proof}
 
We now state the main estimates of  this paragraph 

\begin{prop}\ph \label{pointfixe3} There exist $ C > 0 $ and $ \kappa > 0 $ such that if $ u_0 \in \wt{F}_{0}(K)$ for one $ K > 0 $ then for any $ v,v_{1},v_{2} \in {X}_T^s $ and $ 0 < T \leq 1 $,
 \begin{equation}\label{f11}
\bigg\|  \int_0^t  \e^{-i(t-\tau)H}\Big[   \cos(2\tau)^{  p-3 } |  \e^{-i\tau H} u_0 + v |^{p-1}   (   \e^{-i\tau H} u_0 + v ) \Big]\, \text{d}\tau   \bigg\|_{  {X}_T^s }  
 \leq C  T^\kappa   ( K ^p +  \|v\|^p_{  {X}_T^s } ),
\end{equation} 
and
 \begin{align}\label{f22}
& \bigg\|  \int_0^t   \e^{-i(t-\tau)H}  \Big[\cos(2\tau)^{  p-3 }   |  \e^{-i\tau H} u_0 + v_1 |^{p-1}   (  \e^{-i\tau H} u_0 + v_1 )\Big]  \, \text{d}\tau  
 \\ & \hspace*{3cm} -  \int_0^t    \e^{-i(t-\tau)H}\Big[  \cos(2\tau)^{  p-3 } |  \e^{-i\tau H} u_0 + v_2 |^{p-1}   (  \e^{-i\tau H} u_0 + v_2 ) \Big] \, \text{d}\tau  \bigg\|_{  {X}_T^0 }  \nonumber\leq
\nonumber\\ & \leq C  T ^\kappa   \|v_1-v_2\|_{ {X}^0_T }  ( K ^{p-1} +  \|v_1\|^{p-1}_{  {X}_T^s } + \|v_2\|^{p-1}_{  {X}_T^s } ).\nonumber
\end{align} 
\end{prop}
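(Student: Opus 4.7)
The plan is to bound the Duhamel term in $X^s_T$ via the inhomogeneous Strichartz estimate \eqref{Stri} with admissible pair $(q,r)=(\infty,2)$, which reduces \eqref{f11} to controlling $\|\cos(2\tau)^{p-3}|f+v|^{p-1}(f+v)\|_{L^1_{[-T,T]}\H^s(\R^2)}$ where $f(\tau,x)=\e^{-i\tau H}u_0(x)$. Since $2<p<3$, I can pick some $q>1$ with $q'(3-p)<1$, so that H\"older in time separates the (locally integrable) singular factor $\cos(2\tau)^{p-3}$ and reduces the problem to an $L^q_{[-T,T]}\H^s(\R^2)$ bound on $|u|^{p-1}u$ with $u=f+v$.

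To control the $\H^s$-norm of $|u|^{p-1}u$ for some $s\in(0,1)$ close enough to $1$, where the Leibniz rule \eqref{for} breaks down, I would use the equivalence of $\|\cdot\|_{\H^s}$ with $\|\cdot\|_{L^2}+\|\cdot\|_{H^s}$, combined with the Gagliardo characterisation \eqref{diffsobo} and the pointwise inequality \eqref{diff}. The $L^2$ part reduces to $\|u\|_{L^{2p}}^p$, which is controlled through the bounds on $f$ from $\wt F_0(K)$ and a Strichartz embedding for $v\in X^s_T$. For the Gagliardo semi-norm, \eqref{diff} and the decomposition $u(x)-u(y)=(f(x)-f(y))+(v(x)-v(y))$ split the integrand symmetrically into pieces of the form $\int\!\!\int\frac{|u(x)|^{2(p-1)}|f(x)-f(y)|^2}{|x-y|^{2s+2}}\,dxdy$ and $\int\!\!\int\frac{|u(x)|^{2(p-1)}|v(x)-v(y)|^2}{|x-y|^{2s+2}}\,dxdy$.

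The $f$-piece is exactly what Lemma \ref{lembeso} is designed for, applied with $g=|u|^{p-1}$; after integration in time, $\|g\|_{L^q_T L^2}=\|u\|^{p-1}_{L^{q(p-1)}_T L^{2(p-1)}_x}$ is controlled by $K^{p-1}+\|v\|^{p-1}_{X^s_T}$ using the bounds on $f$ from $\wt F_0(K)$ and a Strichartz/Sobolev embedding for $v$. The $v$-piece is easier: factoring out $\|u(\tau,\cdot)\|_{L^\infty(\R^2)}^{2(p-1)}$ leaves the quantity $\|v(\tau,\cdot)\|_{H^s}^2$, and a further Strichartz embedding for $\|v\|_{L^\infty}$ (where losing $\eps$ is harmless) yields \eqref{f11}. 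For the contraction estimate \eqref{f22}, one works only in $X^0_T$: Strichartz reduces the problem to the $L^1_T L^2_x$-norm of the difference of the nonlinearities, and \eqref{diff} together with the identity $u_1-u_2=v_1-v_2$ gives the claim via H\"older, with the same treatment of $\cos(2\tau)^{p-3}$ as in the first step.

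The main obstacle is the $f$-contribution to the Gagliardo semi-norm: since $f$ is only H\"older with exponent $1-\eps$ and not Lipschitz, no standard product-rule argument applies. Lemma \ref{lembeso} bypasses this by splitting the integration domain into $|x-y|\le 1$ (where the random H\"older control from $\wt F_0(K)$ makes the integral convergent as long as $s+\eps<1$) and $|x-y|\ge 1$ (where the $L^\infty$-bound on $\e^{-itH}u_0$ coming from $\wt F_0(K)$ suffices). Once this is in place, the positive power $T^\kappa$ stated in the proposition emerges mechanically from the H\"older exponents and Strichartz embeddings.
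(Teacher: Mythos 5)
Your proposal matches the paper's proof essentially step for step: inhomogeneous Strichartz plus H\"older in time to absorb $\cos(2\tau)^{p-3}$, then the Gagliardo characterisation \eqref{diffsobo} with \eqref{diff} to split the $H^s$-seminorm of $|f+v|^{p-1}(f+v)$ into an $f$-piece handled by Lemma~\ref{lembeso} (with $g=|u|^{p-1}$) and a $v$-piece handled by the Strichartz/Sobolev embeddings \eqref{inj}, then the analogous $L^2$-based argument for \eqref{f22}. The only small imprecision is that the equivalent norm on $\H^s$ is $\|\<x\>^s\cdot\|_{L^2}+\|\cdot\|_{H^s}$ rather than $\|\cdot\|_{L^2}+\|\cdot\|_{H^s}$, but the weighted term is the routine one and its treatment is the same as in the paper.
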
 
\begin{proof}
Let $u_{0}\in \wt{F}_{0}(K)$ and set $f=\e^{-is H}u_{0}$.   Let $2<p<3$, then there exists $q\gg1$ so that  $q'(3-p)<1$, which in turn implies $\|  \cos(2s)^{  p-3 }\|_{L^{q'}_{[-T,T]}} \leq C T^{\kappa}$. Next, if $s<1$ is large enough we have by Sobolev
\begin{equation}\label{inj}
\|v\|_{L^{\infty}_{[-T,T]}L^{2(p-1)}(\R^2)}\leq \|v\|_{X^{s}_{T}} \qquad \text{and}\qquad \|v\|_{L^{q(p-1)}_{[-T,T]}L^{\infty}(\R^2)}\leq \|v\|_{X^{s}_{T}}.
\end{equation}
 
$\bullet$ We  prove \eqref{f11}.  From Strichartz and H\"older, we get
 \begin{multline}\label{fir}
 \bigg\|  \int_0^t   \e^{-i(t-s)H}  \Big[\cos(2s)^{  p-3 }  |  f + v |^{p-1}  (   f + v )  \Big]\ \text{d}s   \bigg\|_{  {X}_T^s }  \leq \\
 \begin{aligned}
& \leq  C   \big\|  \cos(2s)^{  p-3 }   |  f + v |^{p-1}  (   f + v )    \big\|_{   L_{[-T,T] }^1 \H^s (\R^2)  }   \\
& \leq  C   \|  \cos(2s)^{  p-3 }\|_{L^{q'}_{[-T,T]}}    \big\|\,    |  f + v |^{p-1}  (   f + v )   \big\|_{   L_{ [-T,T] }^{q} \H^s (\R^2)    }   \\
& \leq  CT^{\kappa}     \big\|\,    |  f + v |^{p-1}  (   f + v )   \big\|_{   L_{ [-T,T] }^{q} \H^s (\R^2)    }.
\end{aligned}
\end{multline}

By using the characterization \eqref{diffsobo}, we will prove that 
\begin{equation}\label{322}
  \big\|\,    |  f + v |^{p-1}  (   f + v )   \big\|_{   L_{ [-T,T] }^{q} \H^s (\R^2)    }\leq  C      ( K ^p +  \|v\|^p_{  {X}_T^s } ).
\end{equation}
The term $ \big\| \<x\>^{s}  |  f + v |^{p-1}  (   f + v )    \big\|_{  L_{ [-T,T] }^{q}  L^{2} (\R^2)   }$ is easily controlled, thus we only detail the contribution of the $H^{s}$ norm.
With \eqref{diff}, it is easy to check that for all $x,y\in \R^{2}$
 \begin{multline*}
\big||f+v|^{p-1}(f+v)(x)- |f+v|^{p-1}(f+v)(y)|\big|   \leq \\
 \leq  C  |v(x) -v(y)|   \big( |v(x)|^{p-1}+  |v(y)|^{p-1}+ |f(x)|^{p-1}+ |f(y)|^{p-1}  \big)\\
+ C    |f(x) -f(y)|     \big( |v(x)|^{p-1}+  |v(y)|^{p-1}+ |f(x)|^{p-1}+ |f(y)|^{p-1} \big).
\end{multline*}
By \eqref{inj} the contribution in $ L_{ [-T,T] }^{q} H^s (\R^2)$  of the first term in the previous expression is 
\begin{equation*}
   \leq C    \big(\big\| f  \big\|^{p-1}_{L^{q(p-1)}_{[-T,T]}L^\infty (\R^2)} +
\big\| v  \big\|^{p-1}_{L^{q(p-1)}_{[-T,T]}L^\infty (\R^2)}\big)\big\|      v  \big\|_{X^{s}_{T}}   \leq C  \big(K^{p-1} +
\big\| v  \big\|^{p-1}_{X^{s}_{T}}\big)\big\|      v  \big\|_{X^{s}_{T}}.
\end{equation*}
To bound the second term, we apply Lemma \ref{lembeso}, which gives a contribution 
\begin{equation*}
\leq  \big(\big\| f  \big\|^{p-1}_{L^{q(p-1)}_{[-T,T]}L^{2(p-1)} (\R^2)} +
\big\| v  \big\|^{p-1}_{L^{q(p-1)}_{[-T,T]}L^{2(p-1)} (\R^2)}\big) K   \leq  C (K^{p-1}+\big\| v  \big\|^{p-1}_{X^{s}_{T}}) K,
 \end{equation*}
which concludes the proof of \eqref{322}.     \ligne

$\bullet$ The proof of \eqref{f22} is in the same spirit, and even easier. We do not write the details.
 \end{proof}

Thanks to the estimates of Proposition \ref{pointfixe3}, for $K>0$ small enough (see the proof of Theorem~\ref{Thm12} for more details) we are able to construct  a unique solution $v\in \mathcal{C}\big([-\pi/4,\pi/4]; L^{2}(\R^{2})\big)$ such that $v\in L^{\infty}\big({[-\pi/4,\pi/4]};\H^{s}(\R^{2})\big)$. By interpolation we deduce that $v\in \mathcal{C}\big({[-\pi/4,\pi/4]};\H^{s'}(\R^{2})\big)$ for all $s'<s$. The end of the proof of Theorem \ref{Thm14} is   similar to the proof of Theorem \ref{Thm12}, using here Lemma~\ref{lemf0}.

%%%%%%%%%%%%%%%%%%%%%%%%%
%%%%%%%%%%%%%%%%%%%%%%%%%
%%%%%%%%%%%%%%%%%%%%%%%%%
\section{Global well-posedness for the cubic equation}\label{Sect4}

%%%%%%%%%%%%%%%%%%%%%%%%%
%%%%%%%%%%%%%%%%%%%%%%%%%
%%%%%%%%%%%%%%%%%%%%%%%%%
\subsection{The case of  dimension $\boldsymbol{ d=3}$} We now turn to the proof of Theorem \ref{theo3D}, which is obtained thanks to the high/low frequency decomposition method of Bourgain  \cite[page 84]{Bou}.\ligne 

Let  $0\leq s< 1$ and fix $\mu=\mu_{\gamma}\in \M^{s}$. For  $ K   \geq 0 $     define the set $ F_{s}( K ) $ as 
 \begin{equation*}
F_{s}(K)=\big\{ w\in \H^{s}(\R^{3})\,:\;   \|w\|_{  \H^{s}(\R^3) }   \leq K, \;   \|w\|_{  L^{4}(\R^3) }\leq K   \;\;\text{and}\;\;   \|  \e^{-itH} w\|_{ L_{ [0,2\pi]}^{1/\eps}\W^{  3/2+s-\eps ,\infty   } (\R^3)  }\leq K \; \big\}.
\end{equation*}
 Then  by Proposition \ref{regularite},  
 \begin{multline}\label{GD2}
 \mu\big( \,(F_{s}(K))^{c}\,\big)\leq  \\
 \leq \mu\big( \, \|w\|_{  \H^{s} }   > K\,  \big)+ \mu\big( \, \|w\|_{  L^{4} }   > K\,  \big)+ \mu\big( \, \|  \e^{-itH} w \|_{ L^{1/\eps}_{ [0,2\pi]} \W^{3/2+  s-\eps , \infty }   }>   K\,  \big)\leq C\e^{-\frac{cK^{2}}{\|\gamma\|^{2}_{\H^{s}}}}.
 \end{multline}
 
 Now we define a smooth version of the usual spectral projector. Let $\chi\in \mathcal{C}_{0}^{\infty}(-1,1)$,  so that $0\leq \chi\leq 1$, with $\chi=1$ on $[-\frac12,\frac12]$. We define  the operators $S_{N}=\chi\big(\frac{H}{N^{2}}\big)$ as 
 
\begin{equation*} 
S_{N}\big(\sum_{n=0}^{+\infty}c_n \phi_{n}\big)=\sum_{n=0}^{+\infty}\chi\big(\frac{\lambda_{n}}{N^{2}}\big)c_n \phi_{n},
\end{equation*}
and we write
\begin{equation*}
v_{N}=S_{N}v,\quad v^{N}=(1-S_{N})v.
\end{equation*}
It is clear that for any $\s\geq 0$ we have $\|S_{N}\|_{\H^{\s}\to \H^{\s}}=1$. Moreover, by \cite[Proposition 4.1]{BTT}, for all $1\leq r\leq +\infty$, $\|S_{N}\|_{L^{r}\to L^{r}}\leq C$, uniformly in $N\geq 1$.

It is straightforward to check that 
\begin{equation}\label{hf}
\|v_{N}\|_{\H^{1}}\leq N^{1-s}\|v\|_{\H^{s}},\quad \|v^{N} \|_{L^{2}}\leq N^{-s}\|v\|_{\H^{s}}.
\end{equation}
Next, let $u_{0}\in F_{s}(N^{\eps})$. By definition of $F_{s}(N^{\eps})$ and \eqref{hf}, $\|u_{0,N}\|_{\H^{1}}\leq N^{1-s} \|u_{0}\|_{\H^{s}}\leq N^{1-s+\eps}$. The nonlinear term of the energy can be controlled by the quadratic term. Indeed
\begin{equation*}
\|u_{0,N}\|^{4}_{L^{4}}\leq CN^{\eps} \leq  N^{2(1-s+\eps)},
\end{equation*}
and thus
\begin{equation}\label{c0}
E(u_{0,N})\leq 2 N^{2(1-s+\eps)}.
\end{equation}
We also have
\begin{equation*}
\|u_{0,N}\|_{L^{2}}\leq \|u_{0}\|_{\H^{s}}\leq   N^{\eps}.
\end{equation*}

 For a nice description of   the stochastic version of the low-high frequency decomposition method we use here, we refer to the introduction of \cite{CO}. To begin with,  we look for a solution $u$ to \eqref{NLS3D} of the form $u=u^{1}+v^{1}$, where 
$u^{1}$ is solution to 
 \begin{equation} \label{NLS.approx}  
  \left\{
      \begin{aligned}
         & i \frac{ \partial u^{1} }{ \partial t } - H u^{1} =   |u^{1}|^{2} u^{1}, \quad (t,x)\in \R\times \R^{3},
       \\  &  u^{1}(0)  =u_{0,N},
      \end{aligned}
    \right.
\end{equation}
and where $v^{1}=\e^{-itH}u^{N}_{0}+w^{1}$ satisfies
 \begin{equation} \label{fluctu}  
  \left\{
      \begin{aligned}
         & i \frac{ \partial w^{1} }{ \partial t } - H w^{1} =   |w^{1}+\e^{-itH}u^{N}_{0}+u^{1}|^{2} \big(w^{1}+\e^{-itH}u^{N}_{0}+u^{1}\big)-|u^{1}|^{2} u^{1}, \quad (t,x)\in \R\times \R^{3},
       \\  &  w^{1}(0)  =0.
      \end{aligned}
    \right.
\end{equation}

Since equation \eqref{NLS.approx} is $\H^{1}-$subcritical, by the usual deterministic arguments, there exists a unique global solution $u^{1}\in \mathcal{C}\big(\R,\H^{1}(\R^{3})\big)$.

We now turn to \eqref{fluctu}, for which we have the next local existence result. 

\begin{prop}\ph\label{prop.fluctu}
Let $0<s<1$ and $\mu=\mu_{\gamma}\in \M^{s}$. Set $T=N^{-4(1-s)-\eps}$ with $\eps>0$. Assume that $E(u^{1})\leq 4N^{2(1-s+\eps)}$ and $\|u^{1}\|_{L^{\infty}_{[0,T]}L^{2}}\leq 2N^{\eps}$. Then 
\begin{enumerate}[(i)]
\item There exists a set $\Sigma^{1}_{T}\subset \H^{s}$ which only depends on $T$ so that 
$$\mu(\Sigma^{1}_{T})\geq 1-C\exp\big(-cT^{-\delta}\|\gamma\|^{-2}_{\H^{s}(\R^{3})}\big),$$
with some $\delta>0$.
\item For all $u_{0}\in \Sigma^{1}_{T}$ there exists a unique solution $w^{1} \in \mathcal{C}\big([0,T],\H^{1}(\R^{3})\big)$ to equation \eqref{fluctu} which satisfies the bounds
\begin{equation}\label{5.10}
\|w^{1}\|_{L^{\infty}_{[0,T]}\H^{1}}\leq CN^{\beta(s)+c\eps},
\end{equation}
with 
 \begin{equation}\label{beta} 
\beta(s)=\left\{\begin{array}{ll} 
-5/2,\quad &\text{if} \quad 0\leq s\leq 1/2, \\[6pt]  
2s-7/2,  &\text{if} \quad 1/2\leq s\leq 1,
\end{array} \right.
\end{equation}
and
\begin{equation} \label{5.11}
\|w^{1}\|_{L^{\infty}_{[0,T]}L^{2}}\leq CN^{-9/2+2s+c\eps}.
\end{equation}
\end{enumerate}
\end{prop}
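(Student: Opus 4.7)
The plan is to adapt the fixed-point strategy of Theorem~\ref{letheoreme} to the high-low setting, treating $u^{1}$ as a fixed background satisfying the given a priori bounds and solving for $w^{1}$ by Duhamel. Set $f(t,\cdot)=\e^{-itH}u_{0}^{N}$. Since $u_{0}^{N}$ is supported on frequencies $\gtrsim N$ one has $\|u_{0}^{N}\|_{L^{2}}\leq N^{-s}\|u_{0}\|_{\H^{s}}$, so applying Proposition~\ref{regularite} with regularity index $0$ to the high-frequency randomisation, for any admissible pair $(q,r)$ with associated gain $\alpha$,
\begin{equation*}
\mu\big(\|f\|_{L^{q}_{T}\W^{\alpha,r}}>K\big)\leq C\exp\big(-cK^{2}N^{2s}/(T^{2/q}\|\gamma\|^{2}_{\H^{s}})\big).
\end{equation*}
Define $\Sigma^{1}_{T}$ as the intersection, over a finite family of admissible exponents, of the events $\{\|f\|_{L^{q}_{T}\W^{\alpha,r}}\leq T^{1/q}N^{-s+c\eps}\}$. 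Combined with $T=N^{-4(1-s)-\eps}$, this gives (i) with an appropriate $\delta>0$.

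\textbf{Duhamel and splitting of the nonlinearity.} Writing
\begin{equation*}
w^{1}(t)=-i\int_{0}^{t}\e^{-i(t-\tau)H}\mathcal{N}(\tau)\,\dd \tau,\quad \mathcal{N}=|w^{1}+f+u^{1}|^{2}(w^{1}+f+u^{1})-|u^{1}|^{2}u^{1},
\end{equation*}
the inhomogeneous Strichartz inequality~\eqref{Stri} yields $\|w^{1}\|_{X^{1}_{T}}\leq C\|\mathcal{N}\|_{L^{1}_{T}\H^{1}}$ and similarly $\|w^{1}\|_{X^{0}_{T}}\leq C\|\mathcal{N}\|_{L^{1}_{T}L^{2}}$. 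I would split $\mathcal{N}=\mathcal{S}+\mathcal{D}$, where
\begin{equation*}
\mathcal{S}=|f+u^{1}|^{2}(f+u^{1})-|u^{1}|^{2}u^{1}
\end{equation*}
is the source term, every monomial of which contains at least one factor of $f$, while $\mathcal{D}$ collects the remaining cubic terms with at least one factor of $w^{1}$.

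\textbf{Multilinear estimates and fixed point.} The trilinear pieces are bounded in $L^{1}_{T}\H^{1}$ and $L^{1}_{T}L^{2}$ via H\"older in $L^{q}_{T}L^{r}_{x}$, combining: (i) stochastic bounds on $f$ from $\Sigma^{1}_{T}$, which yield a gain of essentially $3/2$ derivatives together with an extra factor $N^{-s+c\eps}$; (ii) energy bounds on $u^{1}$, namely $\|u^{1}\|_{L^{\infty}_{T}\H^{1}}\lesssim N^{1-s+\eps}$, $\|u^{1}\|_{L^{\infty}_{T}L^{4}}\lesssim N^{(1-s+\eps)/2}$, and $\|u^{1}\|_{L^{\infty}_{T}L^{2}}\leq 2N^{\eps}$; and (iii) Strichartz bounds from the $\H^{1}$-subcritical cubic NLS theory in $\R^{3}$, giving $\|u^{1}\|_{X^{1}_{T}}\lesssim N^{1-s+c\eps}$ on $T\leq 1$. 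A term-by-term analysis of $\mathcal{S}$ yields $\|\mathcal{S}\|_{L^{1}_{T}\H^{1}}\leq C N^{\beta(s)+c\eps}$, while $\|\mathcal{D}\|_{L^{1}_{T}\H^{1}}\leq C T^{\kappa}(N^{c\eps}+\|w^{1}\|_{X^{1}_{T}}^{2})\|w^{1}\|_{X^{1}_{T}}$ for some $\kappa>0$. Setting $R\sim N^{\beta(s)+c\eps}$ and invoking that $T$ is a negative power of $N$, the map $w^{1}\mapsto -i\int_{0}^{t}\e^{-i(t-\tau)H}\mathcal{N}(w^{1})\,\dd\tau$ becomes a contraction on $\{w^{1}\in X^{1}_{T}\,:\,\|w^{1}\|_{X^{1}_{T}}\leq R\}$, producing~\eqref{5.10}; repeating the argument at the $L^{2}$ level with the same multilinear estimates (but without the $H^{s/2}$ derivative on the source) yields~\eqref{5.11}.

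\textbf{Main obstacle.} The technical heart is establishing the multilinear estimates that produce the precise exponent $\beta(s)$ in~\eqref{beta}. The piecewise form with a break at $s=1/2$ reflects that different monomials of $\mathcal{S}$ dominate in different regimes: for small $s$, the worst contribution comes from a term of shape $u^{1}\cdot u^{1}\cdot f$ where $u^{1}$ is controlled in energy/$L^{4}$ norms and $f$ uses the full stochastic gain, producing the constant value $-5/2$; for $s$ close to $1$, the worst contribution involves a Strichartz norm of $\nabla u^{1}$ which scales as $N^{1-s}$, producing $2s-7/2$. The overall loss must be absorbed by $T^{\kappa}$ with $T=N^{-4(1-s)-\eps}$, so the delicate accounting is to balance these polynomial powers of $N$ and $T$ for each monomial and take the pointwise worst exponent over the finitely many cases to arrive at~\eqref{beta}.
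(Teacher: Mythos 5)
Your proposal follows the same overall strategy as the paper: define a good event $\Sigma^1_T$ via large-deviation bounds for $\e^{-itH}u_0^N$, apply Duhamel and inhomogeneous Strichartz to reduce to multilinear estimates on the nonlinearity, and close a fixed point in $X^1_T$, then repeat at the $L^2$ level for~\eqref{5.11}. That matches the paper. Two remarks. First, a cosmetic difference: the paper defines $\Sigma^1_T=F_s(N^\eps)$ via conditions on the \emph{full} $u_0$ (namely $\|u_0\|_{\H^s}\leq N^\eps$, $\|u_0\|_{L^4}\leq N^\eps$, and $\|\e^{-itH}u_0\|_{L^{1/\eps}_{[0,2\pi]}\W^{3/2+s-\eps,\infty}}\leq N^\eps$), and then deduces the bounds on $f=\e^{-itH}u_0^N$ through the spectral cut-off; your version phrases $\Sigma^1_T$ directly in terms of $f$ with the $N^{-s}$ gain built into the threshold, which is morally equivalent, but you should still carry along the $\|u_0\|_{\H^s}$ and $\|u_0\|_{L^4}$ controls — they are what make the hypotheses $E(u^1)\leq 4N^{2(1-s+\eps)}$, $\|u^1\|_{L^\infty_T L^2}\leq 2N^\eps$ hold at step one and propagate through the iteration (cf.\ the paper's inequalities~\eqref{c0} and~\eqref{hf}). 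Second, and more substantively, your ``main obstacle'' paragraph mislocates the origin of the two branches of $\beta(s)$. In the paper the extremal source term $fu^2$ is split via the Leibniz rule into $fuH^{1/2}u$ and $u^2H^{1/2}f$: the constant value $-5/2$ comes from $\|fuH^{1/2}u\|_{L^{4/3}_T L^{3/2}}$, estimated with $\|u\|_{L^\infty_T\H^1}\|u\|_{L^\infty_T L^6}\|f\|_{L^{\infty-}_T L^\infty}$ and $T^{3/4}$ (so the derivative falls on $u^1$, not on $f$, and the two factors of $u^1$ are measured in $\H^1/L^6$, not $L^4$); the branch $2s-7/2$ comes from $\|u^2H^{1/2}f\|_{L^1_T L^2}$ estimated with $\|u\|_{L^\infty_T L^4}^2\|H^{1/2}f\|_{L^{\infty-}_T L^\infty}$ and $T^1$ (so the derivative falls on $f$, not on $\nabla u^1$). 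You essentially swapped the roles. This does not invalidate your scheme since you never carry out the estimates, but the description as written would mislead you if you tried to verify~\eqref{beta}; the actual bookkeeping in the paper is the part of the proof that requires care.
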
 

\begin{proof}
 In the next lines, we write $C^{a+}=C^{a+b\eps}$, for some absolute quantity $b>0$.
Since $d=3$, for $T>0$, we define the space $X^{1}_{T}=L^{\infty} \big([0,T]; \H^{1}(\R^{3})\big)\bigcap L^{2}\big([0,T]; \W^{1,6}(\R^{3})\big)$. Let $\eps>0$, and define $\Sigma^{1}_{T}=F_{s}(N^{\eps})$. By \eqref{GD2} and the choice $T=N^{-4(1-s)-\eps}$, the set $\Sigma^{1}_{T}$ satisfies $(i)$. 

Let $u_{0}\in \Sigma^{1}_{T}$. To simplify the notations in the proof, we write $w=w^{1}$, $u=u^{1}$ and $f=\e^{-itH}u^{N}_{0}$. We define the map
\begin{equation}\label{mapL}
L ( w) =    \mp i \int_0^t   \e^{-i(t-s)H}\big( |   f +u+ w |^{2} (    f +u+ w ) -|u|^{2}u\big)(s) \text{d}s.
\end{equation}
First we prove \eqref{5.10}. By Strichartz \eqref{Stri}
\begin{equation}\label{rh}
\|L(w)\|_{X^{1}_{T}}\leq C \big\||   f +u+ w |^{2} (    f +u+ w ) -|u|^{2}u\big\|_{L^{1}_{T}\H^{1}+L^{2}\W^{1,6/5}}.
\end{equation}
By estimating the contribution of every term, we now prove that 
\begin{equation}\label{lundi}
\|L(w)\|_{X^{1}_{T}}\leq CN^{\beta(s)+}+N^{0-}\|w\|_{X^{1}_{T}}+N^{-2(1-s)+}\|w\|^{3}_{X^{1}_{T}},
\end{equation}
where $\beta(s)< (1-s)$ is as in the statement. It is enough to prove that $L$ maps a ball   of size $CN^{\beta(s)+}$ into itself, for times $T=N^{-4(1-s)-\eps}$. With similar arguments can show that $L$ is a contraction (we do not write the details) and get $w$ which satisfies \eqref{5.10}.\ligne

 Observe that the complex conjugation is harmless with respect to the considered norms, thus we can forget it. By definition of $\Sigma^{1}_{T}=F_{s}(N^{\eps})$ and \eqref{hf} we have the estimates which will be used in the sequel: for all $\s<3/2$
\begin{equation}\label{512}
\|f\|_{L^{\infty}_{T}L^{2}}\leq CN^{-s+\eps},\quad \|H^{\s/2}f\|_{L^{\infty-}_{T}L^{\infty}}\leq CN^{\s-3/2-s+2\eps}.
\end{equation}
Let us detail the proof of the second estimate. 
\begin{eqnarray*}
 \|H^{\s/2}f\|_{L^{\infty-}_{T}L^{\infty}}&=&N^{\s} \|\big(\frac{H}{N^{2}}\big)^{\s/2} \big(1-\chi\big(\frac{H}{N^{2}}\big)\big)\e^{-itH}u_{0}\|_{L^{\infty-}_{T}L^{\infty}}\\
 &\leq&C N^{\s} \|\big(\frac{H}{N^{2}}\big)^{(3/2+s-\eps)/2} \big(1-\chi\big(\frac{H}{N^{2}}\big)\big)\e^{-itH}u_{0}\|_{L^{\infty-}_{T}L^{\infty}}\\
  &\leq &C N^{\s-3/2-s+\eps} \|\e^{-itH}u_{0}\|_{L^{\infty-}_{T}\W^{3/2+s-\eps,\infty}}\\
    &\leq &C N^{\s-3/2-s+2\eps},
\end{eqnarray*}
where we have used that $x^{\s/2}(1-\chi(x))\leq C x^{(3/2+s-\eps)/2}(1-\chi(x))$.

Observe also that by assumption
\begin{equation*} 
\|u\|_{L^{\infty}_{T}L^{2}}\leq CN^{\eps}  ,\quad\|u\|_{L^{\infty}_{T}\H^{1}}\leq CN^{1-s+\eps},\quad \|u\|_{L^{\infty}_{T}L^{4}}\leq CN^{(1-s+\eps)/2}.
\end{equation*}
~

We now estimate each term in the r.h.s. of \eqref{rh}.\\
$\bullet$ Source terms:  Observe that $L^{4/3}_{T}\W^{1,3/2}\subset L^{1}_{T}\H^{1}+L^{2}\W^{1,6/5}$, then by H\"older and \eqref{512}
\begin{eqnarray*}
\|fu^{2}\|_{L^{1}_{T}\H^{1}+L^{2}\W^{1,6/5}}&\leq& C \| f uH^{1/2}u\|_{L^{4/3}_{T}L^{3/2}}+ C\|u^{2}H^{1/2}f\|_{L^{1}_{T}L^{2}} \nonumber \\
&\leq& C T^{3/4-}\|u\|_{L^{\infty}_{T}\H^{1}}\|u\|_{L^{\infty}_{T}L^{6}}\|f\|_{L^{\infty-}_{T}L^{\infty}}+   C T^{1-}\|u\|^{2}_{L^{\infty}_{T}L^{4}} \|H^{1/2}f\|_{L^{\infty-}_{T}L^{\infty}}\nonumber\\
&\leq	 &CN^{-5/2+}+CN^{-7/2+2s+}\leq CN^{\beta(s)+},
\end{eqnarray*}
where we have set $\beta(s)=\max(-5/2,-7/2+2s)$ which is precisely \eqref{beta}.  Similarly,
\begin{eqnarray*}
\|f^{2}u\|_{L^{1}_{T}\H^{1}}&\leq& C \|f^{2}H^{1/2}u\|_{L^{1}_{T}L^{2}}+ C\|ufH^{1/2}f\|_{L^{1}_{T}L^{2}} \nonumber \\
&\leq& C T^{1-}\|u\|_{L^{\infty}_{T}\H^{1}}\|f\|^{2}_{L^{\infty-}_{T}L^{\infty}}+   C T^{1-}\|u\|_{L^{\infty}_{T}L^{2}}\|f\|_{L^{\infty-}_{T}L^{\infty}} \|H^{1/2}f\|_{L^{\infty-}_{T}L^{\infty}}\nonumber\\
&\leq	 &CT^{1-} N^{-2-3s+}+CT^{1-}N^{-2-2s+}\leq CN^{-6+2s+}\leq CN^{\beta(s)+}.
\end{eqnarray*}
Finally,
\begin{eqnarray*}
\|f^{3}\|_{L^{1}_{T}\H^{1}}&\leq& C \|f^{2}H^{1/2}f\|_{L^{1}_{T}L^{2}}  \leq CT^{1-} \|H^{1/2}f\|_{L^{\infty-}_{T}L^{\infty}}\|f\|_{L^{\infty-}_{T}L^{\infty}}\|f\|_{L^{\infty}_{T}L^{2}}\nonumber\\
&\leq	 &CT^{1-}N^{-1/2-s+}N^{-3/2-s+}N^{-s+}\leq CN^{-6+s+}\leq CN^{\beta(s)+}.
\end{eqnarray*}
$\bullet$ Linear terms in $w$:
\begin{eqnarray*}
\|wf^{2}\|_{L^{1}_{T}\H^{1}}&\leq& C \| f^{2}H^{1/2}w\|_{L^{1}_{T}L^{2}}+ C\|wf H^{1/2}f\|_{L^{1}_{T}L^{2}} \nonumber \\
&\leq& C T^{1-}\|f\|^{2}_{L^{\infty-}_{T}L^{\infty}}\|w\|_{L^{\infty}_{T}\H^{1}}+   C T^{1-}\|w\|_{L^{\infty}_{T}L^{2}} \|f\|_{L^{\infty-}_{T}L^{\infty}}\|H^{1/2}f\|_{L^{\infty-}_{T}L^{\infty}}\nonumber\\
&\leq	 &C  N^{-6+2s+}\|w\|_{X^{1}_{T}}\leq CN^{0-}\|w\|_{X^{1}_{T}}.
\end{eqnarray*}
Use that  $\|w\|_{L^{4/3+}_{T}L^{\infty-}}\leq CT^{1/2-} \|w\|_{L^{4}_{T}L^{\infty-}}\leq CT^{1/2-}\|w\|_{L^{4}_{T}\W^{1,3}}$ and $X^{1}_{T}\subset L^{4}\big([0,T];\W^{1,3}\big)$
\begin{eqnarray*}
\|wu^{2}\|_{L^{1}_{T}\H^{1}+L^{2}\W^{1,6/5}}&\leq& C \| u^{2}H^{1/2}w\|_{L^{1}_{T}L^{2}}+ C\|wu H^{1/2}u\|_{L^{4/3+}_{T}L^{3/2-}} \nonumber \\
&\leq& C\|u\|^{2}_{L^{4}_{T}L^{6}}\|w\|_{L^{2}_{T}\W^{1,6}}+   C \|w\|_{L^{4/3+}_{T}L^{\infty-}} \|u\|_{L^{\infty}_{T}L^{6}}\|u\|_{L^{\infty}_{T}\H^{1}}\nonumber\\
&\leq	 &CT^{1/2-}\|u\|^{2}_{L^{\infty}_{T}\H^{1}}\|w\|_{X^{1}_{T}}\leq CN^{0-}\|w\|_{X^{1}_{T}}.
\end{eqnarray*}
$\bullet$ The cubic term in $w$: by Sobolev and $X^{1}_{T}\subset L^{4-}\big([0,T];\W^{1,3+}\big)\subset L^{4-}\big([0,T]; L^{\infty}\big) $
\begin{eqnarray*}
\|w^{3}\|_{L^{1}_{T}\H^{1}}&\leq& C \|w^{2}H^{1/2}w\|_{L^{1}_{T}L^{2}}\leq C\|w\|_{L^{\infty}_{T}\H^{1}}\|w\|^{2}_{L^{2}_{T}L^{\infty}} \nonumber\\
&\leq& CT^{1/2-}\|w\|^{3}_{X^{1}_{T}}\leq CN^{-2(1-s)+}\|w\|^{3}_{X^{1}_{T}}.
\end{eqnarray*}
$\bullet$ Quadratic terms in $w$: with similar arguments, we check that  they are controlled by the previous ones.

This completes the proof of \eqref{lundi}. Hence for all $u_{0}\in \Sigma^{1}_{T}$, $L$ has a unique fixed point $w$.\ligne

Let $w\in X^{1}_{T}$ be defined this way, and let us prove that $\|w\|_{X^{0}_{T}}\leq C N^{-9/2+2s+}$, which will imply~\eqref{5.11}. By the Strichartz inequality \eqref{Stri}
\begin{equation*}
\|w\|_{X^{0}_{T}} \leq C \big\||   f +u+ w |^{2} (    f +u+ w ) -|u|^{2}u\big\|_{L^{1}_{T}L^{2}+L^{2}L^{6/5}}.
\end{equation*}
As previously, the main contribution in the source term is 
\begin{equation*}
\|fu^{2}\|_{L^{1}_{T}L^{2}} \leq  T^{1-}\|u\|^{2}_{L^{\infty}_{T}L^{4}}\|f\|_{L^{\infty-}_{T}L^{\infty}} \leq C N^{-4(1-s)+1-s-3/2-s+}=CN^{-9/2+2s+}.
\end{equation*}
For the cubic term we write
\begin{eqnarray*}
\|w^{3}\|_{L^{1}_{T}L^{2}}&\leq& \|w\|_{L^{\infty}_{T}L^{2}}\|w\|^{2}_{L^{2}_{T}L^{\infty}} \nonumber\\
&\leq& CT^{1/2-}\|w\|_{L^{\infty}_{T}L^{2}}\|w\|^{2}_{X^{1}_{T}}\nonumber\\
&\leq& CN^{-2(1-s)+\beta(s)+}\|w\|_{L^{\infty}_{T}L^{2}}\leq CN^{0-}\|w\|_{X^{0}_{T}},
\end{eqnarray*}
which gives a control by the linear term.

The other terms are controlled with similar arguments, and we leave the details to the reader. This finishes the proof of Proposition \ref{prop.fluctu}.
\end{proof}

\begin{lemm}\ph\label{lem.energy}
Under the assumptions of Proposition \ref{prop.fluctu}, for all $u_{0}\in \Sigma^{1}_{T}$ we have 
\begin{equation*}
\big|E\big(\,u^{1}(T)+w^{1}(T)\,\big)-E\big(\,u^{1}(T)\,\big)\big|\leq CN^{1-s+\beta(s)+}.
\end{equation*}
\end{lemm}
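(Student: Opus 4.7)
My plan is to expand $E(u^1+w^1)-E(u^1)$ directly and bound each piece using the bounds from Proposition \ref{prop.fluctu} together with the hypothesis $E(u^1(T))\leq 4N^{2(1-s+\eps)}$. Writing $E(u)=\|u\|^2_{\H^1}+\tfrac12\|u\|^4_{L^4}$, I split
\begin{equation*}
E(u^1+w^1)-E(u^1)=\bigl(2\Re\la u^1,w^1\ra_{\H^1}+\|w^1\|^2_{\H^1}\bigr)+\tfrac12\int_{\R^3}\bigl(|u^1+w^1|^4-|u^1|^4\bigr)\,\dd x,
\end{equation*}
everything being evaluated at $t=T$. The energy hypothesis gives $\|u^1(T)\|_{\H^1}\leq 2N^{1-s+\eps}$ and $\|u^1(T)\|_{L^4}\leq CN^{(1-s+\eps)/2}$, while \eqref{5.10}--\eqref{5.11} yield $\|w^1(T)\|_{\H^1}\leq CN^{\beta(s)+}$ and $\|w^1(T)\|_{L^2}\leq CN^{-9/2+2s+}$.

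For the $\H^1$ contribution, Cauchy-Schwarz yields
\begin{equation*}
\bigl|2\Re\la u^1,w^1\ra_{\H^1}+\|w^1\|^2_{\H^1}\bigr|\leq 2\|u^1\|_{\H^1}\|w^1\|_{\H^1}+\|w^1\|^2_{\H^1}\leq CN^{1-s+\beta(s)+},
\end{equation*}
the cross term being dominant since $\beta(s)\leq 1-s$ on the whole range $0\leq s\leq 1$. For the quartic term I use the pointwise estimate
\begin{equation*}
\bigl||a+b|^4-|a|^4\bigr|\leq C\bigl(|a|^3|b|+|a|^2|b|^2+|a||b|^3+|b|^4\bigr)
\end{equation*}
together with H\"older's inequality, which reduces matters to controlling $\|w^1\|_{L^4}$. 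Interpolating between $L^2$ and $L^6$ and using the Sobolev embedding $\H^1(\R^3)\hookrightarrow L^6(\R^3)$ gives
\begin{equation*}
\|w^1\|_{L^4}\leq C\|w^1\|_{L^2}^{1/4}\|w^1\|_{\H^1}^{3/4}\leq CN^{\frac{-9/2+2s}{4}+\frac{3\beta(s)}{4}+}.
\end{equation*}

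What remains is an exponent bookkeeping, which I expect to be the only delicate part: one must check, in each of the two regimes defining $\beta(s)$, that the dominant quartic contribution $\|u^1\|^3_{L^4}\|w^1\|_{L^4}$ is bounded by $CN^{1-s+\beta(s)+}$, and that the other three terms involving higher powers of $\|w^1\|_{L^4}$ are strictly smaller. A direct computation shows for instance that when $0\leq s\leq 1/2$ the leading term has size $N^{-3/2-s+}=N^{1-s+\beta(s)+}$, and when $1/2\leq s\leq 1$ it has size $N^{-9/4+s/2+}$, strictly smaller than the $\H^1$ contribution $N^{s-5/2+}$. Combining the two bounds yields the announced estimate.
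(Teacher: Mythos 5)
Your proof is correct and follows essentially the same path as the paper: expand $E(u^1+w^1)-E(u^1)$ directly, bound the quadratic part by Cauchy–Schwarz using $\beta(s)\leq 1-s$, and control $\|w^1\|_{L^4}$ via the interpolation $\|w^1\|_{L^4}\lesssim\|w^1\|_{L^2}^{1/4}\|w^1\|_{\H^1}^{3/4}$ (paper's inequality \eqref{w4}), then bookkeep exponents in the two regimes of $\beta(s)$. The exponents you obtain for $\|u^1\|^3_{L^4}\|w^1\|_{L^4}$, namely $N^{-3/2-s+}$ for $s\leq 1/2$ and $N^{-9/4+s/2+}$ for $s\geq 1/2$, match the paper's computation.
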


\begin{proof}
Write $u=u^{1}$ and $w=w^{1}$. A direct expansion and H\"older  give
\begin{multline*}
\big|E\big(\,u(T)+w(T)\,\big)-E\big(\,u(T)\,\big)\big|\leq \\
\leq 2\|u\|_{L^{\infty}_{T}\H^{1}}\|w\|_{L^{\infty}_{T}\H^{1}} +\|w\|^{2}_{L^{\infty}_{T}\H^{1}}
+C\|w\|_{L^{\infty}_{T}L^{4}}\|u\|^{3}_{L^{\infty}_{T}L^{4}}+C\|w\|^{4}_{L^{\infty}_{T}L^{4}}.
\end{multline*}
$\bullet$ Since $\beta(s)\leq (1-s)$, we directly have
\begin{equation*}
2\|u\|_{L^{\infty}_{T}\H^{1}}\|w\|_{L^{\infty}_{T}\H^{1}}+\|w\|^{2}_{L^{\infty}_{T}\H^{1}}\leq CN^{1-s+\beta(s)+}.
\end{equation*}
$\bullet$ By Sobolev and Proposition \ref{prop.fluctu}
\begin{equation}\label{w4}
\|w\|_{L^{\infty}_{T}L^{4}}\leq C \|w\|_{L^{\infty}_{T}\H^{3/4}}\leq     C\|w\|^{1/4}_{L^{\infty}_{T}L^{2}}\|w\|^{3/4}_{L^{\infty}_{T}\H^{1}}\leq C N^{\eta(s)+},
\end{equation}
with $\eta(s)=\max(-3+s/2,-15/4+2s)\leq (1-s+\beta(s))/3$. Hence
\begin{equation*}
\|w\|^{3}_{L^{\infty}_{T}L^{4}}\leq CN^{1-s+\beta(s)+}.
\end{equation*}
$\bullet$ From the  bounds    $\|u\|_{L^{\infty}_{T}L^{4}}\leq CN^{(1-s)/2}$ and \eqref{w4}, we infer
\begin{equation*}
\|w\|_{L^{\infty}_{T}L^{4}}\|u\|^{3}_{L^{\infty}_{T}L^{4}}\leq CN^{\delta(s)+},
\end{equation*}
where $\delta(s)=\max(-3+s/2,-15/4+2s)\leq 1-s+\beta(s)$ (with equality when $0<s\leq 1/2$).

This completes the proof.
\end{proof}

With the results of Proposition \ref{prop.fluctu} and Lemma \ref{lem.energy},  we are able to iterate the argument. At time $t=T$, write $u=u^{2}+v^{2}$ where 
$u^{2}$ is solution to 
 \begin{equation} \label{NLS.approx2}  
  \left\{
      \begin{aligned}
         & i \frac{ \partial u^{2} }{ \partial t } - H u^{2} =   |u^{2}|^{2} u^{2}, \quad (t,x)\in \R\times \R^{3},
       \\  &  u^{2}(T)  =u^{1}(T)+w^{1}(T)\in \H^{1}(\R^{3}),
      \end{aligned}
    \right.
\end{equation}
and where $v^{2}=\e^{-itH}u^{N}_{0}+w^{2}$ satisfies
 \begin{equation*}  
  \left\{
      \begin{aligned}
         & i \frac{ \partial w^{2} }{ \partial t } - H w^{2} =   |w^{2}+\e^{-itH}u^{N}_{0}+u^{2}|^{2} \big(w^{2}+\e^{-itH}u^{N}_{0}+u^{2}\big)-|u^{2}|^{2} u^{2}, \quad (t,x)\in \R\times \R^{3},
       \\  &  w^{2}(T)  =0.
      \end{aligned}
    \right.
\end{equation*}
By Proposition \ref{prop.fluctu}, $w^{1}(T)\in \H^{1}(\R^{3})$, thus \eqref{NLS.approx2} is globally well-posed. Then, thanks to  Lemma~\ref{lem.energy}, by the conservation of the energy 
\begin{equation*}
E(u^{2})=E(u^{1}(T)+w^{1}(T))\leq 4N^{2(1-s+\eps)},
\end{equation*}
and by the conservation of the mass
\begin{equation*}
\|u^{2}\|_{L^{\infty}_{T}L^{2}}=\|u^{1}(T)+w^{1}(T)\|_{L^{2}}\leq 2N^{\eps}.
\end{equation*}
Therefore there exists a set $\Sigma^{2}_{T}\subset \H^{s}$ with 
$$\mu(\Sigma^{2}_{T})\geq 1-C\exp\big(-cT^{-\delta}\|\gamma\|^{-2}_{\H^{s}}\big),$$
and so that for all $u_{0}\in \Sigma^{2}_{T}$, there exists a unique $w^{2}\in \mathcal{C}\big([T,2T],\H^{1}(\R^{3})\big)$ which satisfies the result of   Proposition \ref{prop.fluctu}, with the same $T>0$. Here we use crucially that the large deviation bounds of Proposition \ref{regularite} are invariant under time shift $\tau$.\ligne

{\bf Iteration of the argument:} Fix a time $A>0$. We can iterate the previous argument and construct $u^{j}$, $v^{j}$ and $w^{j}$ for $1\leq j\leq \lfloor A/T\rfloor$ so that \vspace{5pt}

\begin{itemize}
\item[$\bullet$] The function $u^{j}$ is solution to $\eqref{NLS.approx2}$ with initial condition 
$$u^{j}(t=(j-1)T)=u^{j-1}((j-1)T)+w^{j-1}((j-1)T)\;;$$
\item[$\bullet$]  We set $v^{j}(t)=\e^{-itH}u^{N}_{0}+w^{j}(t)$ where the function $w^{j}$ is solution to 
 \begin{equation*}   
  \left\{
      \begin{aligned}
         & i \frac{ \partial w^{j} }{ \partial t } - H w^{j} =   |w^{j}+\e^{-itH}u^{N}_{0}+u^{j}|^{2} \big(w^{j}+\e^{-itH}u^{N}_{0}+u^{j}\big)-|u^{j}|^{2} u^{j}, \quad (t,x)\in \R\times \R^{3},
       \\  &  w^{j}((j-1)T)  =0.
      \end{aligned}
    \right.
\end{equation*}
\end{itemize}
This enables to define a unique solution $u$ to the initial problem \eqref{NLS3D} defined by $u(t)=u^{j}(t)+v^{j}(t)$ for $t\in[(j-1)T,jT]$, with $1\leq j\leq \lfloor A/T\rfloor$ provided that $\dis u_{0}\in \Gamma^{A}_{T}$, where 
\begin{equation*}
\Gamma^{A}_T:= \bigcap_{j=1}^{\lfloor A/T\rfloor} \Sigma^{j}_{T}.
\end{equation*}
Thanks to the exponential bounds, we have $\mu\big((\Gamma^{A}_{T})^{c}\big)\leq C\exp\big(-cT^{-\delta/2}\|\gamma\|^{-2}_{\H^{s}}\big)$, with $T=N^{-4(1-s)-\eps}$. \medskip

{\bf Uniform bounds on the energy and the mass:}  It remains to check whether $E(u^{j})\leq 4N^{2(1-s+\eps)}$ and $\|u^{j}\|_{L^{2}(\R^{3})}\leq 2N^{\eps}$ for all $1\leq j\leq \lfloor A/T\rfloor$. By Lemma \ref{lem.energy}, for $T=N^{-4(1-s)-}$
\begin{equation}\label{borne}
E(u^{j})\leq E(u_{0,N})+CAT^{-1}N^{1-s+\b(s)+} \leq 2N^{2(1-s+\eps)}+CAN^{\b(s)+5(1-s)+},
\end{equation}
which satisfies the prescribed bound iff \,$3(1-s)+\beta(s)<0$.  
\\[5pt]
$\bullet$ Let $1/2\leq s\leq 1$, then the condition is
$$3(1-s)+2s-\frac72<0\quad \text{iff}\quad s>-\frac12,$$
which is satisfied. \\
$\bullet$ Let $0\leq s\leq 1/2$, then the condition is
$$3(1-s)-\frac52<0\quad \text{iff}\quad s>\frac{1}{6}.$$
The same argument applies to control $\|u^{j}\|_{L^{2}}$.\medskip

{\bf Optimisation on $\boldsymbol {N\geq1}$:} If $1/6<s<1$, we optimise in \eqref{borne} with the choice $N$ so that  $A\sim cN^{-3(1-s)-\beta(s)}$, and get that for $1\leq j\leq \lfloor A/T\rfloor$
\begin{equation*}
E(u^{j})\leq CA^{c_{s}+},
\end{equation*}
with 
 \begin{equation}\label{defcs} 
c_{s}=\left\{\begin{array}{ll} 
\dis 2(1-s)/(6s-1) ,\quad &\text{if} \quad 1/6< s\leq 1/2, \\[6pt]  
\dis {2(1-s)}/{(2s+1)},  &\text{if} \quad 1/2\leq s\leq 1.
\end{array} \right.
\end{equation}
Denote by $\Gamma^{A}=\Gamma^{A}_{T}$ the set defined with the previous choice of $N$ and  $T=N^{-4(1-s)-\eps}$.  
\begin{lemm}\ph\label{lem.control}
Let $1/6<s<1$. Then for all $A\in \N$ and all $u_{0}\in \Gamma^{A}$, there exists a unique solution to \eqref{NLS3D} on $[0,A]$ which reads 
\begin{equation*}
u(t)=\e^{-itH}u_{0}+w(t),\quad w\in \mathcal{C}\big([0,A],\H^{1}(\R^{3})\big),
\end{equation*}
and so that  
\begin{equation*}
\sup_{t\in [0,A]}E\big(w(t)\big)\leq CA^{c_{s}+}.
\end{equation*}
\end{lemm}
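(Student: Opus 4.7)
The plan is to recast the iteration sketched in the paragraphs preceding the statement as a clean claim on the fixed window $[0,A]$. Since all the analytic work has been carried out (Proposition~\ref{prop.fluctu}, Lemma~\ref{lem.energy}, the energy–mass propagation argument, the optimisation in $N$), the task here is mostly bookkeeping: assemble the pieces on each subinterval, check that the fluctuation $w$ is globally continuous, and collect the bound on $E(w(t))$.

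First I would fix $A\in\N$, choose $N$ so that $A\sim N^{\alpha}$ with $\alpha=-3(1-s)-\beta(s)>0$ (possible precisely because $s>1/6$), set $T=N^{-4(1-s)-\eps}$, and take $u_{0}\in \Gamma^{A}=\bigcap_{j=1}^{\lfloor A/T\rfloor}\Sigma^{j}_{T}$. On each subinterval $I_{j}=[(j-1)T,jT]$ I would define $u^{j}$, $w^{j}$, $v^{j}=\e^{-itH}u_{0}^{N}+w^{j}$ exactly as in the iterative construction and glue $u(t)=u^{j}(t)+v^{j}(t)$ for $t\in I_{j}$. The existence and uniqueness of $u^{j}$ on each $I_{j}$ is classical ($\H^{1}$-subcritical equation), and Proposition~\ref{prop.fluctu} gives $w^{j}\in\mathcal{C}(I_{j},\H^{1}(\R^{3}))$ under the hypothesis $E(u^{j})\leq 4N^{2(1-s+\eps)}$, $\|u^{j}\|_{L^{2}}\leq 2N^{\eps}$. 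These hypotheses are propagated from $j$ to $j+1$ because the initial datum for $u^{j+1}$ is $u^{j}(jT)+w^{j}(jT)$: the mass is controlled by $\|w^{j}\|_{L^{\infty}_{T}L^{2}}\leq CN^{-9/2+2s+}$ and the energy by Lemma~\ref{lem.energy} together with the optimisation $AN^{3(1-s)+\beta(s)}\lesssim 1$ that was already carried out above.

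Next I would verify that $w(t):=u(t)-\e^{-itH}u_{0}$ is in $\mathcal{C}([0,A],\H^{1}(\R^{3}))$. Using the decomposition $u_{0}=u_{0,N}+u_{0}^{N}$ and the definition of $v^{j}$, one has on $I_{j}$ the identity
\begin{equation*}
w(t)=u^{j}(t)-\e^{-itH}u_{0,N}+w^{j}(t),
\end{equation*}
which is continuous inside each $I_{j}$. Continuity at a junction $t=jT$ reduces to the identity $u^{j+1}(jT)=u^{j}(jT)+w^{j}(jT)$ and the initial condition $w^{j+1}(jT)=0$: both sides equal $u^{j}(jT)+w^{j}(jT)-\e^{-ijTH}u_{0,N}$. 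Uniqueness on $[0,A]$ follows because on each $I_{j}$ any other decomposition of the solution into smooth and rough parts must coincide with $u^{j}+v^{j}$ thanks to the fixed point uniqueness in Proposition~\ref{prop.fluctu}.

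Finally I would estimate $E(w(t))$. By the triangle inequality, the conservation of the $\H^{1}$ norm under $\e^{-itH}$, the bound $\|u_{0,N}\|_{\H^{1}}\leq N^{1-s+\eps}$ from \eqref{hf}, the energy bound $\|u^{j}\|_{\H^{1}}^{2}\leq E(u^{j})\leq 4N^{2(1-s+\eps)}$, and \eqref{5.10}, one gets $\|w(t)\|_{\H^{1}}\leq CN^{1-s+}$. The $L^{4}$ part of the energy is then controlled by Gagliardo--Nirenberg in dimension three, $\|w\|_{L^{4}}^{4}\leq C\|w\|_{L^{2}}\|w\|_{\H^{1}}^{3}$, together with the small bound on $\|w\|_{L^{2}}$ deduced from \eqref{5.11} and $\|u_{0}^{N}\|_{L^{2}}\leq N^{-s}\|u_{0}\|_{\H^{s}}$. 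Inserting $N\sim A^{1/\alpha}$ yields $E(w(t))\leq CA^{c_{s}+}$ with $c_{s}$ as in \eqref{defcs}. The only mildly subtle point is making sure that the powers of $N^{\eps}$ arising from Proposition~\ref{regularite} do not accumulate across the $\lfloor A/T\rfloor$ subintervals; but since the number of steps is itself only polynomial in $N$, absorbing these factors into the ``$+$'' in the exponent $c_{s}+$ is harmless.
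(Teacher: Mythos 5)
Your proposal takes essentially the same route as the paper. The decomposition $w(t)=u^{j}(t)-\e^{-itH}u_{0,N}+w^{j}(t)$ on $I_{j}$ is exactly the paper's $w=z^{j}+w^{j}$ with $z^{j}=u^{j}-\e^{-itH}u_{0,N}$, the continuity across junctions is checked by the same matching condition $u^{j+1}(jT)=u^{j}(jT)+w^{j}(jT)$ and $w^{j+1}(jT)=0$, and the energy estimate is the paper's inequality $E(w(t))\leq CE(z^{j})+CE(w^{j})\leq CE(u^{j})+CE(\e^{-itH}u_{0,N})+CE(w^{j})$ written out termwise (the $\H^{1}$ piece by the triangle inequality, the $L^{4}$ piece by Gagliardo--Nirenberg). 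Your extra remark about non-accumulation of the $N^{\eps}$ factors is also in line with the paper's ``$+$''-convention.

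One small slip: to bound $\|w\|_{L^{2}}$ on $I_{j}$ the relevant contributions are $\|u^{j}\|_{L^{2}}$ and $\|u_{0,N}\|_{L^{2}}$, both of size $O(N^{\eps})$ by mass conservation and the definition of $F_{s}(N^{\eps})$, plus $\|w^{j}\|_{L^{2}}$ from \eqref{5.11}; the quantity $\|u_{0}^{N}\|_{L^{2}}\leq N^{-s}\|u_{0}\|_{\H^{s}}$ you invoke does not enter, since $u_{0}^{N}$ has been subtracted out of $w$. In particular $\|w\|_{L^{2}}$ is of size $N^{0+}$, not genuinely small; this is still enough for the Gagliardo--Nirenberg step and the final bound $E(w(t))\leq CA^{c_{s}+}$, so the conclusion is unaffected.
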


\begin{proof}
On the time interval $[(j-1)T,jT]$ we have $u=u^{j}+v^{j}$ where $v^{j}=\e^{-itH}u^{N}_{0}+w^{j}$ and $u^{j}=\e^{-itH}u_{0,N}+z^{j}$, for some $z_{j}\in \mathcal{C}\big([0,+\infty[,\H^{1}(\R^{3})\big)$. Therefore, if we define $w\in \mathcal{C}\big([0,A],\H^{1}(\R^{3})\big)$ by $w(t)=z^{j}(t)+w^{j}(t)$ for $t\in [(j-1)T,jT]$ and $1\leq j\leq \lfloor A/T\rfloor$ we get $u(t)=\e^{-itH}u_{0}+w(t)$ for all $t\in [0,A]$. Next, for $t\in [(j-1)T,jT]$
\begin{equation*}
E(w(t))\leq CE(z^{j})+CE(w^{j})\leq  CE(u^{j})+CE(\e^{-itH}u_{0,N})+CE(w^{j})\leq C A^{c_{s}+},
\end{equation*}
which was the claim.
\end{proof}

We are now able to complete the proof of Theorem \ref{theo3D}. Set 
$$  \Theta =\bigcap_{k=1}^{+\infty} \bigcup_{A\geq k} \Gamma^{A} \;\;\mbox{ and } \;\;\Sigma = \Theta + \H^1.$$

$ \bullet $ We have $  \mu(\Theta)= \underset{ k \rightarrow \infty}{\lim}  \mu \Big( \underset{A\geq k}{\bigcup} \Gamma^{A} \Big) $ and $ \mu \Big( \underset{A\geq k}{\bigcup} \Gamma^{A} \Big) \geq 1-c\exp(-k^{\delta}\|\gamma\|^{-2}_{\H^{s}})$. So $ \mu( \Theta ) = 1 $, then $ \mu ( \Sigma ) =1 $.

$ \bullet $ By definition, for all $u_{0}\in \Theta $, there exists a unique global solution to \eqref{NLS3D} which reads
\begin{equation*}
u(t)=\e^{-itH}u_{0}+w(t),\quad w\in \mathcal{C}\big(\R,\H^{1}(\R^{3})\big).
\end{equation*}
Then by Lemma~\ref{lem.control} for all $u_{0}\in \Theta $, there exists a unique $w\in \mathcal{C}\big([0,+\infty[,\H^{1}(\R^{3})\big)$ which satisfies for all $N$ the bound
\begin{equation*}
\sup_{t\in [0,N]}E\big(w(t)\big)\leq CN^{c_{s}+}.
\end{equation*}
Now, if $ U_0 \in \Sigma $ then $ U_0 = u_0 + v  $ with $ u_0 \in \Theta $, $ v \in \H^1 $ and we can use the method of Proposition~\ref{prop.fluctu}, Lemma \ref{lem.energy} and Lemma \ref{lem.control} with $ U_{0,N} $ replaced by $ u_{0,N}+ v $. And the set $ \Sigma $ checks properties $ (i) $ and~$(ii)$.

$ \bullet $ Coming back to the definition of $ \Sigma_T^j $, we have for all $ t \in  \R $, $ \e^{-itH}(\Sigma_T^j) = \Sigma_T^j $ then $ \e^{-itH}(\Theta) = \Theta $. Finally, thanks to the property $ (i) $, the set $ \Sigma $ is invariant under the dynamics and the property $ (iii) $ is satisfied.

%%%%%%%%%%%%%%%%%%%%%
%%%%%%%%%%%%%%%%%%%%%%
\subsection{The case of  dimension $\boldsymbol {d=2}$}
 In this section, we prove the Theorem \ref{theo2D}. The proof is analogous to Theorem \ref{theo3D} in a simpler context, that is why, we only explain the key estimates.

$ \bullet $ According to Proposition \ref{regularite}, we set
 \begin{equation*}
F_{s}(K)=\big\{ w\in \H^{s}(\R^{2})\,:\;\;   \|w\|_{  \H^{s}(\R^2) }   \leq K , \quad   \|w\|_{  L^{4}(\R^2) }\leq K   \;\;\text{and}\;\;   \|  \e^{-itH} w\|_{ L_{[0,2\pi]}^{1/\eps} \W^{  1+s-\eps ,\infty   } (\R^2)  }\leq K \; \big\},
\end{equation*}
and we fix $ u_0  \in F_{s}(N^\eps)$. 
\\Then, if $ f = \e^{-itH} u_0^N $, we have  
\begin{equation*}
\|f\|_{L^{\infty}_{[0,2\pi]}L^{2}}\leq CN^{-s+\eps},\quad \|H^{\s/2}f\|_{L^{\infty-}_{ [0,2\pi]},L^{\infty}}\leq CN^{\s-1-s+\eps}.
\end{equation*}
 
$ \bullet $ In  Proposition \ref{prop.fluctu}, we can choose $ T = N^{-2(1-s)-\epsilon} $ to have
\begin{equation*} 
\|u^1\|_{L^{\infty}_{T}L^{2}}\leq CN^{\eps}  \quad \mbox{ and }\quad \|u^1\|_{L^{\infty}_{T}\H^{1}}\leq CN^{1-s+\eps}.
\end{equation*} 
Moreover, as $ u_0 \in F_{s}(N^\eps)$, we obtain
\begin{equation*}
\|u_{0,N}\|_{L^{4}}\leq CN^{\eps}.
\end{equation*}
Hence, we establish
\begin{align*}
E(u^1) & = \|u^1\|^2_{\H^1(\R^2)} + \frac{1}{2} \|u^1\| _{L^4(\R^2)}^4 = \|u_{0,N}\|^2_{\H^1(\R^2)} + \frac{1}{2} \|u_{0,N}\| _{L^4(\R^2)}^4
\\ & \leq N^{2(1-s+\eps)} + C  N^{4\eps}  \leq 4 N^{2(1-s+\eps)},
\end{align*}
and
\begin{equation*}
\|u^1\|_{L^{\infty}_{T}L^{4}}\leq C N^{(1-s+\eps)/2}.
\end{equation*}

$ \bullet $ In Proposition \ref{prop.fluctu}, we obtain
\begin{equation*}
\|w^1\|_{ L^\infty_{[0,T]}  \H^1  } \leq C N^{-1+} \quad \mbox{ and }\quad \|w^1\|_{ L^\infty_{[0,T]}  L^2  } \leq C N^{-2+}.
\end{equation*}
The proof is essentially the same: We define the map $L$ as in \eqref{mapL}. For the first estimate,  we prove that
\begin{equation*} 
\|L(w)\|_{X^{1}_{T}}\leq CN^{-1+}+N^{0-}\|w\|_{X^{1}_{T}}+N^{-2(1-s)+}\|w\|^{3}_{X^{1}_{T}}.
\end{equation*}
 We only give details of source terms.
\begin{eqnarray*}
\|fu^{2}\|_{ L^{1+}_{T}\W^{1,2-}   +L^{1}_{T}\H^{1}} & \leq& C \| f uH^{1/2}u\|_{L^{1+}_{T}L^{2-} } + C\|u^{2}H^{1/2}f\|_{L^{1}_{T}L^{2}} \nonumber \\
& \leq& C T^{1-}\|u\|_{L^{\infty}_{T}\H^{1}}\|u\|_{L^{\infty}_{T}L^{\infty-}}\|f\|_{L^{\infty-}_{T}L^{\infty}}+   C T^{1-}\|u\|^{2}_{L^{\infty}_{T}L^{4}} \|H^{1/2}f\|_{L^{\infty-}_{T}L^{\infty}}\nonumber\\
&\leq	 &C  T^{1-} \max( N^{1-3s+} , N^{1-2s+}) \leq C T^{1-} N^{1-2s+} \leq C N^{-1+}.
\end{eqnarray*}
Similarly,
\begin{eqnarray*}
\|f^{2}u\|_{L^{1}_{T}\H^{1}}&\leq& C \|f^{2}H^{1/2}u\|_{L^{1}_{T}L^{2}}+ C\|ufH^{1/2}f\|_{L^{1}_{T}L^{2}} \nonumber \\
&\leq& C T^{1-}\|u\|_{L^{\infty}_{T}\H^{1}}\|f\|^{2}_{L^{\infty-}_{T}L^{\infty}}+   C T^{1-}\|u\|_{L^{\infty}_{T}L^{2}}\|f\|_{L^{\infty-}_{T}L^{\infty}} \|H^{1/2}f\|_{L^{\infty-}_{T}L^{\infty}}\nonumber\\
&\leq	 &C T^{1-} \max( N^{-1-3s+} , N^{-1-2s+}) \leq C T^{1-} N^{-1-2s+} \leq C N^{-3+}\leq CN^{-1+}.
\end{eqnarray*}
Finally,
\begin{eqnarray*}
\|f^{3}\|_{L^{1}_{T}\H^{1}}&\leq& C \|f^{2}H^{1/2}f\|_{L^{1}_{T}L^{2}}  \leq CT^{1-} \|H^{1/2}f\|_{L^{\infty-}_{T}L^{\infty}}\|f\|_{L^{\infty-}_{T}L^{\infty}}\|f\|_{L^{\infty}_{T}L^{2}}\nonumber\\
&\leq	 &CT^{1-}N^{-s+}N^{-1-s+}N^{-s+}\leq CN^{-3-s+}\leq CN^{-1+}.
\end{eqnarray*}
 
$ \bullet $ Analogously to Lemma \ref{lem.energy}, we obtain 
\begin{equation*}
\big|E\big(\,u^{1}(T)+w^{1}(T)\,\big)-E\big(\,u^{1}(T)\,\big)\big|\leq CN^{-s+},
\end{equation*}
 because, here $ \beta(s) = 1- $, and the estimates on $ u^1 $ are the same that in  dimension $d=3$.
 
$ \bullet $ Finally, the globalization argument holds if (\ref{borne}) is satisfied, that is to say
\begin{equation*}
C A T^{-1} N^{-s+} \leq 4 N^{2(1-s)+}, 
\end{equation*}
which is equivalent to $2(1-s)-s < 2(1-s)$,  hence $s>0$. In this case, we set $A\sim cN^{s}$ and we get that for $0\leq t\leq A$ 
\begin{equation*}
E(w(t))\leq C A^{c_{s}+},\quad \text{with}\;\; c_{s}=\frac{1-s}{s}.
\end{equation*}
Theorem \ref{theo2D} follows.

%%%%%%%%%%%%%%%%%%%%
%%%%%%%%%%%%%%%
 
\end{document}